\newtheorem{theorem}{Theorem}[section]
\newtheorem{lemma}[theorem]{Lemma}
\newtheorem{proposition}[theorem]{Proposition}
\newtheorem{definition}[theorem]{Definition}
\newtheorem{corollary}[theorem]{Corollary}
\newtheorem*{theorem*}{Theorem}
\theoremstyle{remark}
\newtheorem{remark}[theorem]{Remark}
\newtheorem{example}[theorem]{Example}
\begin{document}

\title{The category of cellular resolutions}

\author{Laura Jakobsson\footnote{\tt laura.p.jakobsson@aalto.fi} }

\date\today

\maketitle

\begin{abstract}
Cellular resolutions are a well-studied topic on the level of single resolutions and certain specific families of cellular resolutions. One question coming out of the work on families is to understand the structure of cellular resolutions more generally. 
We give a starting point to understanding higher level structures by defining the category of cellular resolutions. In this paper we study the properties of this category. The main results are in lifting homotopy colimits from topology and Morse theory on cellular resolution being compatible with the category. 
\end{abstract}

\section{Introduction}

Cellular resolutions were first introduced by Bayer and Sturmfels in \cite{def} in order to study monomial modules. 
Earlier work of  Bayer, Peeva and Sturmfels \cite{bps} introduced the concept for simplicial cases. 
 Cellular resolutions have turned out to be a strong tool for resolving monomial modules and they are now a standard tool in combinatorial commutative algebra, and thus covered in the book by Miller and Sturmfels \cite{cca}, for example.  The definition of cellular resolutions with cell complexes brings in topology and also gives them a combinatorial nature, so we know very well how to compute them. 
A lot of the literature on cellular resolutions either cover a particular type of monomial ideals, for example Dochtermann and Mohammadi construct cellular resolutions from mapping cones in \cite{DM}, or are very involved with minimality of the resolution. It is known that every monomial module has a non-minimal cellular resolution, but in \cite{vel} Velasco showed that not all of them have a minimal cellular resolution.

Despite all the known facts about cellular resolutions, they have not been studied as a class of objects. There has been a discussion on the general structure of cellular resolutions, see for example \cite{DM} for open question on "moduli space" for a family of cellular resolutions, and even these cases are often focused on the structure of the particular family of cellular resolutions. A natural question would be to ask how do cellular resolutions behave in a more category-theoretic setting. 
This approach is also supported by the existing conversation on higher structrures on cellular resolutions, and that category theory is a fundamental tool in studying these in other fields like algebraic geometry and representation stability. 
In this paper, we define the category of cellular resolutions and study some of its properties.
This is not only interesting in its own right, but also can help us to understand cellular resolutions in general. Studying subcategories opens up a novel way to study specific types of cellular resolutions, and category theoretic constructions give us new ways to build cellular resolutions from the existing ones.

We start by generalizing the definition of cellular resolutions to cases where the cell complexes may not be connected, and then continue by defining what a map between two cellular resolutions is. For this we need the concept of compatible cellular and chain map which says that  ``they both do the same thing".
Our main result, in Definition \ref{DEF} and Theorem \ref{cat}, is the definition of the category of cellular resolutions, {\bf CellRes}, and that it does indeed form a category. 

\begin{theorem*}
{\bf CellRes} with objects being cellular resolutions and their direct sums, and morphisms being pairs $({\bf f},f)$ of compatible chain maps and cellular maps, is a well-defined category.
\end{theorem*}

In Sections 4, 5, 6 and 7 we study the common constructions in {\bf CellRes}, and note other worthwhile observations. These include mapping cones and cylinders, (co)products and (co)limits.
Throughout these sections, we see the repeating pattern of well-behaved constructions for cellular resolutions if topological and  chain complex constructions are essentially the same. Otherwise, they may not even exist in the category {\bf CellRes} in general. 

In Section 8 we turn our attention to homotopy colimits. They are a well known construction in topology, and we show that the explicit construction lifts to {\bf CellRes}. 
\begin{theorem*}
Homotopy colimits lift from topology to cellular resolutions
\end{theorem*}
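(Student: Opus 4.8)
The plan is to fix a diagram $F\colon I\to\mathbf{CellRes}$ over a small category $I$ and realize its homotopy colimit through the explicit bar (simplicial replacement) model, then check that every ingredient of that model is one of the constructions already lifted to $\mathbf{CellRes}$ in Sections 4--7. First I would forget down to the underlying diagram of labeled cell complexes $X_i:=F(i)$ and of their $\mathbb{Z}^n$-graded cellular chain complexes $C_\bullet(X_i)$; because a $\mathbf{CellRes}$-morphism is by definition a compatible pair $(\mathbf f,f)$, the transition maps are simultaneously cellular and label-preserving. On the topological side I form
\[
\operatorname{hocolim} F \;=\; \Big(\coprod_{n\ge 0}\ \coprod_{i_0\to\cdots\to i_n} X_{i_0}\times\Delta^n\Big)/\!\sim ,
\]
the geometric realization of the simplicial replacement, and I label each product cell $\sigma\times\tau$ by the label of $\sigma$, the simplex directions carrying the trivial monomial $1$.

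The first batch of verifications is that this is a genuine object of $\mathbf{CellRes}$. Each $X_{i_0}\times\Delta^n$ is a cell complex, and the face and degeneracy identifications are exactly iterated mapping cylinders taken along the transition maps; since products, coproducts and mapping cylinders have already been shown to lift, the quotient is again a labeled cell complex, the labeling is consistent across the gluings precisely because the transition maps preserve labels, and the bar filtration makes the cellular boundary explicit. I would then identify the cellular chain complex: by Eilenberg--Zilber the chains of $X_{i_0}\times\Delta^n$ are $C_\bullet(X_{i_0})\otimes C_\bullet(\Delta^n)$, so the total complex of the resulting double complex is exactly the algebraic homotopy colimit of $i\mapsto C_\bullet(X_i)$. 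This is the recurring ``the topological and the chain-level construction are essentially the same'' phenomenon, and it simultaneously shows that the canonical maps $X_i\to\operatorname{hocolim}F$ are compatible pairs, hence honest $\mathbf{CellRes}$-morphisms.

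The step I expect to be the real obstacle is showing that the output is a \emph{resolution}, i.e. that the labeled complex $\operatorname{hocolim}F$ satisfies the acyclicity criterion for labeled cell complexes (as in \cite{cca}): for every $b\in\mathbb{Z}^n$ the subcomplex $(\operatorname{hocolim}F)_{\preceq b}$ must be $k$-acyclic. A cell $\sigma\times\tau$ lies in this subcomplex iff $\sigma\in (X_i)_{\preceq b}$, so $(\operatorname{hocolim}F)_{\preceq b}=\operatorname{hocolim}_i\,(X_i)_{\preceq b}$, a homotopy colimit of complexes each of which is already acyclic. Running the homotopy-colimit spectral sequence $E^2_{pq}=H_p\big(I;\,i\mapsto\widetilde H_q((X_i)_{\preceq b})\big)$ kills every row with $q>0$, so the whole question collapses to the ($k$-)acyclicity of the nerve of the subcategory $I'_b=\{i:(X_i)_{\preceq b}\neq\emptyset\}$. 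Thus the construction lands in $\mathbf{CellRes}$ precisely when these nerves are acyclic, equivalently when the higher derived colimits of the resolved modules vanish; this holds for the homotopy colimits one cares about most --- homotopy pushouts and sequential homotopy colimits, whose indexing nerves are contractible --- while a disconnected $I'_b$ is exactly the obstruction in the general case, where the complex instead resolves the derived colimit. Given this, functoriality of the assignment $F\mapsto\operatorname{hocolim}F$ and the universality of the comparison cocone are routine consequences of Theorem \ref{cat}.
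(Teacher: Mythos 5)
Your construction is essentially the paper's second model of the homotopy colimit (Definition \ref{prodhocolim}): the simplicial replacement $\coprod_{i_0\to\cdots\to i_n}X_{i_0}\times\Delta^n$ is the same object as $\bigsqcup_i B(i\downarrow\mathcal{I})\times D^i$, and reducing its cell structure to the already-lifted products with trivially labeled simplices, coproducts, and mapping cylinders is exactly how the paper justifies it. The divergence is in how acyclicity is handled, and there your write-up has two concrete problems. First, the labeling. You assert that the transition maps are ``label-preserving'' and you label a product cell $\sigma\times\tau$ by the label of $\sigma$ alone; but compatibility of $(\mathbf{f},f)$ does not force label preservation (in Example \ref{ex2} the transition map multiplies every label by $z$), and the paper's conventions (Definition \ref{gluedef}, and the proof of Proposition \ref{mapcyl}) give the cylinder cell over $\sigma$ the label $\mathrm{lcm}(m_\sigma,m_{f(\sigma)})$. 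With your convention a face $f(\sigma)$ in the boundary of the cylinder cell can carry a strictly larger degree than the cell itself, so the exponent $\mathbf{a}_F-\mathbf{a}_G$ in the cellular differential acquires negative entries and the object is not a labeled cell complex in the paper's sense at all. Second, for the same reason the identity $(\operatorname{hocolim}F)_{\preceq b}=\operatorname{hocolim}_i\,(X_i)_{\preceq b}$ on which your spectral sequence rests requires the transition maps to restrict to the $\preceq b$ subcomplexes, i.e.\ to be label-non-increasing, which the morphisms of $\mathbf{CellRes}$ are not.

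The paper closes acyclicity by a different and more elementary route: it exhibits the homotopy colimit as an iterated gluing of mapping cylinders along common sub-resolutions and checks kernels against images directly at each gluing step (Lemma \ref{glu}, Propositions \ref{mapcyl} and \ref{glumap}), and only afterwards shows the nerve model of Definition \ref{prodhocolim} agrees with the gluing model of Definition \ref{gluehocolim}. Your closing observation --- that the construction resolves the colimit only when the relevant nerves are acyclic, and that a disconnected or non-acyclic $I'_b$ obstructs it --- is not wrong as mathematics (two genuinely parallel arrows $a\rightrightarrows b$ applied to points produce a circle's worth of unkilled $H_1$), but it means you have proved a conditional statement rather than the theorem, and you never say for which diagrams you are claiming the lift. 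The paper confronts precisely this situation in the proof of Proposition \ref{glumap} and deals with it by adjoining extra free modules in homological degree $n+2$ to kill the spurious cycles; whether the result still deserves to be called the homotopy colimit is a fair question to raise, but that filling step is what your argument is missing if the unconditional statement is the goal.
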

In particular, this gives us a good way to construct  explicit cellular resolutions from known ones. 

In the final Section 9, we focus on discrete Morse theory on cellular resolutions. The interest in minimality has also motivated the application of discrete Morse theory to cellular resolutions in earlier work, and one example is \cite{MAG} where it is shown how to make a resolution closer to a minimal one.  We show that the algebraic Morse theory and the discrete Morse theory for cellular resolutions work well together. Our main result from this section is the following:
\begin{theorem*}
Let $\mathcal{F}$ be a cellular resolution with a cell complex $X$, and let $M$ be a Morse matching on them. 

Let ${\bf f}$ be the chain map from $\mathcal{F}$ to $\tilde{\mathcal{F}}$, and let $f$ be the cellular strong deformation retract of $X$ coming from the Morse theory.

Then the pair $({\bf f},f)$  formed of the Morse maps is a morphisms in {\bf CellRes}.
\end{theorem*}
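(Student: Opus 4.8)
The plan is to unwind the definition of a morphism in {\bf CellRes} and check its requirements directly, using throughout that $\mathbf{f}$ and $f$ are built from one and the same matching $M$. A morphism $(\mathbf{g},g)$ asks for a chain map $\mathbf{g}$, a cellular map $g$, and the compatibility condition that $g$ induce $\mathbf{g}$ on cellular chains once the monomial grading is restored. The first two conditions are immediate from the two halves of Morse theory: that $\mathbf{f}$ is a chain map is precisely the conclusion of algebraic Morse theory, and that $f$ is a cellular map --- indeed a strong deformation retraction onto the Morse complex $X_M$ --- is the conclusion of discrete Morse theory for cell complexes. So the whole content sits in the compatibility condition.

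First I would write down the two maps explicitly. Discrete Morse theory describes $f\colon X\to X_M$ by collapsing matched pairs and sending a cell $\sigma$ to a signed sum of critical cells, the coefficient of a critical cell $\tau$ being a $\pm 1$ count of the gradient paths from $\sigma$ to $\tau$. Algebraic Morse theory, in the form of Sk\"oldberg's reduction, describes $\mathbf{f}$ by the same sum over the same gradient paths, except that each step contributes the corresponding entry of the differential of $\mathcal{F}$ rather than a bare incidence number. Here the hypothesis that $M$ is a Morse matching on the pair $(\mathcal{F},X)$ does real work: it forces every matched incidence to be a unit, equivalently the two matched cells to carry equal monomial labels, which is exactly what makes the algebraic reduction defined and what makes the monomial factors along a gradient path telescope. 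Consequently the coefficient of $\tau$ in $\mathbf{f}(\sigma)$ is the topological $\pm 1$ count multiplied by the single monomial $m_\sigma/m_\tau$.

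The verification of compatibility is then a term-by-term comparison. For a critical cell $\tau$, the coefficient of $\tau$ in $\mathbf{f}(\sigma)$ equals the coefficient of $\tau$ in the cellular-chain map induced by $f$, multiplied by the grading factor $m_\sigma/m_\tau$; but this factor is precisely the one that the cellular chain complex of a resolution attaches to the incidence of $\sigma$ over $\tau$, so $\mathbf{f}$ is exactly the graded lift of $f$. This is the compatibility condition, and hence $(\mathbf{f},f)$ is a morphism. Along the way I would also confirm that $\tilde{\mathcal{F}}$ is itself an object of {\bf CellRes}, that is, a cellular resolution supported on $X_M$; this again uses that $M$ is a Morse matching and is available from the earlier results of this section.

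The main obstacle will be the simultaneous bookkeeping of signs and monomial weights, so that the algebraic and topological gradient-path sums agree on the nose rather than only after forgetting the grading. In particular one must check that the sign conventions in Forman's deformation retraction and in the algebraic reduction coincide, or reconcile them by an explicit sign automorphism; a discrepancy there would not be visible in either map separately, yet would break exactly the compatibility that is the point of the statement.
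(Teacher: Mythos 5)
Your proposal is correct in outline but takes a genuinely different route from the paper. The paper reduces at once to the case where $M$ is a single edge (one elementary collapse) and then iterates; for that single collapse it never invokes the gradient-path formulas at all. Instead it observes that ${\bf f}$ is the identity on all unmatched generators, pins down $f_i(e_i)$ for the matched $i$-cell purely from commutativity of the chain-map squares (writing $d(e_i)=d(a_1y_1+\cdots+a_ry_r)$ over the unmatched facets $y_j$ of the matched $(i{+}1)$-cell and concluding $f_i(e_i)=a_1y_1+\cdots+a_ry_r$, and similarly $f_{i+1}(e_{i+1})=0$), and then matches this against the fact that the deformation retraction pushes $x_{i+1}$ and $x_i$ into $\partial x_{i+1}\cap\tilde X$. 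Your global comparison of Forman's gradient-path count with Sk\"oldberg's weighted gradient-path sum would also work, and it buys an explicit closed-form description of ${\bf f}$; but note that it proves more than the paper's definition of compatibility demands. Compatibility in {\bf CellRes} only requires that $f_i$ send $e_x$ to \emph{some} $S$-linear combination of the generators $e_{y_1},\dots,e_{y_r}$ exactly when the cellular map sends $x$ onto $y_1\cup\cdots\cup y_r$; the precise coefficients, and hence the sign reconciliation you flag as the main obstacle, are not needed. The one caveat your route inherits that the paper's sidesteps is cancellation: if gradient paths cancel so that a critical cell drops out of the algebraic image while remaining in the topological image, the support comparison needs care, whereas the elementary-collapse induction keeps each step small enough that this does not arise.
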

This result shows that Morse maps are well behaved with respect to algebra and topology on cellular resolutions. Furthermore, the results on Morse theory gives a basis for simple homotopy theory for cellular resolutions. As the last result we define a simple homotopy equivalence of cellular resolutions. 

This work serves as stepping stone for further questions of categorical nature. In particular, it opens up  cellular resolutions to representation stability in the sense of Sam and Snowden \cite{SS}, and this was one of our main motivations for writing this paper. We would like to apply the representation stability results presented in the work of Sam and Snowden to {\bf CellRes}, and for this we need to have the cellular resolutions as a category. In particular, this includes the results on noetherianity and finite generation of representations,  for example Theorem 1.1.3 of \cite{SS}, applied to representations of cellular resolutions. The full details of the representation stability aspects of cellular resolutions will be made available shortly in our paper that is currently in progress. 

\subsection*{Acknowledgements}
I would like to thank Alexander Engstr\"om for many helpful discussions and guidance.

\section{Background}
In this section we review the existing tools and definitions that we will need later. 
\subsection{Category theory}
\label{catsection}
There are many good references for introductory category theory, and our main references are  \cite{Bor1}, \cite{Bor2} and \cite{ml}. 

One of the most important definitions that we need from category theory is the definition of a (locally small) category itself.
\begin{definition}
 A (locally small) \emph{category} $\mathcal{C}$ consists of a class of objects $\operatorname{obj}(\mathcal{C})$ and a set of morphisms $\mathcal{C}(a,b)$ for each pair of objects $a,b$. For any triple $a,b,c$ we have composition map of the morphisms $\mathcal{C}(b,c)\times \mathcal{C}(a,b)\rightarrow \mathcal{C}(a,c)$, with the image of the pair $(\phi,\psi)$ denoted by $\phi\circ\psi$.
The category $\mathcal{C}$ must satisfy the following two conditions:

\begin{itemize}
\item[1.] For any object $a\in obj(\mathcal{C})$ there exists an identity morphism $id_a\in \mathcal{C}(a,a)$ such that $id_a\circ\phi=\phi$ and $\psi\circ id_a=\psi$.
\item[2.] Composition of morphisms is associative, that is $(\phi\circ\psi)\circ\chi=\phi\circ(\psi\circ\chi)$ for all $\psi$ and $\phi$.
\end{itemize}
We also require that the morphism sets $\mathcal{C}(a,b)$ and $\mathcal{C}(c,d)$ are disjoint unless $a=c$ and $b=d$. 

We say that a category $\mathcal{C}$ is \emph{small} if the objects and morphisms form a set.
\end{definition}

Common examples of categories include {\bf Set}: where the objects are sets and the morphisms are set maps, {\bf Top}: objects are topological spaces and morphisms are continuous maps, and ${\bf Mod}_R$: objects are modules over the ring $R$ and morphisms are module homomorphims.

Subcategories of the category of cellular resolutions are briefly discussed in the Section \ref{subcat}, for that we have the defintion of a subcategory below.
\begin{definition}
A \emph{subcategory} $\mathcal{C}'$ of $\mathcal{C}$ is a category where $\operatorname{obj}(\mathcal{C}')\subseteq\operatorname{obj}(\mathcal{C})$ and morphisms of $\mathcal{C}$ such that the source, target, and composition are the same as in $\mathcal{C}$.
A subcategory $\mathcal{C}'$ of $\mathcal{C}$ is \emph{full} if $\mathcal{C}'(a,b)=\mathcal{C}(a,b)$ for any pair $a,b\in obj(\mathcal{C}')$.
\end{definition}

Next we define some common category theory concepts that are needed to study the basic properties of the category of cellular resolutions.
\begin{definition}
An object $a\in\mathcal{C}$ is said to be an \emph{initial object} if for all objects $b\in obj(\mathcal{C})$ there is a single morphism $a\rightarrow b$. Similarly we say that $a$ is a \emph{final object} if there is a unique morphism $b\rightarrow a$ for all $b\in obj(\mathcal{C})$. If the initial and final objects exists, they are unique up to an isomorphism.
\end{definition}

The product and coproduct constructions play a significant role for the properties that the category of cellular resolutions has and thus we state them here. For examples of product and coproduct we have them for the categories {\bf Top} and  $\mathcal{C}_{\bullet}(\bf{Mod}_S)$ in Section \ref{topcc}.
\begin{definition}
A \emph{product} of two objects $A,B$ in the category $\mathcal{C}$ is an object $A\times B$ such that there exist morphisms $f:A\times B\rightarrow A$ and $g:A\times B\rightarrow B$ and they satisfy that for any object $Z$ mapping both to $A$ and $B$ there exists a unique morphism $Z\rightarrow A\times B$ that makes the product diagram commute.
\end{definition}
\begin{definition}
A \emph{coproduct} of two objects $A,B$ in the category $\mathcal{C}$ is an object $A\sqcup B$ in $\mathcal{C}$ such that there exist morphisms $f:A\rightarrow A\sqcup B$ and $g:B\rightarrow A\sqcup B$, and they satisfy that for any object $Z$ where both $A$ and $B$ map to, there exists a unique morphism $A\sqcup B\rightarrow Z$ that makes the product diagram commute.
\end{definition}
The product and coproduct diagrams are shown in Figure \ref{prodcoprod}.
If a product or a coproduct exist, then they are unique up to unique isomorphism.
\begin{figure}
\begin{center}
\includegraphics[scale=1]{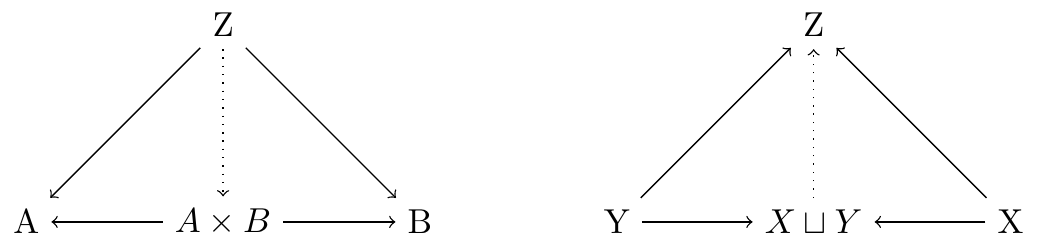}
\end{center}
\caption{The diagrams for product and coproduct in a category $\mathcal{C}$.}
\label{prodcoprod}
\end{figure}

One of the most fundamental definitions in category theory is the definition of a functor and we give this below.
\begin{definition}
A map $F$ between two categories $\mathcal{C}$ and $\mathcal{D}$ is called a \emph{(covariant) functor} and consists of a map $F:obj(\mathcal{C})\rightarrow obj(\mathcal{D})$, and for all pairs $a,b\in obj(\mathcal{C})$ there is a map $F: \mathcal{C}(a,b)\rightarrow \mathcal{C}(F(a),F(b))$.
The functor $F$ must also satisfy $F(\phi\circ\psi)=F(\phi)\circ F(\psi)$ and $F(id_a)=id_{F(a)}$.

A \emph{contravariant functor} is functor that has  a map $F: \mathcal{C}(a,b)\rightarrow \mathcal{C}(F(b),F(a))$ for all pairs $a,b\in obj(\mathcal{C})$ .
\end{definition}

\begin{definition}
 A \emph{natural transformation} $\eta$  between two functors $F,G:\mathcal{C}\rightarrow\mathcal{D}$ is a collection of maps $\{\eta_a:F(a)\rightarrow G(a)\}_{a\in obj(\mathcal{C})}$ in $\mathcal{D}$ such that the diagram
$$\begin{array}{ccc}
	F(a)&\xrightarrow{\eta_a} & G(a)\\
	
	\downarrow &&\downarrow \\

	F(b)&\xrightarrow{\eta_b}& G(b)
\end{array}$$
commutes for any morphisms $\phi:a\rightarrow b$ in $\mathcal{C}$.  The functors $F$ and $G$ are said to be isomorphic if $\eta_a$ is an isomorphism for all $a$, and $\eta$ is called a \emph{natural isomorphism}.
\end{definition}

The next few definitions cover the limits and colimits in the category setting. First we define what is a diagram in a category and then proceed to state the definitions of limit and colimit. 

\begin{definition}
A \emph{diagram} $D$ in a category $\mathcal{C}$ is a covariant functor $F:I\rightarrow\mathcal{C}$ where $I$ is a small category. $F_i$ denotes the image of $i\in obj(I)$, and for any $\phi:i\rightarrow i'$ we have  a map $F(\phi):F_i\rightarrow  F_{i'}$.
\end{definition}

\begin{definition}
A \emph{limit} of the diagram $D$ is an object $\mathrm{lim }\ M$ with maps $f_i:\mathrm{lim}\ M\rightarrow M_i$, satisfying $f_i= M(\phi)\circ f_j$ for all $\phi:i\rightarrow j$ in $\mathcal{I}$, and for any $W\in obj(\mathcal{C})$ and any family of maps $t_i:W\rightarrow M_i$ such that $t_i= M(\phi)\circ t_j$ for all $\phi:i\rightarrow j$ in $\mathcal{I}$, there exists a morphism $t: W\rightarrow\mathrm{lim}\ M$ such that $t_i=f_i\circ t$ for any object $i\in\mathcal{I}$.
\end{definition}

\begin{definition}
A \emph{colimit} of a diagram $D$ in $\mathcal{C}$ is an object $\mathrm{colim}\ M$ in $\mathcal{C}$ with a map $\iota_i: M_i\rightarrow \mathrm{colim}\ M$. The colimit must satisfy $\iota_i=\iota_j\circ M(\phi)$ for all $\phi:i\rightarrow j$ in $\mathcal{I}$, and for any $W\in obj(\mathcal{C})$ and any family of maps $t_i:M_i\rightarrow W$ such that $t_i=t_j\circ M(\phi)$ for all $\phi:i\rightarrow j$ in $\mathcal{I}$, there exists a morphism $t:\mathrm{colim}\ M\rightarrow W$ such that $t_i=t\circ \iota_i$ for any object $i\in\mathcal{I}$.
\end{definition}
If limits and colimits exist, they are unique up to isomorphism. A common examples of the two include $p$-adic numbers for the colimit and products for the limit.

The next definition is useful for the tensor product that we have for cellular resolutions, see Section \ref{tprod}.
\begin{definition}
We say that $\mathcal{C}$ is a \emph{monoidal category} if it has a bifunctor $\otimes:\mathcal{C}\times\mathcal{C}\rightarrow\mathcal{C}$, an object $e$, a natural isomorphism $\alpha:(-\otimes -)\otimes-\rightarrow -\otimes(-\otimes -)$, and natural isomorphisms $\lambda:(e\otimes-)\rightarrow-$ and $\rho:(-\otimes e)\rightarrow -$, such that they satisfy the triangle equality
$$\rho_x\otimes 1_y(x,e,y)=(1_x\otimes \lambda_y)\circ \alpha(x,e,y)$$
and the pentagon identity
$$\alpha\otimes 1\circ\alpha\circ 1\otimes\alpha(x,y,z,w)=\alpha\circ\alpha(x,y,z,w).$$
\end{definition}

The remaining definitions in this section are needed for homotopy colimits and simplicial set enrichment for later on.

\begin{definition}
Let $\mathcal{C}$ be a category. Then the \emph{opposite category} $\mathcal{C}^{op}$ is the category with the objects of $\mathcal{C}$ and morphism $b\rightarrow a$ for every $a\rightarrow b\in \mathcal{C}$.
\end{definition}

\begin{definition}
\label{enrich}
Let $\mathcal{V}$ be a monoidal category. Then a category \emph{$\mathcal{C}$ enriched with $\mathcal{V}$} is the category with objects $\textrm{obj}(\mathcal{C})$, and for every pair of objects an object $\mathcal{C}(a,b)\in\mathcal{V}$. We also have that for any triple $a,b,c\in\mathcal{C}$, we have the composition $\mathcal{C}(a,b)\otimes\mathcal{C}(b,c)\rightarrow \mathcal{C}(a,c)$. Finally, the following diagrams must commute for the given data.

$$\begin{array}{ccc}
(\mathcal{C}(a,b)\otimes\mathcal{C}(b,c))\otimes\mathcal{C}(c,d)&\longrightarrow & \mathcal{C}(a,c)\otimes \mathcal{C}(c,d)\\
\downarrow& & \\
\mathcal{C}(a,b)\otimes(\mathcal{C}(b,c)\otimes\mathcal{C}(c,d))& &\downarrow\\
\downarrow& & \\
\mathcal{C}(a,b)\otimes \mathcal{C}(b,d)&\longrightarrow & \mathcal{C}(a,d)
\end{array}$$
and 
$$\begin{array}{ccccc}
I\otimes\mathcal{C}(a,b)&\rightarrow& \mathcal{C}(a,b)&\leftarrow& \mathcal{C}(a,b)\otimes I\\
\downarrow& &\downarrow& &\downarrow\\
\mathcal{C}(a,a)\otimes \mathcal{C}(a,b)&\rightarrow & \mathcal{C}(a,b)&\leftarrow &\mathcal{C}(a,b)\otimes\mathcal{C}(b,b)
\end{array}.$$
\end{definition}

An example of the enriched category is {\bf Top} with simplicial sets (defined below). This is important example as the category of cellular resolutions inherits the structure (Proposition \ref{enri}). 
\begin{definition}
Let $\Delta$ be the category of finite sets $[n]=\{0,1,\ldots,n\}$ as the objects and order preserving functions a morphisms.
A \emph{simplicial set} is a contravariant functor $X:\Delta\rightarrow {\bf Set}$. The category of simplicial sets is denoted by {\bf sSet}.
\end{definition}

\begin{definition}
Let $\mathcal{V}$ be a closed monoidal category.
In a $\mathcal{V}$-enriched category $\mathcal{C}$, the \emph{copower} of $x\in\mathcal{C}$ by an object $v$ of $\mathcal{V}$ is an object $v\odot x\in\mathcal{C}$ with natural isomorphism $\mathcal{C}(v\odot x,y)\cong\mathcal{V}(v,\mathcal{C}(x,y))$.
\end{definition}

The nerve of the under category appears in the definition of the homotopy colimit,  and these two concepts are defined below.
\begin{definition}
Let $\mathcal{C}$ be a category and $c\in\mathcal{C}$ an object. Then the \emph{under category}, or category of objects of $\mathcal{C}$ under  $c$, $(c\downarrow\mathcal{C})$ is a category with objects $(b,f)$ where $b\in\mathcal{C}$ and $f:c\rightarrow b$, and the morphisms $(b,f)\rightarrow(b',f')$ is a map $g:b\rightarrow b'$ that makes the triangle below commute.

$$\begin{array}{ccc}
 &c& \\
 \swarrow & &\searrow\\
 b\ \ \ &\longrightarrow&\ \ \ b'
\end{array}$$
\end{definition}

\begin{definition}
Let $\mathcal{C}$ be a small category. The \emph{nerve} of $\mathcal{C}$ is the simplicial set $N\mathcal{C}$ where the $n$-simplex $\sigma$ is a diagram in $\mathcal{C}$ of the form 
$c_0\rightarrow c_1\rightarrow\ldots\rightarrow c_n$
with maps
$d_i:NC_n\rightarrow NC_{n-1}$ by composing at $i$-th object, and $s_i:NC_n\rightarrow NC_{n+1}$ by adding an identity morphisms at $i$.
\end{definition}
\subsection{Categories topological spaces and chain complexes}
\label{topcc}
\subsubsection{Topological spaces}
\begin{definition}
The \emph{category of topological spaces}, denoted by {\bf Top},  is a category that has topological spaces as the objects and for any two spaces $X,Y$ the set of morphisms ${\bf Top}(X,Y)$ consists of all continuous maps between $X$ and $Y$.
\end{definition}

The category {\bf Top} has an initial object, the empty space, as there is a continuous map from the empty space to any other topological space. 
The products in the category {\bf Top} are just the usual products of topological spaces, where the underlying space is a Cartesian product and it has the product topology.
The coproducts in {\bf Top} are disjoint unions of topological spaces.

Limits and colimits in {\bf Top} are lifted from the category of sets, that is the limit  of the diagram $D$ is the set of the set limit of the diagram with initial topology and final topology in the case of colimit.We know that all finite limits and colimits exist. 

\begin{definition}
Let $f:X\rightarrow Y$ be a continuous map. Then the \emph{mapping cone} of $f$, denoted with $C_f$, is the space $(X\times[0,1])\sqcup_f Y$ with the identification of $X\times {0}$ with a single point and $(x,1)\sim f(x)$.

The \emph{mapping cylinder} is constructed in the same way, but instead identifying $X\times{0}$ with a single point we identify it with $X$.
\end{definition}

\begin{definition}
Let $f,f':X\rightarrow Y$ be morphisms in {\bf Top}. Then $f$ is  said to be \emph{homotopic} to $f'$, denoted by $f\sim f'$, if there exists a morphism $F:X\times [0,1]\rightarrow Y$ such that $F(x,0)=f(x)$ and $F(x,1)=f'(x)$ for all $x\in X$. 
Two spaces $X$ and $Y$ are homotopic if we have morphisms $f:X\rightarrow Y$ and $g:Y\rightarrow X$ such that $f\circ g\sim\operatorname{id}$ and $g\circ f\sim\operatorname{id}$. 
\end{definition}

In {\bf Top} the colimits do not preserve homotopy, however this is a desirable property so one can define the homotopy colimit in {\bf Top}.
Homotopy colimits are defined using the category {\bf Ord} as follows, see \cite{WZZ} for more details. 
The category {\bf Ord} consists of finite sets $[n]=\{0,1,\ldots,n\}$ as the objects and non-decreasing maps, i.e. $f:[n]\rightarrow [m]$ then $f(i)\leq f(i+1)$ as the morphisms. The morphisms in {\bf Ord} are generated by two maps, namely $\delta_n^i:[n]\rightarrow[n-1]$ and $\sigma_n^i:[n]\rightarrow[n+1]$.

\begin{definition}
A \emph{simplicial space} is a contravariant functor $F$ from {\bf Ord} to {\bf Top}. 
The functors form a category of simplicial spaces with the morphisms being the natural transformations between the functors.
\end{definition}
A particular case of the simplicial space is the simple geometric realization functor $R:\bf{Ord}\rightarrow\bf{Top}$ taking the set $[n]$ to the standard $n$-dimensional simplicial complex $\Delta_n$. 

\begin{definition}
The \emph{geometric realization} of a simplicial space $F$ is the direct sum $\bigsqcup F_n\times  \Delta_n$ quotiented out by the relations $(d^i(x),p)\sim(x,R(\delta^i)(p))$ and $(s^i(x),p)\sim(x,R(\sigma^i)(p))$ where $d^i$ and $s^i$ are the images of $\delta^i$ and $\sigma^i$ under $F$.
\end{definition}
\begin{definition}
The \emph{classifying space} of a category $\mathcal{A}$ is the geometric realization of the simplicial space $F_{\mathcal{A}}$ associated to $\mathcal{A}$, which is the functor $F_{\mathcal{A}}:\bf{Ord}\rightarrow\bf{Set}$ taking the set $[n]$ to the sequence $\alpha_n\leftarrow\ldots\leftarrow\alpha_0$.
\end{definition}

For some small category $A$ and objects, let $A_{\downarrow a} $ be the category of all arrows $a\rightarrow b$ with commutative triangles as the morphisms. Let $B(A_{\downarrow a})$ be the classifying space of $A_{\downarrow}$. 
\begin{definition}
\label{tophocolim}
The \emph{homotopy colimit} of the diagram $D:A\rightarrow \bf{Top}$, denoted by $\mathrm{hocolim} D$ is the quotient of the coproduct $\sqcup_{a\in A}B(A_{\downarrow a})\times D_a$. The equivalence relation $\sim$ for the quotient is the transitive closure of $\alpha(p,x)\sim\beta(p,x)$ , where $\alpha$ and $\beta$ are the following maps 
$$\alpha:B(A_{\downarrow b})\times D_a\rightarrow B(A_{\downarrow b})\times D_b,\ \alpha(p,x)=(p,d_{f}(x)),$$
$$\beta:B(A_{\downarrow b})\times D_a\rightarrow B(A_{\downarrow a})\times D_a,\ \alpha(p,x)=(p,d_{f}(x))$$
for all morphisms $f:a\rightarrow b$.
\end{definition}

One can also approach the homotopy colimit from a more concrete view and take it as "gluing in mapping cylinders" to the diagram. 

\begin{definition}
The homotopy category of {\bf Top} is the category where the objects are same as in {\bf Top}, but the morphisms are homotopy classes of the morphisms. 
\end{definition}

\subsubsection{Chain complexes of $S$-modules}
All our rings are commutative and we reserve the notation $S$ for a polynomial ring, that is $S=R[x_1,x_2,\ldots,x_n]$, where $R$ is a commutative ring or a field. 
As with other review sections, there are many possible references and we refer the reader to \cite{Weibel} for more complete introduction to chain complexes of modules.

\begin{definition}
Let $\bf{Mod}_S$ denote the category of $S$-modules, where the objects are $S$-modules and the morphisms between a pair of modules $M$ and $N$, denoted by ${\bf{Mod}_S}(M,N)$, are the set of $S$-module homomorphisms from $M$ to $N$.
\end{definition}

\begin{definition}
 The \emph{category of chain complexes} $\mathcal{C}_{\bullet}(\bf{Mod}_S)$ is the category with the objects being chain complexes of objects of the category $\bf{Mod}_S$
$$\mathcal{C}_{\bullet}:\ \ldots\leftarrow C_0\leftarrow C_1\leftarrow\ldots\leftarrow C_n \leftarrow\ldots$$
where $C_i$ is in $\bf{Mod}_S$ and the maps $d_k:C_k\rightarrow C_{k-1}$ such that $d_id_{i+1}=0$.
The morphisms are given by chain maps between complexes. A chain map $f$ from $\mathcal{C}_{\bullet}$ to $\mathcal{D}_{\bullet}$ is a collection of maps $\{f_i:C_i\rightarrow D_i\}$ such that all the squares commute
$$\begin{array}{ccccc}
\cdots\leftarrow &C_i& \leftarrow& C_{i+1}&\leftarrow\cdots\\
 &\downarrow & & \downarrow&\\
 \cdots\leftarrow &D_i& \leftarrow& D_{i+1}&\leftarrow\cdots\\
\end{array}.$$
\end{definition}
\begin{remark}
We have stated the definition for the category of chain complexes of $S$-modules, however chain complexes can be defined for any category.
\end{remark}

In the category of chain complexes of $S$-modules the product is given by 
 the direct sum of two complexes, so the direct sum product of $C$ and $D$ is $C\oplus D$ with $(C\oplus D)_k=C_k\oplus D_k$ in the finite case. In the case of finite coproducts it is also the direct sum.
Limits and colimits can be computed degree wise in the category of chain complexes, and the category is also additive degree wise. From the degree wise property we have an explicit description of the limit and colimit in the category given by the following definition:

\begin{definition}
Let $f,g:\mathcal{C}_{\bullet}\rightarrow\mathcal{D}_{\bullet}$ be two chain maps. A \emph{homotopy} between $f$ and $g$ is a collection of maps $h_i:C_i\rightarrow D_{i+1}$ such that
$$f_i-g_i=d_{i+1}\circ h_i+h_{i-1}\circ d_i.$$
If a collection of the maps $h_i$ exists, then we write $f\sim g$.
Two complexes $\mathcal{C}_{\bullet}$ and $\mathcal{D}_{\bullet}$ are said to be \emph{homotopy equivalent}, denoted by $\mathcal{C}_{\bullet}\simeq\mathcal{D}_{\bullet}$, if there are chain maps $f:\mathcal{C}_{\bullet}\rightarrow\mathcal{D}_{\bullet}$ and $g:\mathcal{D}_{\bullet}\rightarrow\mathcal{C}_{\bullet}$ such that $f\circ g\sim\operatorname{id}$ and $g\circ f\sim\operatorname{id}$.
\end{definition}

 \begin{definition}
 Let $C$ and $D$ be two chain complexes, then the \emph{tensor product} $C\otimes D$ is given by
$$(C\otimes D)_k=\bigoplus_{i+j=k}C_i\otimes D_j$$
with differential  
$$d_k(x\otimes y)=d_i^C(x)\otimes y+(-1)^i x\otimes d_j^D(y) \textrm{ where }x\in C_i, y\in D_j.$$
\end{definition}

\begin{definition}
Let $\psi:G\rightarrow F$ be a map of chain complexes. Then the \emph{mapping cone} of $\psi$, $C(\psi)$, is the chain complex 
$$C(\psi)_i=F_i\oplus G_{i-1}$$
with differential map
$$d_{i}(f,g)=(-\psi(g)+d(f),-d(g)).$$
\end{definition}

As with topological spaces, we also have a mapping cylinder of chain complexes.
\begin{definition}
Let $\psi:G\rightarrow F$ be a map of chain complexes. Then the \emph{mapping cylinder} of $\psi$, $C(\psi)$ is the chain complex 
$$C(\psi)_i=F_i\oplus G_i\oplus G_{i-1}$$
with differential map
$$d_{i}(f,g,g')=(-\psi(g)+d(f),d(g)+id(g'),-d(g'))$$
\end{definition}

\subsection{Simplicial and CW-complexes}
Simplicial and CW-complexes are covered by many standard topology books, for example \cite{Schubert}.

\begin{definition}
An \emph{abstract simplicial complex} $\Delta$ is a set of vertices $V=\{1,\ldots,n\}$ with collection of subsets $A$ of $V$ such that if $A\subseteq V$ and $B\subseteq A$ then $B\in V$. The subsets are called simplices and we have that $\operatorname{dim}A=|A|-1$. The dimension of the simplicial complex $\Delta$ is the maximum dimension of its simplices. A face of $A$ in $\Delta$ is a nonempty subset $B\subseteq A$.
\end{definition}

\begin{definition}
A \emph{d-cell} is a topological space that is homeomorphic to the closed unit ball $B^d$ in $d$-dimensional Euclidean space. Let $\sigma$ be a d-cell, then $\sigma'$ denotes the subset corresponding to the $d-1$ sphere in $B^d$. By a \emph{cell} we mean a topological space that is a $d$-cell for some $d$.
\end{definition}

Let $X$ be a topological space and $\sigma$ a $d$-cell, and let
$f:\sigma'\rightarrow X$
be a continuous map. Then one can attach $\sigma$ to $X$ by taking the disjoint union $X\cup_f\sigma$, where $\sigma$ is quotiented by the relation identifying $x\in\sigma'$ with $f(x)$. The map $f$ is called attaching map in this case.

\begin{definition}
Any topological space $X$ is a {\it finite CW-complex} if it has a finite sequence $\emptyset\subset X_0\subset X_1\subset\ldots\subset X_n=X$ such that each $X_i$ is a result of attaching a cell to $X_{i-1}$. Note that this requires $X_0$ to be a 0-cell.

The sequence is called the {\it CW-decomposition} of $X$.
\end{definition}

A $d$-simplex with a geometric realization is a $d$-cell, hence we get that simplicial complexes are also CW-complexes.
The CW-complexes form a subcategory of {\bf Top}, and inherit the basic constructions defined for {\bf Top}. 

\begin{definition}
Let $X$ and $Y$ be CW-complexes. Then the \emph{join} of $X$ and $Y$ is the complex w get by connecting every vertex of $X$ to all vertices of $Y$ with an edge, and filling in the higher degrees accordingly.
\end{definition}

\begin{definition}
Let $X$ and $Y$ be CW-complexes. A continuous map $f:X\rightarrow Y$ is \emph{cellular} if $f(X_n)\subseteq Y_n$.
\end{definition}

Next we state a well-known theorem, that is found in many books. See \cite{Schubert} for a proof.
\begin{theorem}[Cellular approximation theorem]
Any map between CW-pairs is homotopic to a cellular map.
\end{theorem}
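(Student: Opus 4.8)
The plan is to produce the homotopy one skeleton at a time and to glue the partial homotopies together using the homotopy extension property of CW-pairs (the fact that a homotopy already defined on a subcomplex, together with the given map at time zero, extends to a homotopy of the whole complex). Writing $X_n$ and $Y_n$ for the $n$-skeleta, I would prove by induction on $n$ the statement that $f$ is homotopic to a map which is cellular on $X_n$, via a homotopy that is stationary on any subcomplex where $f$ is already cellular. The base case $n=0$ is immediate: $X_0$ is a discrete set of vertices, and each can be slid along a path into a $0$-cell of its path component of $Y$.

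For the inductive step I assume $f$ is cellular on $X_{n-1}$ and treat the $n$-cells one at a time. If $\sigma$ is an $n$-cell with characteristic map $\Phi:B^n\to X$, then $\Phi(\partial B^n)\subseteq X_{n-1}$, so $g:=f\circ\Phi$ sends the boundary sphere into $Y_{n-1}$. Since $g(B^n)$ is compact it meets only finitely many cells of $Y$; let $m$ be the largest dimension of a cell $c$ that it meets, and suppose $m>n$. The crux is the following \emph{local claim}: such a $g$ is homotopic rel $\partial B^n$ to a map whose image omits some interior point $p$ of $c$. Granting this, I compose with the radial deformation retraction of the punctured open cell $c\setminus\{p\}$ towards its frontier, which lies in $Y_{m-1}$; as $Y_{n-1}\cap c=\emptyset$ this homotopy fixes $\partial B^n$ and strictly decreases the number of $m$-cells that are met. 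Iterating over these finitely many top cells, then decreasing $m$, after finitely many steps $g(B^n)\subseteq Y_n$. Performing this for every $n$-cell and extending over $X$ by the homotopy extension property gives a map cellular on $X_n$, and the extension property also keeps the homotopy stationary on the part already handled.

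The one place needing genuine content, and the main obstacle, is the local claim, which is a general-position statement: a map from an $n$-disk cannot fill up an $m$-cell when $m>n$, so it must miss a point. I would prove it by working inside the Euclidean chart supplied by the characteristic map of $c$: after homotoping $g$ rel $\partial B^n$ so that, on the preimage of a closed ball inside $c$, it is replaced by a piecewise-linear (or smooth) map taking values in that ball, the image of this source of dimension $\leq n$ has empty interior in the $m$-cell — by Sard's theorem in the smooth model, or because a simplicial map sends each simplex to a simplex of dimension at most $n$ in the PL model. Hence there is a point $p\in c$ outside the image. The delicate points are carrying out the approximation only where it is needed and rel the boundary, and checking that the finitely many local modifications are mutually compatible; these are routine but must be handled with care.

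Finally, the version for a map of CW-pairs $(X,A)\to(Y,B)$ follows by first applying the relative statement to the subcomplex $A$ (using that $B$ is itself a subcomplex of $Y$, so that the target skeleta restrict correctly), making $f$ cellular on $A$, and then running the skeletal induction above on all of $X$ rel $A$. Combining the two homotopies through the homotopy extension property yields a cellular map homotopic to $f$ as a map of pairs.
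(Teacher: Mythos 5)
The paper does not prove this statement at all: it is quoted as a classical result and the proof is delegated to the reference \cite{Schubert}, so there is no in-paper argument to compare against. Your sketch is the standard textbook proof of cellular approximation (skeleton-by-skeleton induction, the homotopy extension property to globalize, and the general-position ``compression'' lemma to push an $n$-disk off a point of an $m$-cell with $m>n$ and then radially onto its frontier), and it is essentially what the cited sources do. The outline is correct, and you correctly isolate the only step with real content, namely the local claim; the PL/smooth approximation inside a Euclidean chart, carried out rel boundary and only on the preimage of a small ball, is exactly how that lemma is proved. Two small points worth noting: the radial retraction of the punctured cell must be performed through the characteristic map (in general the closed cell need not be embedded, though for the regular complexes used in this paper it is), and handling ``every $n$-cell'' simultaneously requires a word about the weak topology when there are infinitely many cells --- which is moot here since the paper works only with finite CW-complexes.
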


\begin{definition}
We say that the CW-complex is regular if each of the $X_i$, for all $i$, is homeomorphic to a ball. 
\end{definition}
Regular complexes have geometric properties that are beneficial and in particular the properties of 2 and 3 from the Proposition \ref{idk} are needed for well-behaving cellular resolutions. We state these below as a proposition.
\begin{proposition}[\cite{cf}, Chapter 2]
\label{idk} Let $X$ be a regular CW-complex and $\sigma_n$ an n-cell of $X$, then
\begin{enumerate}
\item If $m<n$ and $\sigma_m$ and $\sigma_n$ are cells such that their intersection is non-empty, then we have that $\sigma_m\subset\sigma_n$.
\item  For $n\geq0$, $\overline{\sigma_n}$ and $\partial \sigma_n$ are subcomplexes, and furthermore $\partial\sigma_n$ is the union of closures of (n-1)-cells.
\item If $\sigma_n$ and $\sigma_{n+2}$ are cells such that $\sigma_n$ is a face of $\sigma_{n+2}$, then there are exactly two (n+1)-cells between them.
\end{enumerate}
\end{proposition}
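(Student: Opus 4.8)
The plan is to prove the three statements in the order 2, 1, 3, since the subcomplex property in item 2 is what turns the incidence structure in items 1 and 3 into something combinatorial. Throughout I would work with the standard form of regularity: the characteristic map $\Phi_{\sigma}\colon B^{d}\to\overline{\sigma}$ of each $d$-cell is a homeomorphism onto the closed cell, so that $\overline{\sigma}$ is a $d$-ball and $\partial\sigma=\Phi_{\sigma}(S^{d-1})$ is a genuine $(d-1)$-sphere. All three arguments then proceed by induction on the dimension of the skeleton in the CW-decomposition $\emptyset\subset X_0\subset\cdots\subset X_n=X$.

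For item 2, I would argue by induction on $n$. Since $\overline{\sigma_n}$ is compact it meets only finitely many open cells, and the attaching map carries $\partial\sigma_n$ into the $(n-1)$-skeleton $X_{n-1}$. By the inductive hypothesis each closed cell of dimension below $n$ is a subcomplex, and I would combine this with regularity (the homeomorphism $\Phi_{\sigma_n}$ identifies $\partial\sigma_n$ with $S^{n-1}$) to show that whenever $\partial\sigma_n$ contains a point of an open cell $\tau$ it contains all of $\overline{\tau}$. Hence $\partial\sigma_n$, and therefore $\overline{\sigma_n}=\sigma_n\cup\partial\sigma_n$, is a union of closed cells and so a subcomplex. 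That $\partial\sigma_n$ is covered by the closures of $(n-1)$-cells follows because it is an $(n-1)$-sphere, hence an $(n-1)$-manifold in which no cell of smaller dimension can be locally maximal.

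Item 1 then follows quickly. Reading the hypothesis as $\overline{\sigma_m}\cap\overline{\sigma_n}\neq\emptyset$ with $m<n$, I would use that $\overline{\sigma_n}$ is a subcomplex by item 2, i.e.\ a union of open cells, together with the fact that the open cells partition $X$. A point of $\overline{\sigma_m}$ lying in $\overline{\sigma_n}$ must lie in one of the open cells comprising $\overline{\sigma_n}$; chasing this through the partition and using that $\overline{\sigma_m}$ is itself saturated forces $\sigma_m\subseteq\overline{\sigma_n}$, which is the asserted face relation.

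The main obstacle is item 3, the statement that a codimension-two face lies below exactly two intermediate cells. Here I would pass to the boundary sphere $\partial\sigma_{n+2}$, which by item 2 is a regular CW-complex homeomorphic to $S^{n+1}$ and contains $\sigma_n$ in its cell structure. The number of $(n+1)$-cells $\tau$ with $\sigma_n<\tau<\sigma_{n+2}$ equals the number of top-dimensional cells of this sphere incident to $\sigma_n$, which I would identify with the vertices of the link of $\sigma_n$. The crux is that regularity forces this link to be a $0$-sphere, i.e.\ exactly two points; this is the ``thinness'' of the face poset of a regular CW-complex, obtainable from the characterization (see \cite{cf}) that every length-two interval $[\sigma_n,\sigma_{n+2}]$ has an open interval whose order complex is homeomorphic to $S^{0}$. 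Establishing this local spherical structure rigorously—rather than a purely homological count, which only bounds the number of intermediate cells—is the delicate step, and I expect it to require the full strength of regularity through the homeomorphism type of the link of $\sigma_n$ inside the ball $\overline{\sigma_{n+2}}$.
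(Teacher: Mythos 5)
The paper does not actually prove this proposition: it is quoted verbatim as background from Cooke and Finney \cite{cf}, Chapter 2, so there is no in-paper argument to measure yours against. On its own terms, your outline of item 2 (induction on skeleta, compactness of $\overline{\sigma_n}$, regularity identifying $\partial\sigma_n$ with $S^{n-1}$, and density of the $(n-1)$-cells in that sphere) is the standard and essentially correct route.

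There are, however, two genuine problems. First, in item 1 you elect to read the hypothesis as $\overline{\sigma_m}\cap\overline{\sigma_n}\neq\emptyset$, and under that reading the statement is false: attach an edge $e$ to a square $\sigma_2$ at a single vertex $v$; then $\overline{e}\cap\overline{\sigma_2}=\{v\}\neq\emptyset$ and $1<2$, yet $e\not\subset\overline{\sigma_2}$. The intended hypothesis is that the \emph{open} cell $\sigma_m$ meets the \emph{closed} cell $\overline{\sigma_n}$, in which case the conclusion is immediate from item 2 (closed cells are subcomplexes, hence saturated unions of open cells, and the open cells partition $X$). Your ``chasing through the partition'' cannot repair the closed--closed reading, because the common point need only lie in a proper face of $\sigma_m$. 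Second, and more seriously, your treatment of item 3 defers exactly the point at issue: the assertion that the interval $[\sigma_n,\sigma_{n+2}]$ is thin --- equivalently that the link of $\sigma_n$ in $\partial\sigma_{n+2}$ is a $0$-sphere --- \emph{is} item 3, and you justify it by appealing to the characterization ``see \cite{cf}'', the very source the paper cites for the whole proposition. A self-contained argument must work locally at a point $x\in\sigma_n$ inside the $(n+1)$-manifold $\partial\sigma_{n+2}$: a small punctured neighbourhood of $x$ has exactly two components transverse to the locally flat $\sigma_n$, and regularity (each incident closed $(n+1)$-cell is an embedded ball meeting that neighbourhood in exactly one side) is what puts these two components in bijection with the intermediate $(n+1)$-cells. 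As written, that step is absent rather than merely delicate, so the crux of item 3 remains unproved.
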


\begin{definition}
Let $X$ be a regular CW-complex. Then
$X$ comes equipped with an orientation of the faces, and a function $\operatorname{sign}(F',F)$ on pairs of faces $F,F'$. The functions take values in $\{0,1,-1\}$, with $\operatorname{sign}(F',F)$ nonzero if and only if $F'$ is a facet of $F$, and $\operatorname{sign}(F',F)=1$ if the orientation of $F'$ induces the orientation for $F$.
\end{definition}
 The $\operatorname{sign}(F',F)$ can also be thought of as giving the sign of $F'$ in the boundary map of $F$.  
\begin{proposition}[\cite{sth}, Lemma 7.1]
The sign- function given above exists for regular CW-complexes and satisfies the described properties.
\end{proposition}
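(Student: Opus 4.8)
The plan is to realize $\operatorname{sign}$ as the incidence function of the cellular chain complex of $X$: I would first fix orientations homologically, then define the values as local degrees, use regularity to force them into $\{0,\pm1\}$, and finally verify the boundary-map compatibility, which is the only substantial step.

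For orientations, note that by Proposition \ref{idk}(2) the closure $\overline{F}$ of an $n$-cell $F$ and its boundary $\partial F$ are subcomplexes, and since $X$ is regular $\overline{F}$ is a ball with $\partial F$ the bounding sphere, so $H_n(\overline{F},\partial F)\cong\mathbb{Z}$. An \emph{orientation} of $F$ is a choice of generator $[F]$ of this group, and I would fix one such generator for every cell. For an $(n-1)$-cell $F'$ I then define $\operatorname{sign}(F',F)$ to be the integer by which the composite
$$H_n(\overline{F},\partial F)\xrightarrow{\ \partial\ }H_{n-1}(\partial F)\longrightarrow H_{n-1}\bigl(\partial F/(\partial F\setminus F')\bigr)\cong H_{n-1}(\overline{F'},\partial F')$$
carries $[F]$ to a multiple of $[F']$, and I set $\operatorname{sign}(F',F)=0$ whenever $F'$ is not a facet of $F$. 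The normalization $\operatorname{sign}(F',F)=1$ in the case where the orientation of $F'$ is the induced one is then a tautology: ``induced'' means precisely that $[F']$ was chosen to agree with the image of $\partial[F]$ under the above collapse.

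The regularity hypothesis is what pins the nonzero values down to $\pm1$. Since $X$ is regular, the characteristic map of $F$ is a homeomorphism onto $\overline{F}$ and restricts to a homeomorphism $\partial F\to S^{n-1}$, and the closed facet $\overline{F'}$ is a closed $(n-1)$-ball inside $\partial F$. Consequently the target $\partial F/(\partial F\setminus F')$ of the collapse is homeomorphic to $\overline{F'}/\partial F'\cong S^{n-1}$, and the displayed composite restricts to a homeomorphism on the image of $F'$ while being constant off it; its degree is therefore $\pm1$ precisely when $F'$ is a facet of $F$ and $0$ otherwise. This yields simultaneously the value-set property and the \emph{nonzero if and only if facet} property.

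The main obstacle is the boundary-map property, that is, that $\operatorname{sign}$ assembles into a differential: for every pair $F''\subset F$ with $\dim F''=\dim F-2$ one must have
$$\sum_{F'}\operatorname{sign}(F'',F')\,\operatorname{sign}(F',F)=0,$$
where $F'$ runs over the cells with $F''\subset F'\subset F$. This is exactly where the full force of regularity enters through Proposition \ref{idk}(3): there are \emph{exactly two} intermediate $(n-1)$-cells $F'_1,F'_2$, so the sum has precisely two terms, and it suffices to show they cancel. I would argue this locally, passing to the quotient that collapses everything outside a neighbourhood of $F''$ so that $F'_1$ and $F'_2$ appear as the two halves of the boundary sphere of a $2$-ball meeting along $F''$; the two induced orientations of $F''$ coming from $F'_1$ and $F'_2$ are then opposite, forcing $\operatorname{sign}(F'',F'_1)\operatorname{sign}(F'_1,F)=-\operatorname{sign}(F'',F'_2)\operatorname{sign}(F'_2,F)$. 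Reconciling the four local orientation choices around this diamond cleanly is the delicate part of the argument, and it is the ``exactly two cells'' structure of Proposition \ref{idk}(3) that makes the cancellation possible at all.
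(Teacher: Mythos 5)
The paper offers no proof of this proposition at all---it is imported verbatim from Massey \cite{sth}, Lemma 7.1---so there is nothing internal to compare against; your sketch is essentially the standard construction that the cited reference carries out, and it is correct in outline. The orientations as generators of $H_n(\overline{F},\partial F)\cong\mathbb{Z}$, the incidence number as the degree of the collapse composite, and the use of regularity (characteristic maps being homeomorphisms, $\overline{F'}$ a closed ball in $\partial F\cong S^{n-1}$) to force nonzero values to $\pm1$ exactly when $F'$ is a facet are all as in the reference. One remark on the step you single out as delicate: the reconciliation of the four orientation choices around a diamond $F''\subset F'_1,F'_2\subset F$ does not have to be done by hand. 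Because your $\operatorname{sign}(F',F)$ is by construction the matrix entry of the composite $H_n(X_n,X_{n-1})\to H_{n-1}(X_{n-1})\to H_{n-1}(X_{n-1},X_{n-2})$, the identity $\sum_{F'}\operatorname{sign}(F'',F')\operatorname{sign}(F',F)=0$ follows at once from exactness of the long exact sequence of the pair $(X_{n-1},X_{n-2})$, since $\partial\circ\partial$ factors through two consecutive maps of that sequence. Proposition \ref{idk}(3) then tells you \emph{a posteriori} that this homological identity is a cancellation of exactly two $\pm1$ terms; it is a consequence you can read off, not an ingredient you need in order to establish $\partial^2=0$. With that substitution your argument closes cleanly and matches what Massey does.
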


For polyhedral cell complexes $X$ one can associate a chain complex to it with the differential maps given by
$\partial(F)=\sum_{G\subset F}\textrm{sign}(G,F)G$. 
\begin{definition}
 A \emph{reduced chain complex} $\tilde{C}(X;k)$ for $X$ is a chain complex, where the ith vector space in the chain complex is $k^{F_i}$ with basis consisting of vectors labelled by the $i$ dimensional faces of $X$. 
\end{definition} 
\begin{remark}
 Different orientations for the cell complex give isomorphic chain complexes, and so the orientation can be chosen freely. 
\end{remark}

\subsection{Discrete and algebraic Morse theory}
We focus our attention on discrete and algebraic Morse theory due to the nature of the objects we study.

\subsubsection{Discrete Morse theory}

The main reference used for this discrete Morse theory section is \cite{forman}.

\begin{definition}
Let $X$ be a cell complex. 
A \emph{face poset} diagram $P_X$ for $X$ is a directed graph with vertices corresponding to $n$-cells of the cell complex. We have an edge from $\beta$ to $\alpha$ if and only if $\alpha$ is a codimension 1 face of $\beta$.
\end{definition}

\begin{definition}
A \emph{matching} on a graph is a set of pairwise non-adjacent edges.
Let $X$ be a cell complex with  face poset $P_X$. Then a \emph{Morse matching} on $P_X$ is a matching $M$ such that $P_X$ has no directed cycles when the edges in $M$ are reversed.

A vertex is \emph{critical} if it is not in the Morse matching.
\end{definition}

Now we can state the main theorem of Morse theory. We have chosen to use the form  given in $\cite{E10}$ since it will be convenient in the later sections.
\begin{theorem}[Main theorem of discrete Morse theory,\cite{E10}]
If $X$ is a regular CW-complex with a Morse matching (giving at least one critical vertex), then there exists a CW complex $Y$ that is homotopy equivalent to $X$, and the number of d-dimensional cells of $Y$ equals the number of d-dimensional critical cells of $X$ for every d.
\end{theorem}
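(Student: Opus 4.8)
The plan is to run the standard sublevel-set argument of discrete Morse theory, using the acyclicity of the matching to put the cells in a workable order and regularity (Proposition \ref{idk}) to control the topology at each step. First I would translate the Morse matching $M$ into a discrete Morse function. Reversing the matched edges of the face poset $P_X$ gives, by hypothesis, an acyclic directed graph; a topological sort of it yields a function $f$ on the cells that increases along faces except across matched pairs, where a matched $(n{-}1)$-cell $\alpha$ and $n$-cell $\beta$ receive values with $f(\alpha)\ge f(\beta)$. The unmatched cells are then exactly the critical cells of $f$.

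Next I would filter $X$ by the sublevel complexes $X(c)=\bigcup_{f(\sigma)\le c}\overline{\sigma}$ and analyse how the homotopy type changes as $c$ increases past one cell at a time. The two key steps are: (i) when the cell being added is non-critical, it appears as one half of a matched pair, and I would show that passing from $X(c^-)$ to $X(c)$ is an elementary collapse of the pair $(\alpha,\beta)$, hence a deformation retraction and a homotopy equivalence; (ii) when the cell added is critical of dimension $d$, the inclusion $X(c^-)\hookrightarrow X(c)$ is, up to homotopy, the attachment of a single $d$-cell. Step (i) is where regularity enters: Proposition \ref{idk}(3) guarantees the correct incidence pattern so that $\alpha$ is a genuine free face of $\beta$ inside the sublevel complex, and the acyclicity guarantees that $\beta$ and all of $\alpha$'s other cofaces are already present at the stage where $\alpha$ first appears.

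Composing these steps by induction on the (finitely many) values of $f$, I would build a CW-complex $Y$ by attaching exactly one $d$-cell for each critical $d$-cell, together with a homotopy equivalence $X\simeq Y$ obtained by composing the deformation retractions at the non-critical steps with the homotopy equivalences at the critical steps. Counting then gives that the number of $d$-cells of $Y$ equals the number of critical $d$-cells of $X$ for each $d$, which is the assertion; the parenthetical hypothesis guaranteeing a critical vertex simply ensures $Y$ is nonempty.

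The hard part will be the local analysis in step (i): verifying that each matched pair can genuinely be removed by an elementary collapse at the right moment, i.e.\ that at the stage where $\alpha$ becomes available it is a face of $\beta$ and of no other cell still outside the sublevel complex. I expect to prove this by induction along the linear order from the topological sort, using minimality of the next cell in the reversed poset to force freeness. Regularity (Proposition \ref{idk}) is used throughout to ensure that each sublevel complex is itself a subcomplex with well-defined boundaries, so that \emph{elementary collapse} and \emph{deformation retraction} mean what we need them to.
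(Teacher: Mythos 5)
The paper does not prove this statement at all: it is quoted as a known background result (Forman's main theorem, in the matching formulation of \cite{E10}), so there is no in-paper argument to compare against. Your outline is the standard proof from the discrete Morse theory literature and is essentially sound: convert the acyclic matching to a discrete Morse function by a linear extension of the modified Hasse diagram, filter by sublevel complexes, show that crossing a matched pair is an elementary collapse and crossing a critical cell is a cell attachment up to homotopy, and compose. Two small points are worth flagging. First, your appeal to Proposition \ref{idk}(3) (the diamond property) is not quite the right ingredient for the free-face step; what makes the elementary collapse a deformation retraction is regularity in the sense of parts (1)--(2) (closed cells are balls and boundaries are subcomplexes, so a matched pair forms a ball pair), while the \emph{freeness} of $\alpha$ at its stage is forced purely by the order: every coface of $\alpha$ other than $\beta$ has strictly larger Morse value, hence lies outside the current sublevel complex, and one must also check it cannot sneak in as a face of a lower-valued cell --- this is exactly the combinatorial lemma about discrete Morse functions that Forman proves. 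Second, assembling $Y$ from the critical attachments requires the standard fact that attaching a cell along homotopic attaching maps, or to homotopy equivalent spaces, yields homotopy equivalent results; you use this implicitly when you ``compose the homotopy equivalences at the critical steps.'' With those two details supplied, your argument is a correct reconstruction of the cited theorem.
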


A Morse matching with a single edge is an elementary collapse in discrete Morse theory. This can be explicitly on the CW-complex by the following definition, see \cite{cohen} for more details.
\begin{definition}
Let $X$ be a finite CW-complex and let $Y$ be a subcomplex of $X$.
Then there is an \emph{elementary collapse} of $X$ to $Y$, $X\searrow^e Y$ if and only if
$X=Y\cup e^{n-1}\cup e^n$ where $e^{n-1}$ and $e^n$ are not in $Y$, and there exists a ball pair $(Q^n,Q^{n-1})$ and a map $\varphi: Q^n\rightarrow X$ such that
\begin{itemize}
\item $\varphi$ is a characteristic map for $e^n$,
\item $\varphi|Q^{n-1}$ is a characteristic map for $e^{n-1}$, and
\item $\varphi(P^{n-1})\subset Y$ where $p^{n-1}=C1(\partial Q^n-Q^{n-1})$.
\end{itemize}
\end{definition}

\subsubsection{Algebraic Morse theory}
\label{algmorse}

The algebraic analogue of discrete Morse theory was developed by Sk\"oldberg \cite{ES05} and J\"ollenbeck and Welker \cite{jol} independently. It allows us to apply Morse theory techniques to chain complexes.
For a more complete and  detailed overview of algebraic Morse theory, the reader may look up the original works of Sk\"oldberg and J\"ollenbeck. The notation used in this section follows that of \cite{ES05}.

Let $K$ be a based chain complex of $S$-modules
$$0\longleftarrow K_0\xleftarrow{d} K_1\xleftarrow{d} K_2\longleftarrow\cdots$$
with $K_{i}=\bigoplus_{j}K_{i,j}$, where $K_{i,j}$ is an $S$-module and $d$ is the differential in the chain complex.

\begin{definition}
The \emph{directed graph associated to $K$}, denoted by $\Gamma_K$, is defined as follows. The vertices of the graph are given by the summands in each homological degree and the directed edges go down in the degrees. We have an edge from $K_{i,j}$ to $K_{i-1,j'}$ if $d(K_{i,j})\cap K_{i-1,j'}$ is not empty. 
We denote by $d_{j,k}$ the component of the differential corresponding to an edge from $K_{i,k}$ to $K_{i-1,j}$.
\end{definition}

\begin{remark}
The graph depends on the decomposition chosen for the $K_i$ in the chain complex. 
\end{remark}

\begin{definition}
A \emph{Morse matching} on the graph $\Gamma_K$ is a partial matching $M$ on $\Gamma_K$, satisfying that there are no directed cycles in the graph $\Gamma_K^M$, which is $\Gamma_K$ with the edges from $M$ reversed, and that the maps in $K$ corresponding to the edges in $M$ are isomorphisms. 
\end{definition}

\begin{proposition}[\cite{ES05}, Chapter 2]
\label{algmorsemap}
From the Morse matching $M$ we can form a graded map $\varphi: K\rightarrow K$. If $j$ is minimal with respect to the partial order $\prec$ and  $x\in K_{i,j}$, the map is given by
$$\varphi(x)=\left\lbrace\begin{array}{cl}
d^{-1}_{j,k}(x),& \exists\textrm{ an edge from }K_{i,k}\textrm{ to }K_{i-1,j}\textrm{ for some }k\in M\\
0,&\textrm{otherwise}
\end{array}\right.$$
If $j$ is not minimal then $\varphi$ is given by
$$\varphi(x)=\left\lbrace\begin{array}{cl}
d^{-1}_{j,k}(x)-\sum\varphi d_{m,k}d^{-1}_{j,k}(x)& \exists\ \textrm{an edge from }K_{i,k}\textrm{ to }K_{i-1,j}\textrm{for }k \in M\\
0&\textrm{otherwise}
\end{array}\right.$$
where the sum is over all edges from $K_{i,k}$ to $K_{i-1,m}$
The map $\varphi$ is a splitting homotopy as ti satisfies $\varphi^2=0$ and $\varphi\circ d\circ\varphi=\varphi$.
\end{proposition}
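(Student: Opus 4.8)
The plan is to establish the three assertions in turn: that the recursion defining $\varphi$ terminates so that $\varphi$ is a well-defined graded map, that $\varphi^2=0$, and that $\varphi\circ d\circ\varphi=\varphi$. The only feature of the Morse matching that I will need beyond the matched edges being isomorphisms is the acyclicity of $\Gamma_K^M$, so I would isolate its role first.

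First I would make the partial order $\prec$ precise. Since reversing the matched edges of $\Gamma_K$ yields the acyclic graph $\Gamma_K^M$, reachability along directed paths of $\Gamma_K^M$ is a well-founded partial order on the summands, with the sinks as its minimal elements. I would then check that the recursion respects this order. For $x\in K_{i,j}$ with $j$ minimal the formula is closed, so there is nothing to resolve. For $j$ non-minimal the correction terms have the shape $\varphi\, d_{m,k}\, d^{-1}_{j,k}(x)$: the step $d^{-1}_{j,k}$ traverses a reversed matched edge (legal because it is an isomorphism) and the step $d_{m,k}$ a genuine edge of $\Gamma_K$, so the composite moves strictly downward and every summand $m$ occurring satisfies $m\prec j$. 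Well-foundedness of $\prec$ then shows the recursion terminates, so $\varphi$ is well defined by induction on $\prec$.

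Next, $\varphi^2=0$ should follow structurally from where $\varphi$ takes its values. Every nonzero output of $\varphi$ is assembled, through the recursion, out of terms $d^{-1}_{j,k}(\,\cdot\,)$ lying in a summand $K_{\bullet,k}$ that is the source of a matched edge. Because $M$ is a matching, a summand that is the source of a matched edge cannot simultaneously be the target of one; hence such a summand falls into the ``otherwise'' branch, where $\varphi$ vanishes. Thus $\operatorname{im}(\varphi)$ is contained in the span of the matched-source summands, on which $\varphi$ is zero, giving $\varphi^2=0$. I would record this as a short induction on $\prec$ confirming that every term produced by the recursion lands in a matched-source summand.

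The substantive step, and the one I expect to be the main obstacle, is the homotopy identity $\varphi\circ d\circ\varphi=\varphi$. I would prove it by induction on $\prec$, paralleling the recursive definition. On summands where $\varphi=0$ both sides vanish, so only $x\in K_{i,j}$ with $j$ a matched target matters. Writing $y=d^{-1}_{j,k}(x)$, the components of the differential give $d(y)=x+\sum_{m\prec j} d_{m,k}(y)$, and the definition reads $\varphi(x)=y-\sum_{m\prec j}\varphi\, d_{m,k}(y)$. Applying $d$ and then $\varphi$ and expanding $d\varphi(x)$, the terms $d_{m,k}(y)$ sit in summands $m\prec j$ and are therefore governed by the inductive hypothesis; the correction sum in the definition of $\varphi$ is exactly what cancels the unwanted contributions, leaving $\varphi\, d\, \varphi(x)=\varphi(x)$. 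Bookkeeping of signs and of which terms are absorbed by the inductive hypothesis is the delicate part; the cleanest way I know to keep it under control is to view $\varphi$ as the contraction produced by Gaussian elimination, i.e.\ the homological perturbation lemma applied to the matched part $B\xrightarrow{\ \cong\ }A$ of the differential, with the closed formulas above being its expansion as a sum over gradient paths. Assembling the three parts then shows that $\varphi$ is a splitting homotopy, as claimed.
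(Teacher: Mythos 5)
Your argument is correct, but there is nothing in the paper to compare it against: Proposition \ref{algmorsemap} is stated as a quoted background result, cited to Chapter 2 of Sk\"oldberg's paper, and the present paper supplies no proof of it. What you have written is essentially a faithful reconstruction of the standard argument from that source. Your three steps all check out: acyclicity of $\Gamma_K^M$ (together with finiteness, which holds in this paper's setting) makes reachability a well-founded order, and each correction term $\varphi\, d_{m,k}\, d^{-1}_{j,k}$ moves strictly downward, so the recursion terminates; the image of $\varphi$ lands in matched-source summands, which cannot also be matched targets since $M$ is a matching, giving $\varphi^2=0$; and the induction for $\varphi d\varphi=\varphi$ closes cleanly --- writing $y=d^{-1}_{j,k}(x)$, one gets $\varphi(dy)=\varphi(x)+\sum_m\varphi(d_{m,k}(y))$ and $\varphi d\varphi(x)=\varphi(dy)-\sum_m\varphi d\varphi(d_{m,k}(y))$, and the inductive hypothesis on the $m\prec j$ collapses the two sums, with no sign subtleties beyond the minus sign already built into the definition. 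The only point worth tightening is the base case of the last induction: if $j$ is minimal and a matched target, minimality forces the matched edge to be the only edge out of $K_{i+1,k}$ (or any other targets to have $\varphi=0$), which is why the closed formula needs no correction term there. Your remark that the whole construction is the gradient-path expansion of the homological perturbation lemma is accurate and is the cleanest way to see the identities, though the direct induction you sketch is already complete.
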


Let $\pi$ be the chain map given by $\pi=id-(d\circ\varphi+\varphi\circ d)$. We have that $\pi(v)=0$ if $v$ is a vertex incident to an edge in the partial matching $M$.

\begin{theorem}[\cite{ES05}, Theorem 1]
Let $M$ be a Morse matching on the complex $K$. Then the complexes $K$ and $\pi(K)$ are homotopy equivalent. Furthermore we have for each $n$ an isomorphism of modules $\pi (K_n)\cong \bigoplus_{\alpha\in M_n^0}K_{\alpha}$.
\end{theorem}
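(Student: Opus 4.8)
The plan is to deduce everything from the two identities $\varphi^2=0$ and $\varphi\circ d\circ\varphi=\varphi$ recorded in Proposition \ref{algmorsemap}, treating $\pi=\operatorname{id}-(d\varphi+\varphi d)$ as a projection whose image is the reduced complex. First I would check that $\pi$ is a chain map and an idempotent. Writing $P=d\varphi+\varphi d$, the relation $d\pi=\pi d=d-d\varphi d$ follows at once from $d^2=0$, so $\pi$ commutes with the differential. For idempotency one expands $P^2=d\varphi d\varphi+\varphi d\varphi d$, the two cross terms vanishing by $\varphi^2=0$ and $d^2=0$, and then applies $\varphi d\varphi=\varphi$ twice to obtain $P^2=d\varphi+\varphi d=P$; hence $\pi^2=\operatorname{id}-2P+P^2=\operatorname{id}-P=\pi$. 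Since $\pi$ is an idempotent chain map, $\pi(K)=\operatorname{im}(\pi)$ is a subcomplex of $K$ and each $K_n$ splits degreewise as $\operatorname{im}(\pi_n)\oplus\ker(\pi_n)$.

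For the homotopy equivalence I would use the inclusion $\iota:\pi(K)\hookrightarrow K$ together with the corestricted projection $\pi:K\to\pi(K)$. Idempotency gives $\pi\circ\iota=\operatorname{id}_{\pi(K)}$ on the nose, while in the other direction $\operatorname{id}_K-\iota\pi=\operatorname{id}_K-\pi=d\varphi+\varphi d$. This is precisely the form of a chain homotopy in the sense of the homotopy definition stated earlier, with homotopy operator $\varphi$, so $\iota\pi\sim\operatorname{id}_K$ and the complexes $K$ and $\pi(K)$ are homotopy equivalent. This step is purely formal once Proposition \ref{algmorsemap} is available and uses none of the combinatorics of the matching.

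The module isomorphism is where the matching data enters, and it is the part I expect to need the most care. I would split each $K_n$ as $C_n\oplus A_n$, where $C_n=\bigoplus_{\alpha\in M_n^0}K_\alpha$ collects the critical summands and $A_n$ collects the summands incident to an edge of $M$. Surjectivity of $\pi|_{C_n}$ onto $\pi(K_n)$ is then immediate: as noted just before the statement, $\pi$ annihilates every matched summand, so $\pi(K_n)=\pi(C_n)+\pi(A_n)=\pi(C_n)$. For injectivity I would read off from the formula in Proposition \ref{algmorsemap} that $\varphi$ vanishes on critical summands and always takes values in source summands (the tops of matched edges, which lie in $A_n$). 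Consequently, for $c\in C_n$ one gets $\pi(c)=c-d\varphi(c)-\varphi(dc)=c-\varphi(dc)$ with $\varphi(dc)\in A_n$, so the $C_n$-component of $\pi(c)$ equals $c$. Thus $\pi|_{C_n}$ has trivial kernel, and together with surjectivity it is an isomorphism $C_n\xrightarrow{\sim}\pi(K_n)$, yielding $\pi(K_n)\cong\bigoplus_{\alpha\in M_n^0}K_\alpha$.

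The main obstacle, and the place where acyclicity of the matching is genuinely used, is concealed in the assertion that $\varphi$ is well defined and satisfies the splitting-homotopy identities: the recursive formula terminates only because reversing the matched edges leaves no directed cycle, so the partial order $\prec$ is well-founded. Since Proposition \ref{algmorsemap} already supplies this, the remaining work is the bookkeeping above, and the one delicate verification is that $\varphi$ lands only in source summands, so that the triangularity argument (that the $C_n$-component of $\pi(c)$ is $c$) goes through degreewise and independently of the ground ring.
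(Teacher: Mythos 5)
The paper does not prove this statement itself --- it is quoted verbatim from Sk\"oldberg's Theorem 1 --- so there is no internal proof to compare against; your argument is the standard splitting-homotopy proof from that source and it is correct. The formal computations ($\pi$ idempotent chain map, $\operatorname{id}-\iota\pi=d\varphi+\varphi d$ exhibiting the homotopy equivalence) and the triangularity argument (that $\varphi$ kills critical summands and lands in matched ones, so the critical component of $\pi(c)$ is $c$) are exactly the right decomposition, and you correctly isolate acyclicity of the matching as the ingredient hidden inside the well-definedness of $\varphi$.
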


\begin{remark}
Instead of $\pi(K)$, we can look at the chain complex $\overline{K}$ given by $$\overline{K}_i=\bigoplus_{K_{ij}\textrm{ is unmatched in }M}K_{i,j}.$$ 
Let $\rho$ be the projection from $K=\bigoplus_iK_i$ to $\overline{K}$.
The differential $\overline{d}$ can be defined as $\overline{d}=\rho(d-d\varphi d)$. The complex $\overline{K}$ is then also homotopy equivalent to~$K$.
\end{remark}

\section{The category of CellRes}
We want to define the category of cellular resolutions.
In Section \ref{cellres} we give the definition of a cellular resolution. Then we  define morphism, in detail, and finally in Section \ref{def} we define the category of cellular resolutions.

\subsection{Cellular resolutions}
\label{cellres}

Most of the material in this section can be found in Miller and Sturmfels \cite{cca}. 
In \cite{cca} cellular resolutions are defined over a connected regular CW-complex. However, we see no reason to restrict ourselves to this case, rather the contrary, we want the non-connected cell complexes as well. This difference does not show up in the definition, so it is the same as found in \cite{cca}. 

\begin{definition}
 A labeled cell complex $X$ is a regular CW-complex with monomial labels on the faces. The  vertices of $X$ have labels $\bf{x}^{\bf{a}_1},\bf{x}^{\bf{a}_2},\ldots,\bf{x}^{\bf{a}_r}$ where $\bf{a}_1,\bf{a}_2,\ldots,\bf{a}_r\in\mathbb{N}^n$. The faces $F$ of $X$ have the least common multiple of the monomial labels of the vertices it contains, $x^{\bf{a}_F}=\mathrm{lcm}\{x^{\bf{a}_v}:v\in F\}$. The label on the empty face is 1, i.e. $\bf{x}^{\bf{0}}$.
\end{definition}
\begin{definition} 
  The \emph{degree} of a face $F$ is the exponent vector $\bf{a}_F$ of the monomial label.
\end{definition}

Recall that for a non-labeled cell complex we can construct the reduced chain complex of vector spaces. In the case of a labeled cell complex, we also have the algebraic data of the monomial labels, which we would like to see included in the data of the chain complex. Thus we can define the
following complex of free $\mathbb{N}^n$ graded $S$-modules, called the cellular free complex, and denoted by $\mathcal{F}_X$. 

\begin{definition}
Let $S(-\bf{a}_F)$ be the free $S$-module with a generator $F$ in degree $\bf{a}_F$. Then the \emph{cellular complex} $\mathcal{F}_X$ is given by $(\mathcal{F}_X)_i=\bigoplus_{\substack{F\in X\\ \mathrm{dim}F=i-1}}S(-\bf{a}_F)$ with a differential  $$\partial(F)=\sum_{G\subset F}\textrm{sign}(G,F)x^{\bf{a}_F-\bf{a}_G}G.$$  
\end{definition}
The differential is a homogeneous map, so it preserves the degree.

\begin{remark}
Often one only considers the coarser $\mathbb{N}$-grading for the chain complex, as in many examples no significant data is lost by this. For simplicity, we will omit grading in several examples, as often is the case.
\end{remark}

\begin{proposition}[\cite{cca}, Definition 4.3]
The differentials in the  cellular complex can also be described by monomial matrices, with the columns and rows having the corresponding faces as labels and the scalar entries coming from the usual differential for reduced chain complex. The free $S$-modules of $\mathcal{F}_X$ are then the ones represented by the matrices. 
\end{proposition}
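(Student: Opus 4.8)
The plan is to verify that the $S$-linear differential $\partial\colon(\mathcal{F}_X)_i\to(\mathcal{F}_X)_{i-1}$, expressed in the generator bases indexed by the faces of $X$, is precisely the data encoded by a monomial matrix, and that the scalar coefficients appearing are exactly the entries of the ordinary reduced chain complex differential. The structural fact I would rely on is that for the $\mathbb{N}^n$-graded ring $S=R[x_1,\ldots,x_n]$, the graded component $S_{\mathbf{b}}$ in any degree $\mathbf{b}\in\mathbb{N}^n$ is the free rank-one $R$-module spanned by the single monomial $x^{\mathbf{b}}$. This rank-one property is what forces each homogeneous entry of the differential to be a scalar multiple of one fixed monomial, and so it is what makes the monomial matrix description canonical.

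First I would confirm that $\partial$ is homogeneous of degree $\mathbf{0}$. Since the generator $F$ lies in degree $\mathbf{a}_F$ and each term $\operatorname{sign}(G,F)\,x^{\mathbf{a}_F-\mathbf{a}_G}G$ of $\partial(F)$ has degree $(\mathbf{a}_F-\mathbf{a}_G)+\mathbf{a}_G=\mathbf{a}_F$, the map preserves degree, as already noted after the definition of $\mathcal{F}_X$. Here one must also check $\mathbf{a}_F-\mathbf{a}_G\in\mathbb{N}^n$, so that $x^{\mathbf{a}_F-\mathbf{a}_G}$ is a genuine monomial of $S$ rather than a Laurent monomial; this holds because $G\subset F$ forces $x^{\mathbf{a}_G}\mid x^{\mathbf{a}_F}$, the label of a face always dividing the label of a face containing it since the latter is an lcm taken over a larger vertex set.

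Finally I would read off the matrix. With respect to the bases $\{F:\dim F=i-1\}$ and $\{G:\dim G=i-2\}$, the $(G,F)$-entry of $\partial$ is the coefficient of $G$ in $\partial(F)$, namely $\operatorname{sign}(G,F)\,x^{\mathbf{a}_F-\mathbf{a}_G}$, which vanishes unless $G$ is a facet of $F$. Recording $\mathbf{a}_G$ on the rows and $\mathbf{a}_F$ on the columns determines the monomial factor $x^{\mathbf{a}_F-\mathbf{a}_G}$ from the labels alone, so the only remaining datum is the scalar $\operatorname{sign}(G,F)$; by the rank-one property this scalar-times-monomial factorization is unique, the scalars take values in $\{0,\pm 1\}\subseteq R$, and they are precisely the entries of the reduced differential $\partial(F)=\sum_G\operatorname{sign}(G,F)G$. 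Since a monomial matrix also carries the degrees of its row and column generators, the column labels $\{\mathbf{a}_F\}$ reconstruct $(\mathcal{F}_X)_i=\bigoplus_{\dim F=i-1}S(-\mathbf{a}_F)$ and the row labels the target, yielding the second assertion. I expect no genuine obstacle, as the statement is a faithful translation of the definition; the only point needing care is the uniqueness of the scalar-times-monomial factorization, which is exactly where the $\mathbb{N}^n$-grading of $S$ is used.
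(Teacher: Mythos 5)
Your verification is correct and complete: the two essential points are exactly the ones you isolate, namely that $G\subset F$ forces $x^{\mathbf{a}_G}\mid x^{\mathbf{a}_F}$ (so the monomial $x^{\mathbf{a}_F-\mathbf{a}_G}$ lies in $S$ and is determined by the row and column labels), and that degree-$\mathbf{0}$ homogeneity together with the rank-one graded pieces of $S$ forces each entry to factor uniquely as a scalar times that monomial, leaving only the reduced-chain-complex signs as data. The paper offers no proof of its own here --- the statement is cited directly as Definition 4.3 of Miller and Sturmfels --- so your argument is simply the standard unfolding of the definition of a monomial matrix that the citation elides, and there is nothing to object to.
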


The above $\mathcal{F}_X$ certainly defines a chain complex but for a reolution we require the chain complexes to have homology only at degree 0. With this in mind one has the following standard definition for cellular resolutions.
\begin{definition}
We call the chain complex $\mathcal{F}_X$ a \emph{cellular resolution} if it is acyclic, that is, $\mathcal{F}_X$ has non-zero homology only at degree 0.
\end{definition}
\begin{remark}
The subscript $X$ in $\mathcal{F}_X$ emphasizes from which cell complex the resolution comes from, and the subscript can be omitted at times.
Sometimes the cellular resolutions is thought of as the pair $(X,\mathcal{F})$, where $X$ is the labeled cell complex and $\mathcal{F}$ is the cellular resolution. 
\end{remark}

If the CW-complex $X$ supporting a cellular resolution is connected, then we would only get one ideal from the labels, see for example \cite{cca} and \cite{def}. 
 The main difference between the connected and unconnected case is in the part of the module resolved; we have the following multiple component version of a Proposition 4.5 from \cite{cca} that is the same as theirs in the case of $X$ being connected.
Firstly, we need the definition for sub-complexes bound by labels.
 
\begin{definition}
For ${\bf a,b}\in\mathbb{N}^n$ we have that $\bf{a}\leq\bf{b}$ if $\bf{b}-\bf{a}\in\mathbb{N}^n$. Let $X_{\preceq \bf{b}}$ be the sub-complex of $X$ given by all the faces with labels $\preceq \bf{b}$ coordinate wise. Then let $X_{\prec\bf{b}}$ be the sub-complex  with all the faces having labels $\prec\bf{b}$.
\end{definition}
\begin{proposition}
\label{resmodule}
The cellular free complex $\mathcal{F}_X$ supported on $X=\sqcup X_i$ is a cellular resolution if and only if $X_{\preceq \bf{b}}$ is acyclic over $k$ for all $\bf{b}\in\mathbb{N}^n$.
When $\mathcal{F}_X$ is a cellular resolution then it is a resolution of $$S/I_1\oplus\ldots\oplus S/I_n$$ where $I_i$ is the ideal generated by the monomial labels on the vertices of $X_i$. 
\end{proposition}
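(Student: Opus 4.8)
The plan is to reduce the statement to the connected case, which is Proposition 4.5 of \cite{cca}, by exploiting the direct sum decomposition forced by the disjoint union $X=\sqcup_i X_i$. First I would observe that since every cell of $X$ is connected, no face meets two distinct components, so the differential $\partial$ preserves the decomposition by components and the cellular free complex splits as a direct sum of complexes
\[
\mathcal{F}_X=\bigoplus_i \mathcal{F}_{X_i},
\]
where each $\mathcal{F}_{X_i}$ is the cellular free complex of the connected labeled complex $X_i$, carrying its own degree-$0$ term $S$ (its own empty face) and its own augmentation $S\to S/I_i$. The subtle point to flag at the outset is exactly this treatment of the empty faces: we keep one $(-1)$-cell per component, so that $(\mathcal{F}_X)_0$ is free of rank equal to the number of components. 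This is what produces the direct sum $S/I_1\oplus\cdots\oplus S/I_n$ rather than a resolution of $S/(I_1+\cdots+I_n)$.

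Next I would use that homology commutes with direct sums of complexes, so $H_p(\mathcal{F}_X)=\bigoplus_i H_p(\mathcal{F}_{X_i})$ for every $p$. Consequently $\mathcal{F}_X$ has nonzero homology only in degree $0$ if and only if each summand $\mathcal{F}_{X_i}$ does. Applying Proposition 4.5 of \cite{cca} to each connected component $X_i$, the complex $\mathcal{F}_{X_i}$ is a cellular resolution if and only if $(X_i)_{\preceq\mathbf{b}}$ is acyclic over $k$ for every $\mathbf{b}\in\mathbb{N}^n$, and in that case $\mathcal{F}_{X_i}$ resolves $S/I_i$. Since $X_{\preceq\mathbf{b}}=\sqcup_i (X_i)_{\preceq\mathbf{b}}$, the condition that $(X_i)_{\preceq\mathbf{b}}$ be acyclic for all $i$ and all $\mathbf{b}$ is precisely the acyclicity hypothesis on $X_{\preceq\mathbf{b}}$ read component-wise, which yields the claimed equivalence and identifies the resolved module as $\bigoplus_i S/I_i$.

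To make the identification of the resolved module explicit, and to double-check the empty-face bookkeeping, I would unwind the connected case through the $\mathbb{N}^n$-grading. For fixed $\mathbf{b}$ the degree-$\mathbf{b}$ strand of $\mathcal{F}_{X_i}$ is a complex of finite-dimensional $k$-vector spaces whose degree-$p$ term has basis the $(p-1)$-faces $F$ of $X_i$ with $\mathbf{a}_F\preceq\mathbf{b}$, i.e. the $(p-1)$-faces of $(X_i)_{\preceq\mathbf{b}}$; sending $x^{\mathbf{b}-\mathbf{a}_F}F\mapsto F$ turns the monomial-weighted differential $\partial(F)=\sum_{G\subset F}\operatorname{sign}(G,F)x^{\mathbf{a}_F-\mathbf{a}_G}G$ into the ordinary signed boundary map. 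This exhibits the strand as the shifted reduced chain complex $\tilde{C}((X_i)_{\preceq\mathbf{b}};k)$, whence $H_p((\mathcal{F}_{X_i})_{\mathbf{b}})\cong\tilde{H}_{p-1}((X_i)_{\preceq\mathbf{b}};k)$. In particular $H_0((\mathcal{F}_{X_i})_{\mathbf{b}})\cong\tilde{H}_{-1}((X_i)_{\preceq\mathbf{b}};k)$ equals $k$ exactly when $(X_i)_{\preceq\mathbf{b}}$ has no vertices, i.e. when $x^{\mathbf{b}}\notin I_i$, which matches $(S/I_i)_{\mathbf{b}}$ in every degree and confirms $H_0(\mathcal{F}_X)=\bigoplus_i S/I_i$.

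The main obstacle is conceptual rather than computational: it is the correct handling of the $(-1)$-cells for a disconnected complex. For a genuine disjoint union carrying a single empty face, $\tilde{H}_0(X_{\preceq\mathbf{b}})$ would be nonzero as soon as two components have a vertex $\preceq\mathbf{b}$, so the complex would fail to be acyclic and would instead resolve $S/(I_1+\cdots+I_n)$. The content of the proposition is that by retaining one empty face per component, equivalently by reading the reduced chain complex of $X_{\preceq\mathbf{b}}$ component-wise as $\bigoplus_i\tilde{C}((X_i)_{\preceq\mathbf{b}};k)$, one recovers the direct-sum resolution. Once this convention is fixed, everything else follows from the connected statement together with the exactness of direct sums.
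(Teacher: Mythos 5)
Your proof is correct and follows essentially the same route as the paper's: split $\mathcal{F}_X$ as the direct sum $\bigoplus_i \mathcal{F}_{X_i}$, reduce to Proposition 4.5 of \cite{cca} on each connected component, and use that direct sums of chain complexes preserve acyclicity to identify the resolved module as $S/I_1\oplus\cdots\oplus S/I_n$. Your explicit attention to keeping one empty face per component (so that $(\mathcal{F}_X)_0\cong S^n$ and the cokernel of the last map is the direct sum rather than $S/(I_1+\cdots+I_n)$) makes precise a point the paper's proof leaves implicit.
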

\begin{proof}
The proof follows the proof of Proposition 4.5 in \cite{cca}, and restricting to a single component recovers it.
For the cell complex $X$ consisting of disjoint cell complexes $X_i$ for $i=1,\ldots,n$ with ideals $I_i$ generated by the labels of the component $X_i$, then the cellular resolution of $X$, if it exists, is a resolution of $S/I_1\oplus\ldots\oplus S/I_n$ and also satisfies the $X_{\preceq \bf{b}}$ condition from the above theorem.
 If each of the components of $X$ satisfies the condition that $(X_i)_{\preceq \bf{b}}$, then the whole $X=\sqcup X_i$ will have that $X_{\preceq \bf{b}}$ is acyclic for all $\bf{b}\in\mathbb{N}^n$ as a direct sum of chain complexes preserves acyclicity. The image of the last map in the resolution is $I_1\oplus\ldots\oplus I_n\subset S^n$, and has $S/I_1\oplus\ldots\oplus S/I_n$ as the cokernel.\end{proof}
\begin{remark}
Not every cell complex has a labeling that gives a cellular resolution, for example a triangle consisting of only edges and no interior does not have a labeling that would give a cellular resolution.
\end{remark}

\begin{example}
A common cellular resolution is the Taylor resolution. It is defined for any finitely generated monomial ideal $I\subset S$. Suppose that $I$ has $r$ generators. The Taylor  resolution is supported on $(r-1)$-dimensional simplex where each of the the vertices is labeled with one generator. When the ideal is given by the variables of $S$, the Taylor resolution is also a Koszul complex.
\end{example}
\subsection{Morphisms: compatible maps}
There have been few occasions where maps between cellular resolutions appear in the literature. 
In \cite{DM} the construction through the mapping cone of a cellular resolution gives a map that is a lift of the multiplication by one of the generators in the ideal. 
Another map  is the Morse map, that we get from discrete Morse theory (see Section \ref{morsesec} for more details).

For the morphisms we want maps that respect both the algebraic and topological structure of the cellular resolutions. This motivates the following definitions, and Example \ref{ex1} shows why one does not choose to take the standard chain maps between cellular resolutions.

\begin{definition}
Let $g:X\rightarrow Y$ be a cellular map between two labeled cell complexes $X$ and $Y$ with label ideals $I$ and $J$, respectively. 
The set map $\varphi_g: I\rightarrow J$ is the map defined by the action of $g$, i.e. the label $m_x\in I$ maps to $m_y\in J$ if and only if the face $x$ labeled by $m_x$ maps to the faces $y_1,\ldots, y_r$ labeled by $m_{y_1},\ldots,m_{y_r}$ with $m_y=\mathrm{lcm}(m_{y_1},\ldots,m_{y_r})$ under $g$, and $m_x\in I$ maps to $0$ if and only if the face labeled by $m_x$ is not mapped to anything in $Y$. 
\end{definition}

\begin{definition}
We say that the cellular map $g:X\rightarrow Y$ is compatible with a chain map ${\bf f}:\mathcal{F}_X\rightarrow \mathcal{F}_Y$ if they satisfy the following. The equality $f_0(x)=\varphi_g(x)$ holds for all $x\in I$, and $f_i$ maps the generator $e_x$, associated to face $x\in X$, in $\mathcal{F}_{X,i}$ to some linear combination of the generators $e_{y_i}$, $i\in\{1,2,\ldots,r\}$, associated to $y_i\in Y$ with the coefficients in $S$ if and only if $g$ maps $x$ to union of $y_1,y_2,\ldots, y_r$. 
\end{definition}

\begin{remark}
Given a cellular map $g$, we can identify a single chain map {\bf f} that is compatible with it. On the other hand, a chain map {\bf f} may be compatible with multiple cellular maps.

Let ${\bf f}$ be a chain map between two cellular resolutions $\mathcal{F}_X$ and $\mathcal{G}_Y$ with label ideals $I$ and $J$, respectively.
For simplicity let us assume that both $\mathcal{F}$ and $\mathcal{G}$ are from connected CW-complexes, so the resolutions start with $S$.  The generators of $\mathcal{F}_n$ correspond to the $(n+1)$-dimensional faces of $X$. Taking the differential $d_{n+1}$ corresponds to taking the boundary of the face corresponding to some generator of $\mathcal{F}_n$.

The compatibility conditions on ${\bf f}$ imply that $f_0$ takes the generators of $I$ to the generators of $J$. This gives some information on the cellular map $g$, explicitly how it maps the vertices from one complex to another. Furthermore,
the maps $f_n$ can be thought of as corresponding to a description of which dimension $n$ face maps to where. So now we would have information on $g$ as to which face in $X$ maps to which face in $G$.
The above conditions do not define a unique map on the topological space, but rather a family of homotopic continuous maps, as how the faces map to each other is not relevant for the algebraic side.
\end{remark}

Now we can define a map between two cellular resolutions.  
\begin{definition}
Let $\mathcal{F}_{X}$ and $\mathcal{F}_Y$ be two cellular resolutions coming from labeled CW-complexes $X$ and $Y$. 
A \emph{cellular resolution map} between the two cellular resolutions is a pair of maps $({\bf f},f)$ where ${\bf f}: \mathcal{F}_X\rightarrow \mathcal{F}_Y$ is a chain map and $f: X\rightarrow Y$ is a cellular map, such that the two are compatible.  
\end{definition}

\subsubsection{Examples}
\begin{example}
\label{ex1}
Let $S=k[x,y]$, where $k$ is a field, and let $\mathcal{F}$ be the Koszul complex of $(x,y)$ and $\mathcal{G}$ be the cellular resolution supported on the same cell complex as $\mathcal{F}$ but with labels $xy$ and $xy$.
Let us consider the possible maps from $\mathcal{F}$ to $\mathcal{G}$.
If we want the maps to respect the cellular structure, that is, sending vertices to vertices, we have four possible maps. These are illustrated in the Figure \ref{ex1maps}. 

\begin{figure}
\begin{center}
\includegraphics[scale=1]{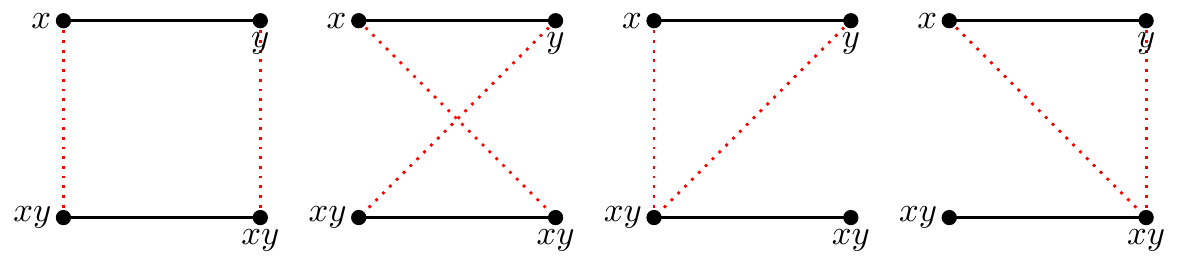}
\end{center}
\caption{Possible maps between the two cell complexes of Example \ref{ex1}.}
\label{ex1maps}
\end{figure}
On the level of resolutions the map is a chain map ${\bf f}$ where every square in the diagram of Figure \ref{ex1res} commutes.
\begin{figure}
\begin{center}
\includegraphics[scale=1]{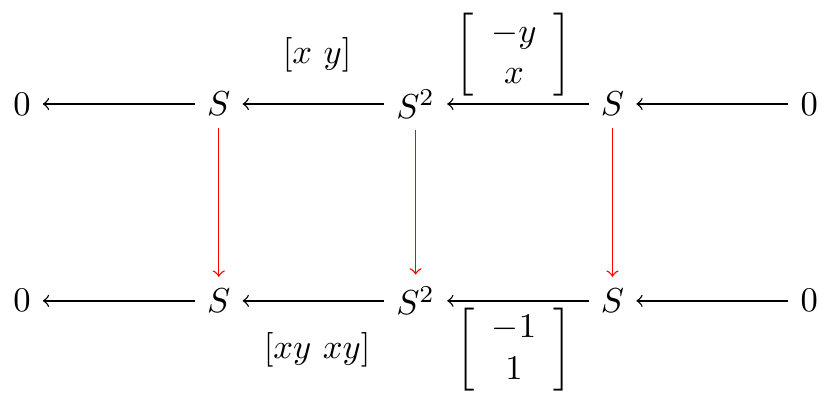}
\end{center}
\caption{The cellular resolutions of Example \ref{ex1}.}
\label{ex1res}
\end{figure}
The map $f_0$ has to match up with the mapping of the vertices, so in the first case we have that it maps $x$ to $xy$ and $y$ to the other $xy$. One can then check that the map making this commute for $f_1$ has to be one of the four maps $\left[\begin{array}{cc}
1&0\\
0&1
\end{array}\right]$,$\left[\begin{array}{cc}
0&1\\
1&0
\end{array}\right]$,$\left[\begin{array}{cc}
0&0\\
1&1
\end{array}\right]$, or $\left[\begin{array}{cc}
1&1\\
0&0
\end{array}\right]$. The first matrix maps the generators in the way we want for the case studied, and the other matrices correspond to the other three cases. It can be checked easily that for any of the four maps there is no $f_2$ that would make the second square commute, this can be done by computing the image of $\left[\begin{array}{c}
	-y\\
	x
\end{array}\right]$ composed with one of the maps and noticing that it can never be inside the image of $\left[\begin{array}{c}
	-1\\
	1
\end{array}\right]$. This implies that even if the map makes sense topologically on the level of cellular resolutions it does not work. 

However, just for the chain complexes one can find maps that behave well algebraically, for example le $f_1$ be given by $\left[\begin{array}{cc}
x&0\\
0&y
\end{array}\right]$.Then to make the squares commute one can check that $f_0$ and $f_2$ have to be multiplication by $xy$. 
Similarly we can choose $f_1$ to be given by $\left[\begin{array}{cc}
0&y\\
x&0
\end{array}\right]$ which would still have the other $f_i$ stay the same.
One may also try mapping things to a single generator, so now the map $f_1$ is $\left[\begin{array}{cc}
x&y\\
0&0
\end{array}\right]$( or  $\left[\begin{array}{cc}
0&0\\
x&y
\end{array}\right]$if one considers the last case). Again the other maps can be found by checking what maps make the squares commute, as we have that $f_1\circ d_2=0$ we get that $f_2=0$, and for $f_0$ we have that is must be the multiplication by $xy$.

None of the above maps preserve the degree of the elements between the resolutions. Trying to construct such map one would run into problems with $f_1$, as it is a map $S^2(-1)\rightarrow S^2(-2)$, so the constants in S(-1) have degree $1$, but in $S(-2)$ the only object of degree 1 is 0.  A condition that is reasonable to require from the maps is that the change in the degree is constant when the map is not a zero map, which follows from the commutativity of the squares.
\end{example}

\begin{example}

Let $S=k[x,y,z]$. Let $\mathcal{F}$ be the minimal resolution of the maximal ideal $I=(x,y,z)$ and let $\mathcal{G}$ be the minimal resolution of $I^2$.
We want to consider the map that as a cellular map sends $\mathcal{F}$ to the top rectangle of $\mathcal{G}$, as shown in Figure \ref{ex2maps}.
\label{ex2}
\begin{figure}

\begin{center}
\includegraphics[scale=1]{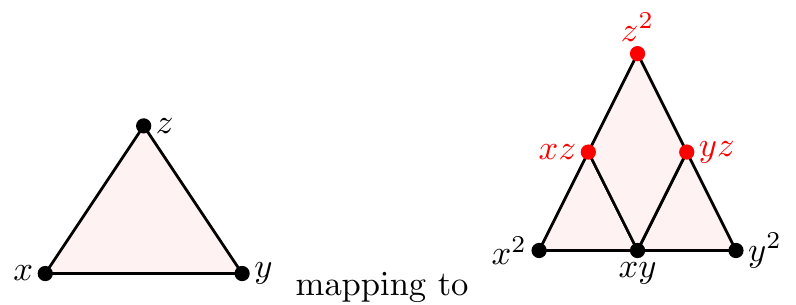}
\end{center}
\caption{Cellular map of Example \ref{ex2}.}
\label{ex2maps}
\end{figure}
The cellular resolutions of $\mathcal{F}$ and $\mathcal{G}$ are displayed in Figure \ref{ex2res}.
\begin{figure}
\includegraphics[scale=1]{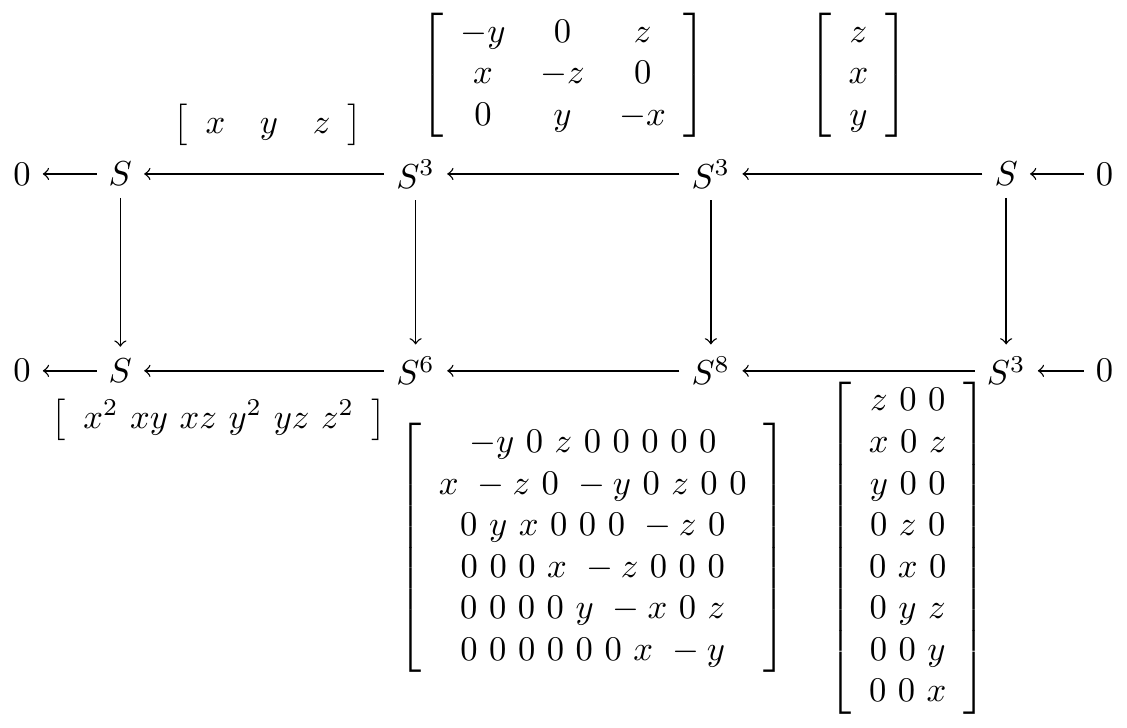}
\caption{Cellular resolutions of Example \ref{ex2}.}
\label{ex2res}
\end{figure}
On the level of the labels the map is a multiplication by $z$, so we know that the map $f_0$ is multiplication by $z$. 
Then one can choose $f_1$ such that it makes the first square commute. With a little computation one gets the matrix 
$$\left[\begin{array}{ccc}
0&0&0\\
0&0&0\\
1&0&0\\
0&0&0\\
0&1&0\\
0&0&1
\end{array}\right].$$
Clearly, as it consists of entries that are 1, and there is only one entry in each row the map $f_1$ sends the generators of $S^3$ in the first resolution to (some of) the generators of $S^6$ in the second resolution. Because of the ordering of the vertices this map corresponds to sending the vertices of the triangle to the red vertices. 
Constructing $f_2$ such that the second triangle commutes, and then $f_3$, we get the maps
$$f_2=\left[\begin{array}{ccc}
0&0&0\\
-1&0&0\\
0&0&0\\
0&0&0\\
0&0&0\\
-1&0&0\\
0&0&-1\\
0&-1&0
\end{array}\right],\ \mathrm{and} \ f_3=\left[\begin{array}{c}
0\\ 0\\ -1
\end{array}\right].$$
 These correspond to the map between cell complexes taking the edges and centre of the triangle to the edges and centre of the rectangle in the other complex. One of the edges gets subdivided and this is represented by having two entries in one column in the resolution map. 
 \end{example}
 
 \begin{example}
 \label{notmorph}
 
 Let $\mathcal{F}$ and $\mathcal{G}$ be the same cellular resolutions as in the previous example. Consider the cellular map taking $x\mapsto x^2$, $y \mapsto y^2$, and $z\mapsto z^2$.
 
 \begin{figure}
\begin{center}
\includegraphics[scale=1]{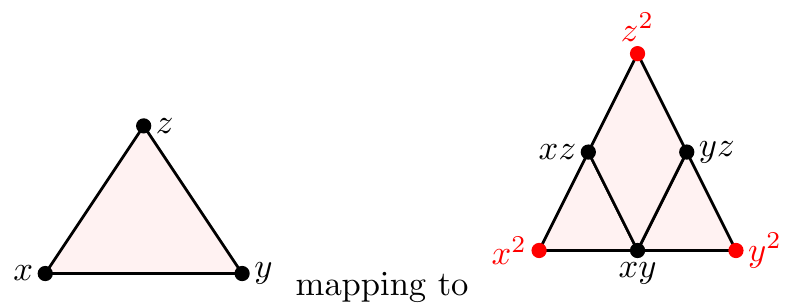}
\end{center}
\caption{Cellular map of the complexes in Example \ref{notmorph}.}
 \label{notmorphmaps}
\end{figure}

Figure \ref{notmorphmaps} shows this cellular map. If we consider the associated chain map to this, by the compatibility definition we notice that it
 does not work on the algebraic side. There is no map $f_2$ that would make the squares commute. Hence we do not have a cellular map.
\end{example}

\begin{example}
\label{projex}

Let $\mathcal{F}$ be the cellular resolution consisting of the direct sum of the two resolutions in the Example \ref{notmorph}, i.e. the cell complex is the disjoint union and the resolution is 
$$0\xleftarrow{} S\oplus S\xleftarrow{} S^3\oplus S^6\xleftarrow{} S^3\oplus S^8\xleftarrow{} S\oplus S^3\leftarrow 0.$$
Now consider the projection down to 
the Koszul complex of $I=(x,y,z)$.
We have standard projections both for the cell complex and the chain complex. More over the two are compatible as the label map generated by the topological projection is the same as the first component in the chain complex projection. 
Thus, the projection in this case is a cellular resolution morphism.
\end{example}

\subsection{The definition of  CellRes}
\label{def}

Now that we have defined the cellular resolutions and maps between them, we are ready to define the category {\bf CellRes}.

\begin{definition}
\label{DEF}
We define {\bf CellRes} to be the following data:
\begin{itemize}
\item A class of objects consisting of cellular resolutions, coming from any regular CW-complex.
\item A set of morphisms for any pair of objects $\mathcal{F}_X$ and $\mathcal{G}_Y$ with individual maps given by the compatible pairs $({\bf f},f)$.
\end{itemize}

\end{definition}

\begin{proposition}
\label{cat}
The defined data of {\bf CellRes} is a category.
\end{proposition}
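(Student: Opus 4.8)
The plan is to verify directly the data and axioms required by the definition of a (locally small) category: a well-defined composition of morphisms, the existence of identities, associativity of composition, and disjointness of the Hom-sets. Since a morphism of {\bf CellRes} is a \emph{pair} $({\bf f},f)$ of a chain map and a cellular map, I would define composition componentwise by
$$({\bf g},g)\circ({\bf f},f)=({\bf g}\circ{\bf f},g\circ f),$$
where ${\bf g}\circ{\bf f}$ is the ordinary composite of chain maps and $g\circ f$ the ordinary composite of continuous maps. All the category axioms are then formal consequences of the corresponding facts in $\mathcal{C}_{\bullet}(\mathbf{Mod}_S)$ and {\bf Top}, \emph{except} for the one genuinely new point: that the componentwise composite of two compatible pairs is again a compatible pair.

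First I would record the two elementary closure facts. The composite of chain maps is a chain map, since adjacent commuting squares paste into commuting squares, so ${\bf g}\circ{\bf f}:\mathcal{F}_X\rightarrow\mathcal{H}_Z$ is a chain map. The composite of cellular maps is cellular: if $f(X_n)\subseteq Y_n$ and $g(Y_n)\subseteq Z_n$ for all $n$, then $(g\circ f)(X_n)\subseteq Z_n$. Hence the componentwise composite is again a pair of the required type, and the only thing left to prove is compatibility.

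The heart of the argument, and the step I expect to be the main obstacle, is compatibility of the composite. Write $I,J,K$ for the label ideals of $X,Y,Z$. Compatibility of $({\bf f},f)$ and of $({\bf g},g)$ gives $f_0=\varphi_f$ on $I$ and $g_0=\varphi_g$ on $J$, together with the generator-tracking conditions on the higher components. On the degree-zero part one has $(g\circ f)_0=g_0\circ f_0=\varphi_g\circ\varphi_f$ on $I$, so the content is the label identity
$$\varphi_{g\circ f}=\varphi_g\circ\varphi_f.$$
On the generators of $I$ this is immediate: a cellular map carries the $0$-skeleton into the $0$-skeleton, so a vertex $x$ goes to a single vertex $f(x)$, whence $\varphi_f(m_x)=m_{f(x)}$ and both sides equal $m_{g(f(x))}$. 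The care is needed in checking that the least common multiples propagate correctly through the higher faces, where the image of a cell may cover several cells and its label is the lcm of theirs; here I would use that every face label is itself an lcm of vertex labels, so that the identity on vertices forces it on all face labels. For the higher components the generator-tracking condition is transitive: $f_i$ sends $e_x$ into the span of $\{e_{y_j}\}$ exactly when $f$ maps $x$ onto $\bigcup_j y_j$, and $g_i$ sends each such $e_{y_j}$ into the span of the generators indexed by the cells that $g(y_j)$ covers, so $g_i\circ f_i$ sends $e_x$ into the span of the generators indexed by the cells covered by $(g\circ f)(x)$, which is exactly the condition for $g\circ f$. Thus $({\bf g}\circ{\bf f},g\circ f)$ is compatible.

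The remaining axioms are routine. For each object $\mathcal{F}_X$ the pair $(\operatorname{id}_{\mathcal{F}_X},\operatorname{id}_X)$ is compatible, since the identity chain map and the identity cellular map trivially satisfy the label and generator conditions, and it is a two-sided identity for the componentwise composition. Associativity holds because it holds separately for chain maps and for cellular maps and our composition is coordinatewise. Disjointness of the morphism sets is automatic, as each morphism carries its source and target. Together these verify every clause of the definition of a category, so {\bf CellRes} is a category.
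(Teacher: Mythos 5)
Your proposal is correct and follows essentially the same route as the paper: componentwise composition of the pairs $({\bf f},f)$, with identities and associativity inherited separately from $\mathcal{C}_{\bullet}(\mathbf{Mod}_S)$ and {\bf Top}. The one place you go beyond the paper is in explicitly verifying that the composite of two compatible pairs is again compatible (via $\varphi_{g\circ f}=\varphi_g\circ\varphi_f$ and the transitivity of the generator-tracking condition) — the paper's proof takes this closure for granted, so your added check strengthens rather than diverges from its argument.
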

\begin{proof}
With this definition of morphisms, for every pair of cellular resolutions $\mathcal{F}$ and $\mathcal{G}$ there is a set (possibly empty) of maps. There also exists an identity morphism $id_{\mathcal{F}}=({\bf id}_{\mathcal{F}},id_X)$ for every cellular resolution $\mathcal{F}$, where ${\bf id}_{\mathcal{F}}$ is the identity map of chain complexes on the resolution $\mathcal{F}$ and $id_X$ is the identity map on the cell complex $\mathcal{F}$ is supported on. We also have that the composition of the morphisms is associative in each component as both composition of chain maps and composition of cellular maps are. 
\end{proof}
\begin{remark}
The category {\bf CellRes} depends on the base ring where the labels are defined, and each ring gives a separate category. In the case the underlying ring $S$ is of importance we denote the category by $\bf{CellRes}_S$. 

For two polynomial rings $R$ and $S$ with a ring homomorphism we get a functor $\varphi: {\bf CellRes}_R\rightarrow{\bf CellRes}_S$ given by mapping the resolution $F$ in ${\bf CellRes}_R$ to the resolution $G$ in ${\bf CellRes}_S$ by the induced map on the free modules. The morphisms only change the chain map according to the induced module map and the cell map stays the same as the topological structure does not change.
\end{remark}

\subsubsection{Subcategories of CellRes} 
\label{subcat}
The category {\bf CellRes} has many subcategories. Depending whether the restrictions are on the morphisms or objects, subcategories can allow us to study some subsets of cellular resolutions. We mention here few of the most commonly considered types of cellular resolutions.

Restricting the cellular resolutions to those coming from just labeled simplicial complexes gives a subcategory of {\bf CellRes} defined on simplicial complexes. This subcategory is full as all the morphisms are there. Every monomial ideal $I$ has a resolution in the category of cellular resolutions coming from simplicial complexes thanks to the Taylor resolution. 

Another possibility is to just look at the category of minimal cellular resolutions. We know that most of the constructions given in the following sections are not closed in this subcategory as they give non-minimal resolutions.

\section{Properties of the Category CellRes}
In this section we present some observations for the category CellRes. Among these are definition and results on homotopy on cellular resolutions, and forgetful functors to {\bf Top} and $\mathcal{C}_{\bullet}({\bf Mod}_S)$.

\begin{proposition}
The  cellular resolution 
$$0\leftarrow S\leftarrow0$$ supported on the empty complex is the initial object in the category {\bf CellRes}.
\end{proposition}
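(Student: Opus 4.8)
The plan is to show that for an arbitrary object $\mathcal{G}_Y$ of {\bf CellRes} there is exactly one compatible pair $(\mathbf{f},f)$ from $\mathcal{E} := (0\leftarrow S\leftarrow 0)$, supported on the empty complex, to $\mathcal{G}_Y$; existence together with uniqueness is precisely the definition of an initial object. I would split the morphism into its topological half $f$ and its algebraic half $\mathbf{f}$, handle each separately, and then reconcile them through the compatibility condition. First I would note in passing that $\mathcal{E}$ is a legitimate object: its only nonzero homology is $S$ in degree $0$, so it is acyclic and resolves $S$ itself (the empty complex carries only the empty face, contributing $S(-\mathbf{0})=S$ in homological degree $0$).

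For the cellular map, the underlying CW-complex of $\mathcal{E}$ is the empty complex. Since the empty space is the initial object of {\bf Top} (as recorded earlier), and the unique map $\emptyset\to Y$ is vacuously cellular, there is exactly one cellular map $f\colon\emptyset\to Y$ for every target. Thus the topological half of any morphism out of $\mathcal{E}$ is automatically forced and unique.

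The algebraic half is where the real content lies. Writing $\mathcal{E}_0=S$ and $\mathcal{E}_i=0$ for $i\geq 1$, any chain map $\mathbf{f}$ has $f_i\colon 0\to(\mathcal{G}_Y)_i$ equal to the zero map for $i\geq 1$, so only $f_0\colon S\to(\mathcal{G}_Y)_0$ is in question. The key step, which I expect to be the main obstacle, is to show that compatibility forces $f_0=\mathrm{id}_S$. I would argue this using that the degree-$0$ term of every cellular complex is the summand $S$ indexed by the empty face, and that the empty face of the empty complex maps to the empty face of $Y$ under $f$; the compatibility condition read at the empty face then gives $f_0(e_\emptyset)=e_\emptyset$, i.e. $f_0=\mathrm{id}_S$. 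It is essential to stress that this genuinely uses compatibility and not merely the chain-map axioms: as a map of plain chain complexes one has $\mathrm{Hom}(\mathcal{E},\mathcal{G}_Y)\cong(\mathcal{G}_Y)_0\cong S$, since all differentials out of $\mathcal{E}$ vanish and every element of $(\mathcal{G}_Y)_0$ is a cycle, so the chain-map structure alone leaves a whole free module of choices. It is exactly the interaction with the (unique) cellular map, via the empty-face labeling of the augmentation, that collapses these choices to a single one.

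Finally I would verify that the resulting pair, with $f$ the empty map, $f_0=\mathrm{id}_S$, and $f_i=0$ for $i\geq 1$, is a bona fide morphism of {\bf CellRes}: the chain-map squares commute because every differential emanating from $\mathcal{E}$ is zero, and compatibility holds by construction at the empty face and vacuously in positive degrees, since the empty complex has no cells of dimension $\geq 0$ to constrain. As both $f$ and $\mathbf{f}$ were forced at each stage, the morphism $\mathcal{E}\to\mathcal{G}_Y$ is unique, and hence $\mathcal{E}$ is initial. The only delicate point is the degree-$0$ bookkeeping, namely making precise that the empty-face generator must be sent to the empty-face generator; everything else is formal.
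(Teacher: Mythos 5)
Your construction of the morphism is exactly the paper's: the empty cellular map into the supporting complex of the target, $f_0=\mathrm{id}_S$, and $f_i=0$ for $i\geq 1$. Where you genuinely go beyond the paper is in the uniqueness half: the paper's proof only exhibits the morphism and then invokes the definition of initial object, leaving unverified that no \emph{other} compatible pair exists. Your observation that the chain-map axioms alone would allow a whole module $(\mathcal{G}_Y)_0\cong \mathrm{Hom}(S,(\mathcal{G}_Y)_0)$ of choices for $f_0$, and that it is the compatibility condition at the empty face that collapses this to the single map $f_0=\mathrm{id}_S$, is the real content of initiality and is absent from the paper. The one caveat is the point you already flag as delicate: the paper's definition of the label map $\varphi_g$ is phrased in terms of faces being ``mapped to something in $Y$,'' and read literally for the empty cellular map it could be taken to force $f_0=0$ rather than $f_0=\mathrm{id}_S$; your reading, that the empty face is present in every labeled complex and is always carried to the empty face, is the convention needed to make the statement true, and is consistent with the paper's assignment of the label $1$ to the empty face. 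With that convention made explicit, your argument is complete and strictly more careful than the paper's.
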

\begin{proof}
The empty complex is defined to have the label 1, and  has a cellular complex 
$$0\leftarrow S\leftarrow0$$ which is also the resolution of the ideal $(1)$.

From this resolution we have a map $({\bf f},f)$ to any cellular resolution $\mathcal{F}$ by taking the chain map ${\bf f}$ to be the zero map for $f_i$ when $i\geq 1$, and by defining $f_0$ to be the identity. The cellular map $f$ is  the embedding of the empty complex to the cell complex supporting the resolution $\mathcal{F}$. By definition of the initial object, the cellular resolution $0\leftarrow S\leftarrow0$ is an initial object in ${\bf CellRes}$.
\end{proof}

We have a well defined concept of homotopy for both cell complexes and chain complexes. The next definition lifts these definitions to {\bf CellRes}.

\begin{definition}
Let $({\bf f},f),({\bf g}, g): \mathcal{F}\rightarrow \mathcal{G}$ be two morphisms in {\bf CellRes}.
We say that $({\bf f},f)$ is homotopic to $({\bf g},g)$ if the components are homotopic, i.e. ${\bf f}\sim{\bf g}$ as chain maps and $f\sim g$ as continuous topological maps.
\end{definition}

Homotopies form a nice class of maps in {\bf Top}. Among the desirable properties is that they satisfy 2-out-of-3 property and 2-out-of-6 property, which are defined below. These also lift to cellular resolutions.

\begin{definition}
Let $K$ be a class of morphisms in a category $\mathcal{C}$. Two compasable morphism $f$ and $g$ are said to satisfy the \emph{2-out-of-3 property} if any two of the morphisms $f$, $g$, and $g\circ f$ are in $K$, then the third is too.

Three composable morphisms $f$,$g$, and $h$ satisfy the \emph{2-out-of-6 property} if $h\circ g$ and $g\circ f$ are in $K$, then so are $f$,$g$,$h$, and $h\circ g\circ h$.
\end{definition}

\begin{proposition}
Homotopy maps form a class of morphism that satisfy the 2-out-of-3 property and 2-out-of-6 property.

\end{proposition}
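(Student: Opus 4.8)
The plan is to read $K$ as the class of homotopy equivalences in \textbf{CellRes}, i.e. those morphisms $(\mathbf{f},f)$ admitting a morphism $(\mathbf{g},g)$ with $(\mathbf{g},g)\circ(\mathbf{f},f)\sim\mathrm{id}$ and $(\mathbf{f},f)\circ(\mathbf{g},g)\sim\mathrm{id}$, and then to decouple both properties into the corresponding statements in the two component categories $\mathcal{C}_{\bullet}(\mathbf{Mod}_S)$ and \textbf{Top}. The structural fact I would lean on is that composition in \textbf{CellRes} is computed componentwise, $(\mathbf{g},g)\circ(\mathbf{f},f)=(\mathbf{g}\circ\mathbf{f},\,g\circ f)$, and that by the homotopy definition just given the homotopy relation on morphisms is componentwise as well. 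Hence a pair $(\mathbf{f},f)$ lies in $K$ exactly when $\mathbf{f}$ is a chain homotopy equivalence and $f$ is a homotopy equivalence of spaces, and the whole problem splits into two classical settings.

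First I would record the congruence property in each component: the homotopy relation is preserved under pre- and post-composition. For chain complexes this is immediate from the defining identity $f_i-g_i=d_{i+1}h_i+h_{i-1}d_i$, and for \textbf{Top} it is the standard fact that $f\sim f'$ gives $kf\sim kf'$ and $fk\sim f'k$. With homotopy a congruence, the 2-out-of-3 property for homotopy equivalences follows by the usual bookkeeping with homotopy inverses: if $f'f\sim1$, $ff'\sim1$, $g'g\sim1$, $gg'\sim1$, then $f'g'$ is a two-sided homotopy inverse of $gf$; and if $gf$ and $g$ are equivalences, with $h$ a homotopy inverse of $gf$, then $hg$ is a homotopy inverse of $f$, and symmetrically for $g$. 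I would carry out this computation once and observe it reads verbatim in both $\mathcal{C}_{\bullet}(\mathbf{Mod}_S)$ and \textbf{Top}.

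For the 2-out-of-6 property I would use the standard reduction to 2-out-of-3. Given composable $f,g,h$ with $gf$ and $hg$ homotopy equivalences, the morphism $g$ acquires a right homotopy inverse $f(gf)^{-1}$ (since $g\cdot f(gf)^{-1}\sim1$) and a left homotopy inverse $(hg)^{-1}h$ (since $(hg)^{-1}h\cdot g\sim1$); because a morphism with both a left and a right homotopy inverse is a homotopy equivalence, as $(hg)^{-1}h\sim(hg)^{-1}h\cdot g\cdot f(gf)^{-1}\sim f(gf)^{-1}$ by congruence, $g$ is an equivalence. Applying 2-out-of-3 to the pairs $gf,g$ and $hg,g$ then shows $f$ and $h$ are equivalences, and $hgf$ is a composite of equivalences. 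This argument again uses only that homotopy is a congruence, so it holds in each component.

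The remaining, and I expect the only delicate, point is assembling the componentwise homotopy inverses into a genuine \textbf{CellRes} morphism: the pair $(\mathbf{g},g)$ built from a chain homotopy inverse $\mathbf{g}$ and a topological homotopy inverse $g$ must be a \emph{compatible} pair before it counts as a morphism of $K$. I would handle this by choosing the topological inverse first, using the cellular approximation theorem so that $g$ may be taken cellular, and then taking the compatible chain map induced by $g$ (as guaranteed by the earlier remark that a cellular map determines a compatible chain map) and checking it is the required chain homotopy inverse; compatibility of $(\mathbf{g},g)$ then holds by construction, since $g$ and $\mathbf{g}$ act the same way on labels and generators. Once this is secured, the componentwise reduction finishes the proof: every intermediate morphism produced above ($f'g'$, $hg$, $f(gf)^{-1}$, $(hg)^{-1}h$, and $hgf$) is again a compatible pair whenever its factors are, so each lands back in $K$.
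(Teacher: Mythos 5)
Your main line---composition and the homotopy relation in \textbf{CellRes} are computed componentwise, so the classical bookkeeping for 2-out-of-3 and 2-out-of-6 with homotopy inverses can be run simultaneously in $\mathcal{C}_{\bullet}(\mathbf{Mod}_S)$ and \textbf{Top}---is exactly the paper's approach (the paper's proof is a two-sentence version of it), and your explicit inverse formulas are all correct. The genuine problem is your ``delicate point'' and its proposed fix. You assert that $(\mathbf{f},f)\in K$ exactly when $\mathbf{f}$ is a chain homotopy equivalence and $f$ is a topological homotopy equivalence, and to get the nontrivial direction you take a cellular approximation $g$ of a topological inverse and invoke ``the compatible chain map induced by $g$.'' But the remark you cite only identifies which chain map a given cellular map could be compatible with; it does not assert that one exists, and the paper's Example \ref{notmorph} (the cellular map $x\mapsto x^2$, $y\mapsto y^2$, $z\mapsto z^2$) is precisely a cellular map admitting no compatible chain map. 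So this step fails as stated; moreover, even when the induced chain map does exist, ``checking it is the required chain homotopy inverse'' is a real verification that you do not carry out.

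Fortunately the biconditional is never needed, and your closing sentence already contains the correct repair: take $K$ to be the morphisms admitting a two-sided homotopy inverse \emph{in} \textbf{CellRes}, and note that every inverse your argument must exhibit---$f'g'$, $hg$ (with $h$ the given compatible inverse of $gf$), $f(gf)^{-1}$, $(hg)^{-1}h$, $gv$, and the inverse of $hgf$---is a composite of given \textbf{CellRes} morphisms and their given compatible inverses, hence automatically a compatible pair; the required homotopies then hold componentwise by the congruence property, and homotopy of pairs is by definition componentwise. Deleting the cellular-approximation paragraph and running the whole argument intrinsically in this way yields a complete proof, and in fact a more careful one than the paper's, since the paper never addresses how componentwise inverses assemble into \textbf{CellRes} morphisms at all.
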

\begin{proof}
This follows from that in {\bf Top} and $\mathcal{C}_{\bullet}({\bf Mod}_S)$ category, these homotopy properties are satisfied. Hence we get that the homotopies in {\bf CellRes} also satisfy it as both components satisfy it.  \end{proof}

\begin{proposition}
{\bf CellRes} is a homotopical category.
\end{proposition}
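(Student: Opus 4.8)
The plan is to exhibit {\bf CellRes} together with a distinguished class $W$ of morphisms and verify the axioms of a homotopical category in the sense of Dwyer--Hirschhorn--Kan--Smith: a category equipped with a class $W$ of \emph{weak equivalences} that contains every identity morphism and satisfies the 2-out-of-6 property. For the class $W$ I would take the homotopy equivalences of cellular resolutions, namely those morphisms $({\bf f},f)$ for which ${\bf f}$ is a chain homotopy equivalence in $\mathcal{C}_{\bullet}({\bf Mod}_S)$ and $f$ is a homotopy equivalence in {\bf Top}. This is exactly the class of ``homotopy maps'' appearing in the preceding proposition.

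First I would check that $W$ contains every identity morphism. An identity is $id_{\mathcal{F}}=({\bf id}_{\mathcal{F}},id_X)$, and since the identity chain map is a chain homotopy equivalence and the identity continuous map is a topological homotopy equivalence, each identity lies in $W$; the same componentwise argument shows $W$ contains all isomorphisms of {\bf CellRes}. Next I would invoke the preceding proposition directly: it establishes that the homotopy maps satisfy the 2-out-of-6 property (and 2-out-of-3). Since the definition of a homotopical category asks precisely that the weak equivalences contain the identities and satisfy 2-out-of-6, combining these two observations shows that $({\bf CellRes},W)$ is a homotopical category; closure of $W$ under composition and the inclusion of all isomorphisms then follow formally from 2-out-of-6 together with containment of the identities.

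The only genuine point needing care — and the step I would treat most carefully — is confirming that $W$ behaves well componentwise, i.e. that a pair $({\bf f},f)$ is a homotopy equivalence in {\bf CellRes} exactly when both ${\bf f}$ and $f$ are homotopy equivalences in their respective categories, and that this class is preserved under the composition of compatible pairs. This reduces to the fact that homotopy in {\bf CellRes} was defined componentwise, so that the chain-complex factor and the topological factor may be analysed independently. Once this is established, the two homotopical-category axioms are inherited verbatim from the corresponding structures on $\mathcal{C}_{\bullet}({\bf Mod}_S)$ and {\bf Top}, and the proposition follows.
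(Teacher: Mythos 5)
Your proposal is correct and follows essentially the same route as the paper: take the weak equivalences to be the (componentwise) homotopy equivalences, observe they contain the identities and isomorphisms, and inherit the closure axioms componentwise from {\bf Top} and $\mathcal{C}_{\bullet}({\bf Mod}_S)$ via the preceding proposition. If anything, you are slightly more careful than the paper, which invokes only the 2-out-of-3 property where the Dwyer--Hirschhorn--Kan--Smith definition asks for 2-out-of-6; both are supplied by the preceding proposition, so the difference is cosmetic.
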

\begin{proof}
Take the weak equivalences to be the homotopies defined above. Then this class of morphims contains identities and isomorphism. Moreover, the homotopy in both {\bf Top} and $\mathcal{C}_{\bullet}({\bf Mod}_S)$ satisfy the 2-out-of-3 property so, then does the homotopy in {\bf CellRes}.
\end{proof}

Another one of the properties {\bf CellRes} inherits from {\bf Top} is the enrichment by simplicial sets. Recall that the enrichment by a monoidal category was defined in the Section \ref{catsection}, Definition \ref{enrich}. 
\begin{proposition}
\label{enri}
The category {\bf CellRes} can be enriched with simplicial sets.
\end{proposition}
\begin{proof}
We want to show that for any pair of cellular resolutions $F$ and $G$ we can assign a simplicial set $sS(F,G)$. We know that the category {\bf Top} can be enriched with simplicial sets by taking for any pair $X$, $Y$ the simplicial set where the 0-simplices are the maps between $X$ and $Y$, the 1-simplices are the homotopies between the maps, and the higher simplices are the higher homotopies. 

We can defined the simplicial set in the same way in {\bf CellRes}. For any pair $F$ and $G$ take the 0-simplices to be the morhisms between them, the 1-simplices are the homotopies, and the higher homotopies are the higher simplices.

Then the above definition inherits the properties of enriched category from {\bf Top}.
\end{proof}

\begin{proposition}
The kernel (and cokernel) for the maps in $\bf{CellRes}$ do not exist in  general. 
\end{proposition}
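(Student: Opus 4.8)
The plan is to argue by counterexample, following the same principle that governs the earlier sections: a categorical construction persists in {\bf CellRes} only when the chain-complex version and the topological version essentially coincide, and for kernels and cokernels they do not. The concrete obstruction I would emphasize is \emph{non-freeness}, since the objects of {\bf CellRes} are assembled from the free modules $S(-{\bf a}_F)$ indexed by the faces of a regular CW-complex, whereas (co)kernels in $\mathcal{C}_\bullet({\bf Mod}_S)$ are computed degreewise and generically leave the class of free modules.

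First I would recall that in $\mathcal{C}_\bullet({\bf Mod}_S)$ the kernel and cokernel of a chain map are taken degreewise, as the module kernel (resp.\ cokernel) of each component $f_i$. For the cokernel the failure is immediate: a morphism whose degree-zero chain component is multiplication by a variable, as in Example \ref{ex2}, has degreewise cokernel $S/(x)$, which is not free. For the kernel I would take $S = k[x,y,z]$ and exhibit a morphism $({\bf f},f)$ one of whose chain components is $S^3 \xrightarrow{(x,y,z)} S$; its kernel is the module of first syzygies of $(x,y,z)$, which has projective dimension $1$ over $S$ and so is not free. In both cases the resulting complex carries a non-free term, hence cannot be the cellular complex $\mathcal{F}_Z$ of any labeled cell complex $Z$, since those are free by definition. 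This already shows that the construction inherited from the ambient chain-complex category falls outside {\bf CellRes}.

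To turn this into the statement that no object of {\bf CellRes} can serve as the (co)kernel, I would invoke the topological side and the compatibility condition, which is also where I expect the real difficulty to lie. Unwinding the definition of a compatible pair, the zero chain map ${\bf f}=0$ forces the induced label map to vanish, which in turn forces the cellular map to send no face anywhere; by regularity this can happen only when the source complex is empty. Thus {\bf CellRes} has no zero morphism out of a nonempty object, so it is not a pointed category and the equalizer-with-zero that defines a kernel (dually a cokernel) cannot even be set up for general pairs of objects; this matches the fact that {\bf Top} itself has no zero object. The plan is then to combine the two strands: the freeness computation shows that any candidate (co)kernel, tested on the chain level against the maps above, would have to carry a non-free module and hence cannot be a cellular resolution, while the pointedness discussion shows that the defining universal property is not even expressible in general. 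Assembling both yields that kernels and cokernels do not exist in {\bf CellRes} in general, the main obstacle being the careful exclusion of \emph{every} candidate object rather than only the naive degreewise one.
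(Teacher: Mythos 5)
Your first strand is essentially the paper's proof: the paper argues that a kernel in {\bf CellRes} would have to be the degreewise kernel at the chain-complex level, that (co)kernels of maps of free $S$-modules need not be free, and hence that the candidate cannot be the cellular complex of any labeled cell complex; you merely add explicit witnesses (the first syzygy module of $(x,y,z)$, which has projective dimension $1$, and the non-free cokernel of multiplication by a variable), which the paper leaves implicit. Your second strand is genuinely not in the paper, and it is sharper than you give it credit for: the paper itself writes the kernel condition as $({\bf f},f)\circ({\bf k},k)=0$, demanding in particular $f\circ k=0$ on the topological side, and your pointedness observation shows exactly this to be vacuous, since a zero chain map is compatible with a cellular map only when the source complex is empty (for this you do not need regularity, only that a cellular map is a total continuous map, so every face must land somewhere). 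This second strand is also what addresses the gap that the paper and your first strand share: the identification of a would-be categorical kernel in {\bf CellRes} --- whose universal property quantifies only over cellular resolutions --- with the chain-level kernel is asserted by the paper (``this means that $\mathcal{K}$ is the kernel also as a chain complex'') but not justified, whereas your argument excludes every nonempty candidate at once. The trade-off: the paper's route conveys the structural obstruction (freeness) but rests on that unjustified identification; your route is airtight under the paper's own definition of kernel, though the non-existence it establishes is of a degenerate kind (the defining property has no instances out of nonempty objects), and taken seriously it renders your first strand logically redundant --- so in a final write-up you should present strand 2 as the proof and strand 1 as the explanation of why the obstruction is robust rather than an artifact of the definition.
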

\begin{proof}
Let $({\bf f},f):\mathcal{F}\rightarrow\mathcal{G}$ be  a morphism of cellular resolutions.
By definition, the kernel of a morphism is an object $\mathcal{K}$ and a map $({\bf k}, k):\mathcal{K}\rightarrow\mathcal{F}$, such that $({\bf f},f)\circ ({\bf k},k)=0$.

From the definition of composition of morphisms in ${\bf CellRes}$, we have that ${\bf f}\circ{\bf k}=0$ and $f\circ k=0$. In particular, this means that $\mathcal{K}$ is the kernel also as a chain complex, and as a requirement for the cellular resolutions it should be a free module. However, we know that the kernel of a module map of free modules is not necessarily free. Thus the kernel $\mathcal{K}$ may not even be a chain complex of free modules.

Similarly, cokernels do not always exist as the map on the level of modules does not give a free module $S^n$ as the cokernel at homological degree  in the chain complex in most cases. Again the examples with cokernel existing in the category $\bf{CellRes}$ are from cases with multiple connected components. \end{proof}

\begin{remark}
Because the category does not contain all the kernels and cokernels we know that it is not abelian. 
\end{remark}

We have two important forgetful functors from the category of cellular resolutions, one to chain complexes and one to topological spaces.

\begin{definition}
Let $\Phi: {\bf CellRes}\rightarrow\mathcal{C}_{\bullet}(\bf{Mod}_S)$ be a covariant functor taking a resolution 
$$\mathcal{F}_X:\ 0\leftarrow F_0\leftarrow F_1\leftarrow\ldots\leftarrow F_n \leftarrow\ldots\rightarrow0$$
to a chain complex $\mathcal{F}$ with the same $S$-modules and differential maps as in $\mathcal{F}_X$.
The morphism $({\bf f},f)$ between two cellular resolutions $\mathcal{F}_X$ and $\mathcal{F}_Y$ gets mapped to the chain map ${\bf f}$ under $\Phi$.
\end{definition}

\begin{definition}
 Let $\Psi:\bf{CellRes}\rightarrow\bf{Top}$ be a covariant functor, given by mapping $\mathcal{F}$ to the unlabeled cell complex $X$ supporting the cellular resolution. Then $\Psi$ maps a morphisms $({\bf f},f)$ to the cellular map $f$.
\end{definition}

\begin{proposition}
The forgetful functors $\Phi$ and $\Psi$  preserve weak equivalences, that is, they are homotopical.
\end{proposition}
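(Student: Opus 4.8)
The plan is to reduce the claim to the purely formal observation that the homotopy relation on morphisms of {\bf CellRes} is defined one component at a time, and that $\Phi$ and $\Psi$ are precisely the two coordinate projections. Recall that the weak equivalences of {\bf CellRes} are its homotopy equivalences: a morphism $({\bf f},f)\colon\mathcal{F}\to\mathcal{G}$ is a weak equivalence exactly when there is a morphism $({\bf g},g)\colon\mathcal{G}\to\mathcal{F}$ with $({\bf f},f)\circ({\bf g},g)\sim\mathrm{id}_{\mathcal{G}}$ and $({\bf g},g)\circ({\bf f},f)\sim\mathrm{id}_{\mathcal{F}}$. By the definition of homotopy of morphisms in {\bf CellRes}, each of these homotopies is a homotopy in both coordinates, so the condition unwinds to saying that ${\bf f}$ is a chain homotopy equivalence (with homotopy inverse ${\bf g}$) and that $f$ is a homotopy equivalence of spaces (with homotopy inverse $g$).

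First I would record that $\Phi$ and $\Psi$ carry the componentwise homotopy relation to the homotopy relations of the target categories: if $({\bf f},f)\sim({\bf f}',f')$ in {\bf CellRes}, then ${\bf f}\sim{\bf f}'$ as chain maps and $f\sim f'$ as continuous maps, so $\Phi$ sends homotopic morphisms to homotopic chain maps and $\Psi$ sends homotopic morphisms to homotopic continuous maps. This is immediate from the definitions, but it is the only fact actually used.

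Next, given a weak equivalence $({\bf f},f)$ with homotopy inverse $({\bf g},g)$, I would apply $\Phi$. Since $\Phi$ is a functor it sends the two composites to ${\bf f}\circ{\bf g}$ and ${\bf g}\circ{\bf f}$, and since $\Phi$ preserves the homotopy relation these are homotopic to $\Phi(\mathrm{id}_{\mathcal{G}})=\mathrm{id}$ and $\Phi(\mathrm{id}_{\mathcal{F}})=\mathrm{id}$ respectively. Hence ${\bf f}$ is a chain homotopy equivalence, i.e. a weak equivalence in $\mathcal{C}_{\bullet}({\bf Mod}_S)$. Running the identical argument through $\Psi$ shows that $f$ is a homotopy equivalence of spaces, i.e. a weak equivalence in {\bf Top}.

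Since essentially nothing beyond the componentwise definitions is required, there is no substantive obstacle here; the one point worth stating explicitly is that the homotopy of morphisms in {\bf CellRes} was defined so as to demand a homotopy in each factor separately, which is exactly what lets the coordinate projections $\Phi$ and $\Psi$ descend to the homotopy categories and thereby preserve weak equivalences.
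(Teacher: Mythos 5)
Your argument is correct and is exactly the unwinding of the paper's own (one-line) proof, which simply observes that the claim follows from the definitions of the functors and of the weak equivalences on {\bf CellRes}. Since homotopy of morphisms in {\bf CellRes} is defined componentwise and $\Phi$, $\Psi$ are the coordinate projections, your more explicit verification adds detail but no new idea.
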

\begin{proof}
This follows directly from the definition of the functors and the weak equivalences on {\bf CellRes}.\end{proof}

\section{Mapping cones and cylinders}
The mapping cone (and mapping cylinder) construction for chain complexes is modeled after the mapping cones (and cylinders) for topological spaces. This means that the two constructions are similar and one would expect both of them to work for cellular resolutions, which is indeed the case.
\subsection{ The mapping cone}
 Since we have that mapping cones are very similar in $\mathcal{C}_{\bullet}(\bf{Mod}_S)$ and {\bf Top}, we can use the definition of $\mathcal{C}_{\bullet}(\bf{Mod}_S)$ on cellular resolutions as well.
\begin{definition}
Let $\mathfrak{h}=({\bf h},h): \mathcal{F}\rightarrow \mathcal{G}$ be a morphism between two cellular resolutions. Then the mapping cone $C(\mathfrak{h})$ is the chain complex $C(\mathfrak{h})_i=\mathcal{G}_i\oplus \mathcal{F}_{i-1}$ for $i\geq2$ or $i=0$ with the differential $d_i(g,f)=({\bf h}(f)+d_i(g),-d_{i-1}(f))$, and $C(\mathfrak{h})_1=\mathcal{G}_1\oplus S(-\deg h_0({\bf 1}))$.

\end{definition}
\begin{remark}
Here $\mathcal{F}_i$ in the mapping cone is equal to $\mathcal{F}_i$ only in the case where we do not write down the grading for free modules in the resolution. If the grading is considered then we have a free module for each component of $\mathcal{F}_i$ with generators given by the differentials.
\end{remark}

\begin{proposition}
\label{rescone}
The mapping cone of cellular resolutions is in the category of {\bf CellRes}.
\end{proposition}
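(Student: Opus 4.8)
The plan is to exploit the deliberate parallel between the topological and algebraic mapping cone constructions: I will build a labeled regular CW-complex whose associated cellular chain complex is exactly $C(\mathfrak{h})$, and then check acyclicity. First I would take the topological mapping cone $C_h$ of the underlying cellular map $h\colon X\rightarrow Y$. Its cells are the cells of $Y$ together with one cone cell of dimension $i+1$ for each $i$-cell of $X$ (plus the cone point), so that at the level of generators $C(\mathfrak{h})_i=\mathcal{G}_i\oplus\mathcal{F}_{i-1}$ is reproduced. I would then label the cells inherited from $Y$ by their existing labels, and label the cone cell over a face $\sigma$ of $X$ using the degree data of $\mathfrak{h}$, arranged so that the bottom of the complex carries the shift $-\deg h_0(\mathbf 1)$ appearing in the definition of $C(\mathfrak{h})_1$.

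Next I would verify that the cellular chain complex of this labeled complex agrees with $C(\mathfrak{h})$, differential and all. The boundary of a cone cell over $\sigma$ splits into the coned boundary of $\sigma$ (living among the other cone cells) and the base $h(\sigma)$ (living in $Y$); by Proposition \ref{idk} the sign function is well defined on the regular complex, and the compatibility of the pair $({\bf h},h)$ is precisely the condition guaranteeing that the $Y$-component of this boundary reproduces ${\bf h}$ while the coned part reproduces $-d$. This yields the differential $d_i(g,f)=({\bf h}(f)+d(g),-d(f))$ of the stated mapping cone.

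For acyclicity I would use the long exact sequence in homology of the mapping cone. Since $\mathcal{F}$ and $\mathcal{G}$ are cellular resolutions, $H_i(\mathcal{F})=H_i(\mathcal{G})=0$ for all $i\geq 1$, and the long exact sequence forces $H_i(C(\mathfrak{h}))=0$ for every $i\geq 2$ at once; the remaining piece reduces to showing that $H_1(C(\mathfrak{h}))=\ker\big({\bf h}_\ast\colon H_0(\mathcal{F})\rightarrow H_0(\mathcal{G})\big)$ vanishes, which is exactly what the degree-$1$ modification $C(\mathfrak{h})_1=\mathcal{G}_1\oplus S(-\deg h_0(\mathbf 1))$ is engineered to achieve. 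Equivalently, one can run the criterion of Proposition \ref{resmodule} and check that $(C_h)_{\preceq\mathbf b}$ is acyclic over $k$ for every $\mathbf b\in\mathbb{N}^n$, reducing degreewise to the acyclicity of $X_{\preceq\mathbf b}$ and $Y_{\preceq\mathbf b}$.

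The main obstacle I expect is twofold and concentrated at the two ``endpoints'' of the construction. Topologically, a naive cone on $X$ need not be a regular CW-complex, so I anticipate having to replace it by the mapping cylinder of $h$ followed by a collapse (mirroring the cylinder-then-cone picture in the chain-complex definitions) in order to keep all attaching maps embeddings and stay inside the regular CW setting required by Proposition \ref{idk}. Algebraically, the delicate point is the degree-$1$ behaviour: one must check that the shift $S(-\deg h_0(\mathbf 1))$ genuinely makes the induced map on $H_0$ injective so that $H_1(C(\mathfrak{h}))=0$, rather than its being merely the expected grading bookkeeping. Everything above degree one is then automatic from the long exact sequence.
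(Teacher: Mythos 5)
Your overall strategy is the same as the paper's: identify $C(\mathfrak{h})$ with the cellular chain complex of the topological mapping cone of $h$ to get the cellular structure, and treat acyclicity as a homological statement about the cone of a map of resolutions. Replacing the paper's element chase by the long exact sequence of the cone is essentially cosmetic; in homological degrees $i\geq 2$ the two arguments are the same and both work.

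The genuine gap is the one you flag but do not close: $H_1(C(\mathfrak{h}))\cong\ker\bigl(\mathbf{h}_*\colon H_0(\mathcal{F})\to H_0(\mathcal{G})\bigr)$, and this kernel need not vanish for an arbitrary morphism in {\bf CellRes}. Writing $m=h_0(\mathbf{1})$, the induced map is $S/I\xrightarrow{\cdot m}S/J$ with kernel $(J:m)/I$; compatibility of $(\mathbf{h},h)$ only yields $I\subseteq (J:m)$, not equality. Concretely, let $X$ be a single vertex labeled $x$ and $Y$ an edge with vertices labeled $x^2$ and $xy$, with $h$ sending the vertex to the vertex labeled $x^2$, $f_0$ multiplication by $x$, and $f_1(e_x)=e_{x^2}$; this is a compatible pair, yet $(J:m)=(x,y)\neq (x)=I$, so the cone has $H_1\neq 0$ and is not a resolution. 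Your suspicion about the degree-one term is well founded: $S(-\deg h_0(\mathbf{1}))$ is just $\mathcal{F}_0=S$ with a shifted grading, so it changes nothing homologically and cannot ``engineer'' injectivity on $H_0$. To complete the proof you must either add the hypothesis that $\mathbf{h}_*$ is injective on $H_0$ (equivalently $(J:m)=I$, as in the Herzog--Takayama linear-quotient setting the paper invokes later) or restrict the class of morphisms. Note that the paper's own proof has the identical blind spot: its element chase deduces $f\in\operatorname{im}d$ from $d(f)=0$ by acyclicity of $\mathcal{F}$, which is valid only for $i\geq 2$; at $i=1$ the element $f$ lies in $\mathcal{F}_0$ and the argument says nothing. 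Your secondary concern about regularity of the cone as a CW-complex is also legitimate and likewise unaddressed in the paper, but it is the $H_1$ point that actually breaks the statement as written.
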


\begin{proof}
 Let $C(\mathfrak{h})$ be the mapping cone coming from $\mathfrak{h}\colon\mathcal{F}\rightarrow \mathcal{G}$. We have $C(\mathfrak{h})_i=\mathcal{G}_i\oplus \mathcal{F}_{i-1}$, which is a free module as both $\mathcal{G}_i$ and $\mathcal{F}_{i-1}$ are free modules. The differential $d_i(g,f)=({\bf h}(f)+d_i(g),-d_{i-1}(f))$ satisfies $\operatorname{ker} d_i\supseteq \operatorname{im} d_{i+1}$ as it is same as for chain complexes.  To show the other inclusion, we note that $(g,f)\in \operatorname{ker} d_i$ implies that $f$ is in the image of $d_i:\mathcal{F}_{i+1}\rightarrow \mathcal{F}_i$ and hence by the chain map property of commutative squares for {\bf h}, we have that ${\bf h}(f)$ is in the image for $d_i$ for $\mathcal{G}$. We can write ${\bf h}(f)=d_i(\psi(f'))$, and we get that $d_i({\bf h}(f'))+d_i(g)=0$. Then the fact $d_i$ is a differential in a cellular resolution implies that ${\bf h}(f')+g\in \operatorname{im} d_{i+1}$, so $g=d_{i+1}(g')-{\bf h}(f')$. Hence it is in the image of ${\bf h}+d_{i+1}$.
 
Let $X$ and $Y$ be the cell complexes of $\mathcal{F}$ and of $\mathcal{G}$, respectively. Consider the mapping cone for the associated cell complexes by the map $h$. It is the cell complex where we have all of $Y$, a single point coming from the complex $X$, and then a $(i+1)$-dimensional cell for each $i$ dimensional cell in $X$. Then the cellular complex $M$ for the mapping cone is 
$M_0= \mathcal{G}_0$, $M_1=\mathcal{G}_1\oplus S(-\deg h_0({\bf 1}))$ and higher degrees $M_i=\mathcal{G}_i\oplus \mathcal{F}_{i-1}$, up to the same abuse of notation as in the definition. So we se that this is indeed the same as the chain complex mapping cone, and so it has a cellular structure. 
\end{proof}
\begin{remark}
This mapping cone is in most cases not minimal, and can be very far from it. 
\end{remark}
\begin{remark}
If the map $\psi$ has the identity map $\psi_0:\mathcal{F}_0\rightarrow \mathcal{G}_0$, then the mapping cone will contain label 1. 
\end{remark}

\begin{proposition}
The mapping cone for cellular resolutions corresponds to the mapping cones in $\mathcal{C}_{\bullet}({\bf Mod}_S)$ and {\bf Top} via the forgetful functor.
\end{proposition}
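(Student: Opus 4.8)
The plan is to read the statement off directly from the way the mapping cone $C(\mathfrak{h})$ was constructed in the preceding definition and from the analysis already carried out in the proof of Proposition \ref{rescone}. Recall that $\Phi$ forgets the labels and the supporting complex, retaining only the $S$-modules and the differentials, while $\Psi$ forgets the algebraic data and retains only the underlying cell complex; on morphisms $\Phi(\mathfrak{h})={\bf h}$ and $\Psi(\mathfrak{h})=h$. So the two assertions to be proved amount to the equalities $\Phi(C(\mathfrak{h}))=C({\bf h})$ in $\mathcal{C}_{\bullet}({\bf Mod}_S)$ and $\Psi(C(\mathfrak{h}))=C_h$ in {\bf Top}.

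For the first assertion I would simply observe that $C(\mathfrak{h})$ was defined by the formula $C(\mathfrak{h})_i=\mathcal{G}_i\oplus\mathcal{F}_{i-1}$ with differential $d_i(g,f)=({\bf h}(f)+d_i(g),-d_{i-1}(f))$, which is verbatim the definition of the mapping cone of the chain map ${\bf h}$ in $\mathcal{C}_{\bullet}({\bf Mod}_S)$. Since $\Phi$ alters nothing about the modules or differentials, $\Phi(C(\mathfrak{h}))=C(\Phi(\mathfrak{h}))=C({\bf h})$ on the nose; the only point needing a line of care is the degree-one correction $C(\mathfrak{h})_1=\mathcal{G}_1\oplus S(-\deg h_0({\bf 1}))$, which one checks is the graded refinement of the same formula.

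For the second assertion I would invoke the explicit cell structure exhibited in the proof of Proposition \ref{rescone}: the complex supporting $C(\mathfrak{h})$ consists of all of $Y$, a single cone point, and an $(i+1)$-cell for every $i$-cell of $X$, with attaching maps coming from $h$. This is precisely the CW structure on the topological mapping cone $C_h=(X\times[0,1])\sqcup_h Y$ obtained from the product cell structure on $X\times[0,1]$ after collapsing $X\times\{0\}$ to the cone point and gluing $X\times\{1\}$ along $h$: the $(i-1)$-cells of $C_h$ are the $(i-1)$-cells of $Y$ together with the $(i-1)$-cells arising from $(i-2)$-cells of $X$, matching the homological degree-$i$ term $\mathcal{G}_i\oplus\mathcal{F}_{i-1}$ under the convention that homological degree $i$ corresponds to cell dimension $i-1$. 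Hence $\Psi(C(\mathfrak{h}))=C_h=C_{\Psi(\mathfrak{h})}$.

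The main thing to verify carefully, and the only place where the argument is more than bookkeeping, is that these two identifications are compatible with one another: that the cellular chain complex of $C_h$ really reproduces the degree shift and the orientation-induced signs $\operatorname{sign}(G,F)$ of the algebraic mapping cone $C({\bf h})$, and that the product-and-collapse construction yields a \emph{regular} CW-complex so that $\Psi(C(\mathfrak{h}))$ is a legitimate object of {\bf Top} supporting the resolution. Both of these were already forced in Proposition \ref{rescone}, where the cellular complex $M$ of the topological cone was shown to agree termwise with $C(\mathfrak{h})$; so here it suffices to remark that $\Phi$ and $\Psi$ are defined by forgetting exactly the data that the two mapping-cone constructions share, whence the correspondence follows.
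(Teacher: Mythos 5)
Your proposal is correct and follows essentially the same route as the paper: both arguments reduce to observing that the algebraic part of $C(\mathfrak{h})$ is verbatim the chain-complex mapping cone of ${\bf h}$, and that the cell structure already exhibited in the proof of Proposition \ref{rescone} (all of $Y$, a cone point, and an $(i+1)$-cell for each $i$-cell of $X$) is exactly the CW structure of the topological mapping cone, so the two forgetful functors recover the respective constructions. Your version is somewhat more careful than the paper's --- in particular in flagging the degree-one graded correction and the sign/regularity compatibility --- but the underlying argument is the same.
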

\begin{proof}
This construction does match the topological construction for the mapping cone. This is because the cell complex associated to the mapping cone only adds one point for each connected component of $\mathcal{F}$ and  a number of other faces depending on the maps involved,  to the  cell complex of $\mathcal{G}$. Algebraically this can be seen from the fact that the free module in the homological degree 1 is $\mathcal{G}_1\oplus \mathcal{F}_0$, where generators correspond to the points in the cell complex. \end{proof}

\begin{example}
\label{conex}
Let us consider the cellular resolutions in Figure \ref{coneex}, with  $\mathcal{F}$ the cellular resolution
$$0\leftarrow S\leftarrow S^3\leftarrow S^3\leftarrow S\leftarrow0$$
coming from the complex $X$ in the Figure \ref{coneex},
and $\mathcal{G}$ 
$$0\leftarrow S\leftarrow S^3\leftarrow S^2\leftarrow 0$$ from the complex $Y$ in the same figure. Both have the ideal $(ab,bc,cd)\subset S=k[a,b,c,d]$ as their label ideal. The map between the two is identity, and it embeds the minimal resolution to the non-minimal one. Then using the definition of mapping cones we get the resolution 
$$0\leftarrow S^3\oplus S\leftarrow S^3\oplus S^3\leftarrow S\oplus S^2\leftarrow 0,$$
with the maps shown in the Figure \ref{coneres}.
\begin{figure}
\begin{center}
\includegraphics[scale=1]{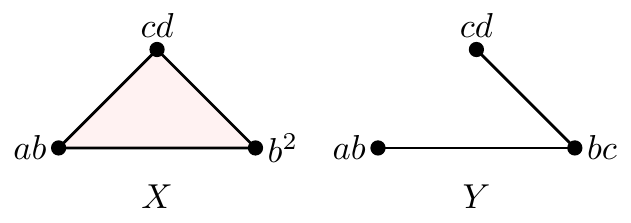}
\end{center}
\caption{Cell complexes of the cellular resolutions in Example \ref{conex}.}
\label{coneex}
\end{figure}
\begin{figure}
\begin{center}
\includegraphics[scale=1]{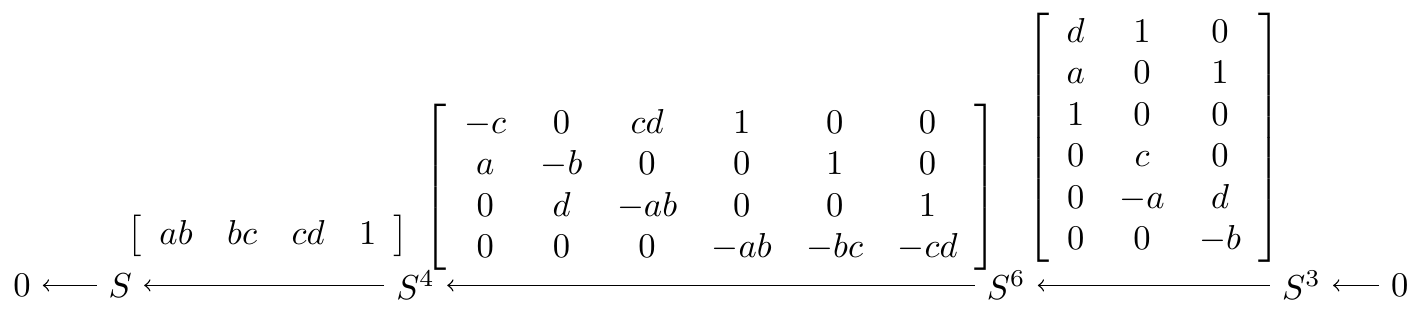}
\end{center}
\caption{Cellular resolution of Example \ref{conex}.}
\label{coneres}
\end{figure}
The labeled cell complex associated to this is then the cell complex in Figure \ref{cone}.
\begin{figure}

\begin{center}
\includegraphics[scale=1]{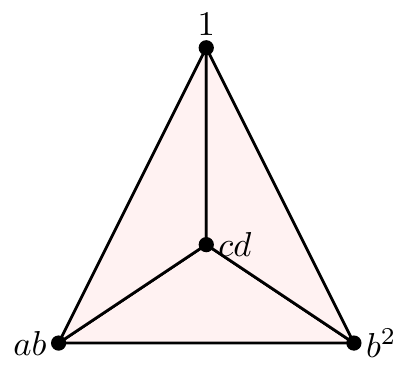}
\end{center}
\caption{The mapping cone of $X$ and $Y$.}
\label{cone}
\end{figure}
We can see that this cell complex is the same as the topological mapping cone of the embedding of $X$ to $Y$.
\end{example}

This "adding a point" property can then be used to construct cellular resolutions from known ones. For the next result we consider cellular resolutions with connected cell complex.

\begin{corollary}
Let $\mathcal{F}$ be a cellular resolution for the module $S/I$.
If a map $\mathfrak{f}:\mathcal{F}\rightarrow \mathcal{G}$ has $f_0$ being a multiplication by a monomial $m$, then the mapping cone $C(\mathfrak{f})$ will give the resolution of $S/(I,m)$.
\end{corollary}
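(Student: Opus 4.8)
The plan is to reduce the statement to two facts already in hand. By Proposition \ref{rescone} the mapping cone $C(\mathfrak{f})$ is again an object of {\bf CellRes}, hence an acyclic complex of free modules whose homology is concentrated in degree $0$; and by Proposition \ref{resmodule} such a complex resolves $S$ modulo the ideal generated by the vertex labels of its underlying cell complex. So the only real content is the identification of this degree-$0$ cokernel, and I would attack it in two mutually reinforcing ways.

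First, the direct algebraic computation. We have $C(\mathfrak{f})_0 = \mathcal{G}_0 = S$ and $C(\mathfrak{f})_1 = \mathcal{G}_1 \oplus S(-\deg f_0({\bf 1}))$, where the extra summand is $\mathcal{F}_0$ and $f_0({\bf 1}) = m$. Reading off the cone differential $d_1(g,e) = ({\bf f}(e) + d_1^{\mathcal{G}}(g),\, -d_0^{\mathcal{F}}(e))$ for $e \in \mathcal{F}_0$, and using $d_0^{\mathcal{F}} = 0$ together with $f_0 = \cdot m$, the image of $d_1$ is $\operatorname{im} d_1^{\mathcal{G}} + (m)$. Since $\mathcal{G}$ carries the label ideal $I$, so that $\operatorname{im} d_1^{\mathcal{G}} = I$ (this is the operative setting, e.g.\ the self-map case $\mathcal{G}=\mathcal{F}$), we get $\operatorname{im} d_1 = (I,m)$ and $H_0(C(\mathfrak{f})) = S/(I,m)$. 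Equivalently, I would run this through the short exact sequence of complexes $0 \to \mathcal{G} \to C(\mathfrak{f}) \to \mathcal{F}[-1] \to 0$, where $(\mathcal{F}[-1])_n = \mathcal{F}_{n-1}$: because $\mathcal{F}$ and $\mathcal{G}$ are acyclic in positive degrees, the long exact homology sequence collapses to $H_0(C(\mathfrak{f})) = \operatorname{coker}\big({\bf f}_* : H_0(\mathcal{F}) \to H_0(\mathcal{G})\big)$, and ${\bf f}_*$ is multiplication by $m$ on $S/I$, whose cokernel is $S/(I,m)$.

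Second, the topological confirmation, which is precisely the ``adding a point'' picture emphasized in this section. By the proposition identifying the cellular mapping cone with the topological one under the forgetful functor $\Psi$, the cell complex underlying $C(\mathfrak{f})$ is the topological mapping cone of the cellular map $f$: it is the base complex together with one new $0$-cell, the cone point, whose monomial label is $f_0({\bf 1}) = m$, plus higher cells of dimension $\geq 1$ obtained by coning the cells of $X$. Since only vertices contribute generators to the label ideal, the label ideal of $C(\mathfrak{f})$ is $(I,m)$, and Proposition \ref{resmodule} then gives that $C(\mathfrak{f})$ resolves $S/(I,m)$.

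The step I expect to be the crux is pinning down the degree-$0$ cokernel correctly: verifying that the sole effect of the cone on the label ideal is to adjoin the single generator $m$, and that the first differential of the base resolution has image exactly $I$. One should also track the internal $\mathbb{N}^n$-grading so that the new generator sits in degree $\deg m$. Finally, I would note that the acyclicity already supplied by Proposition \ref{rescone} forces $H_1$ of the cone to vanish, i.e.\ makes $m$ behave as a nonzerodivisor on $S/I$, so that no separate injectivity verification for ${\bf f}_*$ is required.
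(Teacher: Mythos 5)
Your second paragraph is precisely the paper's proof: the cone is a cellular resolution by Proposition \ref{rescone}, the single new vertex carries the label $f_0(\mathbf{1})=m$, so the vertex labels generate $(I,m)$ and Proposition \ref{resmodule} identifies the resolved module. The supplementary $H_0$ computation via $0\to\mathcal{G}\to C(\mathfrak{f})\to\mathcal{F}[-1]\to 0$ in your first paragraph is correct but not in the paper, and your parenthetical flag that one needs $\operatorname{im} d_1^{\mathcal{G}}=I$ (i.e.\ it is really the label ideal of $\mathcal{G}$, not of $\mathcal{F}$, that must be $I$) points at a genuine imprecision in the statement that the paper's own proof also glosses over.
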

\begin{proof}
Let $\mathfrak{f}:\mathcal{F}\rightarrow \mathcal{G}$ be a morphisms such that $f_0$ is a multiplication by a monomial $m$.
We know that the mapping cone is a cellular resolution by Proposition \ref{rescone}. Then the label on the "new" vertex of the mapping cone is given by the map $h_0$ acting on the element 1.  Then by definition of $h_0$, we get $h_0(1)=m$. 
Thus the labels on the cell complex of the mapping cone are $I\cup \{m\}$. Then by the Proposition \ref{resmodule}, we have that  the mapping cone $C(\mathfrak{f})$ is the resolution of $S/(I,m)$. 
\end{proof}

In particular, if one can construct a new morphism to the mapping cone with suitable monomial multiplication, iterating the above process can give specific cellular resolutions. However, in general finding these components proves challenging.

This kind of iterative behaviour of mapping cones was studied by Herzog and Taniyama in \cite{HT} for resolutions to construct minimal resolution in a purely algebraic setting. Later on Dochtermann and Mohammadi showed in \cite{DM} that the
 minimal free resolutions  from iterated mapping cones of \cite{HT} are cellular resolutions.
 
For completeness we state the results from \cite{HT} and \cite{DM}.
 
\begin{definition}
A monomial ideal $I\in S$ with a minimal set of generators $G(I)$ and an order $u_1,u_2,\ldots,u_m$ on the generators is said to have linear quotients if the colon ideal $(u_1,u_2,\ldots,u_{j-1}):u_j$ is generated by some subset of the variables $\{x_1,x_2,\ldots,x_n\}$ of $S$ for all $1\leq j\leq m$.

We define the set
$$set(u_j)=\{k\in[n]:x_k\in(u_1,u_2,\ldots,u_{j-1}):u_j\} \textrm{ for }j=1,\ldots,m.$$
\end{definition}

\begin{proposition}[\cite{HT}]
Let $I$ be an ideal with linear quotient with respect to the ordering $u_1,u_2,\ldots,u_m$, and let $I_j=(u_1,u_2,\ldots,u_j)$ and $L_j=I_j:u_{j+1}$.
We have the exact sequence
$$0\rightarrow R/L_j\rightarrow R/I_j\rightarrow R/I_{j+1}\rightarrow0.$$
The map $R/L_{j+1}\rightarrow R/I_j$ is  multiplication by $u_{j+1}$. Let $F^j$ denote the graded free resolution of $R/I_j$ and $K^j$ the Koszul complex of the regular sequence $x_{k_1},x_{k_2},\ldots,x_{k_l}$ with $k_i\in set(u_{j+1})$, and let $\psi^j:K^j\rightarrow F^j$ be a graded chain map lifting $\psi:R/L_j\rightarrow R/I_j$. Then the mapping cone $C(\psi^j)$ gives the free resolution of $R/I_{j+1}$. Iterating the process we get a graded free resolution of $R/I$. 
\end{proposition}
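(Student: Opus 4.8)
The plan is to recognize the proposition as an instance of the standard principle that the mapping cone of a lift of an injective module map resolves its cokernel, together with the observation that the linear quotients hypothesis hands us a Koszul resolution of $R/L_j$. To ease notation write $K=K^j$ and $F=F^j$. First I would pin down the short exact sequence. Since $I_{j+1}=I_j+(u_{j+1})$, the canonical surjection $R/I_j\twoheadrightarrow R/I_{j+1}$ has kernel $(I_j+(u_{j+1}))/I_j$; the map $R\to R/I_j$, $r\mapsto\overline{r\,u_{j+1}}$, has exactly this submodule as image and kernel $(I_j:u_{j+1})=L_j$, so it factors through an injection $\psi\colon R/L_j\hookrightarrow R/I_j$, namely multiplication by $u_{j+1}$, with $\operatorname{coker}\psi=R/I_{j+1}$. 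This is the displayed exact sequence.

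Next I would produce the free resolution $K$ of $R/L_j$. By the definition of linear quotients the colon ideal $L_j=I_j:u_{j+1}$ is generated by the variables $x_k$ with $k\in\operatorname{set}(u_{j+1})$. Distinct variables form a regular sequence in a polynomial ring, so the Koszul complex $K$ on these variables is a free resolution of $R/L_j$. Since $F$ is a free resolution of $R/I_j$ and the terms of $K$ are free, the comparison theorem for projective resolutions lifts $\psi$ to a homogeneous chain map $\psi^j\colon K\to F$ inducing $\psi$ on $H_0$.

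The heart of the argument is showing that $C(\psi^j)$ resolves $R/I_{j+1}$. With the cone from the paper's definition, $C(\psi^j)_i=F_i\oplus K_{i-1}$, there is a short exact sequence of complexes
$$0\longrightarrow F\longrightarrow C(\psi^j)\longrightarrow K[-1]\longrightarrow 0$$
given by $f\mapsto(f,0)$ and $(f,g)\mapsto g$. For a cycle $g\in K_{i-1}$ one computes $d_i(0,g)=(-\psi^j(g),0)$, so the connecting homomorphism $H_{i-1}(K)\to H_{i-1}(F)$ is $-\,(\psi^j)_*$. Because $K$ and $F$ are resolutions, $H_t(K)=H_t(F)=0$ for $t>0$, and the long exact sequence collapses: for $i\ge2$ the term $H_i(C(\psi^j))$ is squeezed between zeros; for $i=1$ it injects into $H_0(K)=R/L_j$ with image $\ker\psi=0$; and for $i=0$ one reads off $H_0(C(\psi^j))=\operatorname{coker}\psi=R/I_{j+1}$. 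Hence $C(\psi^j)$ is a free resolution of $R/I_{j+1}$. I expect the only delicate point to be the identification of the connecting homomorphism with $(\psi^j)_*$ and the attendant sign bookkeeping in the cone differential; the vanishing of $H_1$ is then forced by the injectivity of $\psi$, which is immediate from $L_j=I_j:u_{j+1}$.

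Finally I would iterate. Starting from the length-one resolution $F^1\colon 0\to R\xrightarrow{u_1}R\to0$ of $R/(u_1)$ and setting $F^{j+1}=C(\psi^j)$, after $m-1$ steps one reaches a graded free resolution of $R/I_m=R/I$. Homogeneity of each lift $\psi^j$ keeps the entire construction inside the graded category, so the output is a graded free resolution as claimed.
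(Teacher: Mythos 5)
Your proof is correct. Note that the paper itself states this proposition as a quoted result from Herzog--Takayama \cite{HT} and gives no proof of its own, so there is no in-paper argument to compare against; your write-up is the standard one (and matches the argument in the original source): the colon ideal $L_j$ is generated by variables by the linear-quotients hypothesis, hence Koszul-resolved; the lift exists by the comparison theorem; and the long exact sequence of $0\to F\to C(\psi^j)\to K[-1]\to 0$ collapses because $K$ and $F$ are resolutions and $\psi$ is injective (which kills $H_1$ of the cone), leaving $H_0=\operatorname{coker}\psi=R/I_{j+1}$. You also correctly read the statement's $R/L_{j+1}\rightarrow R/I_j$ as a typo for $R/L_j\rightarrow R/I_j$.
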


\begin{theorem}[\cite{DM}, Theorem 3.10]
If the ideal $I$ has linear quotients with respect to some order $u_1,u_2,\ldots,u_m$ and that the decomposition function is regular, then the minimum resolution obtained via iterated mapping cones is cellular and it is supported on regular CW-complex.
\end{theorem}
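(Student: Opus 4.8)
The plan is to prove the statement by induction on the number of generators, realizing each step of the Herzog--Takayama iterated mapping cone construction cellularly by means of the cellular mapping cone established in Proposition~\ref{rescone}. We already know from the Herzog--Takayama proposition above that, when the decomposition function is regular, the iterated mapping cones produce the minimal free resolution of $R/I$; what remains is to equip this resolution with a regular CW structure. First I would fix the explicit combinatorial model: index the free basis of the iterated mapping cone by pairs $(u_j,\sigma)$ with $\sigma\subseteq set(u_j)$, placing such a basis element in homological degree $|\sigma|+1$, so that the prospective cell complex $X$ has one vertex for each generator $u_j$ and one cell of dimension $|\sigma|$ for each such pair.

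The inductive step is the heart of the argument. Suppose $F^j$, the resolution of $R/I_j$, is already cellular and supported on a regular CW-complex $X_j$. The chain map $\psi^j\colon K^j\rightarrow F^j$ has as source the Koszul complex of the regular sequence indexed by $set(u_{j+1})$, which is supported on a simplex $\Delta$, a regular complex. I would check that $\psi^j$ is compatible, in the sense of our morphism definition, with a cellular map $h_j\colon \Delta\rightarrow X_j$; the defining property of linear quotients --- that $(u_1,\ldots,u_j):u_{j+1}$ is generated by the variables indexed by $set(u_{j+1})$ --- is exactly what makes such a compatible cellular map available. By Proposition~\ref{rescone} the mapping cone $C(\psi^j)=F^{j+1}$ is then a cellular resolution supported on the topological mapping cone $X_{j+1}=X_j\cup_{h_j}C\Delta$, and by the corollary following Example~\ref{conex} it resolves $R/(I_j,u_{j+1})=R/I_{j+1}$, matching the Herzog--Takayama output.

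It then remains to verify that $X_{j+1}$ is regular. Since $C\Delta$ is again a simplex, the new cells are the apex $u_{j+1}$ together with one cell $(u_{j+1},\sigma)$ for each face $\sigma\subseteq set(u_{j+1})$, and regularity of $X_{j+1}$ reduces to checking the face-poset conditions of Proposition~\ref{idk} for these new cells. The boundary of $(u_{j+1},\sigma)$ splits into the faces $(u_{j+1},\sigma\setminus\{s\})$, which are again new cells, and the image under $h_j$ of the corresponding face of $\Delta$, which lands in $X_j$. Here the regularity of the decomposition function $g$ enters decisively: the condition $set(g(x_s u_{j+1}))\subseteq set(u_{j+1})$ guarantees that each such boundary face is carried to an honest cell already present in $X_j$ rather than to a degenerate or merely partial image, so that $\partial(u_{j+1},\sigma)$ is a subcomplex and each closed cell is a ball. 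This keeps the attaching maps injective on boundaries and preserves regularity at every step.

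The main obstacle I anticipate is precisely this last verification: showing that the attaching map $h_j$ of the coned simplex is a homeomorphism onto a subcomplex of $X_j$, so that no two distinct boundary faces are identified and the resulting closed cells remain balls. Without the regularity hypothesis on the decomposition function the image of $\Delta$ under $h_j$ could fail to be a subcomplex, or the cone could be attached along a non-embedded sphere, destroying regularity --- indeed this is why the hypothesis cannot be dropped. Controlling this attaching map uniformly over all $j$, and confirming that the resulting oriented cellular chain complex reproduces the iterated mapping cone differential and resolves the correct module (Proposition~\ref{resmodule}), is where the real work lies.
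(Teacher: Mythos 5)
This theorem is not proved in the paper at all: it is quoted verbatim from Dochtermann and Mohammadi (\cite{DM}, Theorem 3.10) and stated ``for completeness'' alongside the Herzog--Takayama proposition, so there is no in-paper argument to compare yours against. Judged on its own terms, your outline does follow the broad strategy of the original source --- induct on the number of generators, realize each Herzog--Takayama step topologically by coning a simplex indexed by $set(u_{j+1})$ onto the complex built so far, and use regularity of the decomposition function to control the attaching maps --- and your indexing of cells by pairs $(u_j,\sigma)$, of dimension $|\sigma|$ in homological degree $|\sigma|+1$, is the correct combinatorial model.

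However, the two steps you yourself flag as ``where the real work lies'' are precisely the content of the theorem, and the proposal does not carry them out. First, to invoke Proposition \ref{rescone} you must exhibit $\psi^j$ as a morphism in {\bf CellRes}, i.e.\ produce a cellular map $h_j\colon\Delta\rightarrow X_j$ compatible with the chain map lifting multiplication by $u_{j+1}$. The vertex of $\Delta$ labelled $x_s$ has to be sent to the vertex of $X_j$ labelled $u_{g(x_su_{j+1})}$, and compatibility in higher homological degrees requires specifying where every face of $\Delta$ goes; this is exactly where the decomposition function does its work, and asserting that linear quotients ``is exactly what makes such a compatible cellular map available'' restates the claim rather than proving it. Note also that the labels do not literally match ($\Delta$ carries the variable labels $x_s$ while $X_j$ carries the monomials $u_i$), so the compatibility condition must be checked against the actual definition of the label map $\varphi_{h_j}$ rather than assumed. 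Second, regularity of $X_{j+1}$ requires that $h_j$ be injective on the closure of each face of $\Delta$ (otherwise a coned cell fails to be a ball) and that its image be a subcomplex of $X_j$; the inclusion $set(g(x_su_{j+1}))\subseteq set(u_{j+1})$ is the right hypothesis to quote, but you give no argument that it forces this injectivity, and in general a compatible cellular map may collapse or subdivide cells (compare Example \ref{projex} and Remark \ref{subdiv}). Until these two verifications are supplied the proposal is a plausible plan rather than a proof; the argument in \cite{DM} devotes most of its length to exactly these points.
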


\subsection{ The mapping cylinder}
\label{cylinder}
One can also construct the mapping cylinder of two cellular resolutions. Just like the mapping cone construction, it agrees with both the topological mapping cylinder and the chain complex mapping cylinder.

\begin{definition}
Let $\mathcal{F}$ and $\mathcal{G}$ be two cellular resolutions with labeled connected cell complexes $X$ and $Y$, and suppose there is a map $\mathfrak{h}=({\bf h},h')\colon \mathcal{F}\rightarrow \mathcal{G}$. Then the \emph{mapping cylinder}, $D(\mathfrak{h})$, is the cellular resolution given by the following data:
we set $D_0=S$ as the topological mapping cylinder is connected, and the other free modules are given by
$$D_1=\mathcal{F}_1\oplus \mathcal{G}_1 \textrm{ and }D_i=\mathcal{F}_i\oplus \mathcal{G}_i\oplus \mathcal{F}_{i-1}\textrm{ for }i\geq 2.$$
The differentials of the mapping cylinder are then given by
$$d_1(f,g)=(d_{F1}(f)+d_{G1}(g)), d_2(f,g,f')=(d_{F2}(f)-id(f'),d_{G2}(g)+h(f'))$$
$$\textrm{ and }d_i(f,g,f')=(d_{Fi}(f)+id(f'),d_{Gi}(g)-h(f'),d_{Fi-1}(f')).$$
\end{definition}

\begin{proposition}
\label{mapcyl}
The mapping cylinder is a cellular resolution.
\end{proposition}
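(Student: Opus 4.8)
The plan is to follow the template of the mapping cone proof, Proposition~\ref{rescone}: first check that $D(\mathfrak{h})$ is a chain complex of free $S$-modules, then establish acyclicity, and finally identify it with the cellular chain complex of the topological mapping cylinder of $h'$. Freeness is immediate, since for $i\geq 2$ each $D_i$ is a finite direct sum of the free modules $\mathcal{F}_i,\mathcal{G}_i,\mathcal{F}_{i-1}$, while $D_0=S$ and $D_1=\mathcal{F}_1\oplus\mathcal{G}_1$ are free as well. That the stated differentials square to zero is a routine verification: I would expand $d_{i-1}\circ d_i$ and use $d^2=0$ in $\mathcal{F}$ and $\mathcal{G}$ together with the chain-map identity ${\bf h}\circ d_{\mathcal{F}}=d_{\mathcal{G}}\circ{\bf h}$ (the commuting squares for $\mathfrak{h}$), or simply observe that, up to reordering summands, these formulas are those of the chain-complex mapping cylinder of ${\bf h}\colon\mathcal{F}\to\mathcal{G}$ already recorded in $\mathcal{C}_\bullet({\bf Mod}_S)$, which is known to be a chain complex.

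For acyclicity I would use the standard fact that the mapping cylinder of a chain map is chain homotopy equivalent, over $S$, to its target; here this makes $D(\mathfrak{h})$ homotopy equivalent to $\mathcal{G}$. Since $\mathcal{G}$ is a cellular resolution it is acyclic, and a chain homotopy equivalence preserves homology, so $D(\mathfrak{h})$ has vanishing homology in every positive degree. An equivalent route, closer in spirit to the cone proof, is to argue $\operatorname{ker}d_i\subseteq\operatorname{im}d_{i+1}$ directly: given a cycle, the $\mathcal{F}$-component is a cycle in the acyclic $\mathcal{F}$, hence a boundary, and feeding this through the chain-map property of ${\bf h}$ and the acyclicity of $\mathcal{G}$ produces a preimage, exactly as in Proposition~\ref{rescone} but with the extra $\mathcal{F}_{i-1}$ summand tracked. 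Either way, the positive-degree argument is clean; I would then invoke Proposition~\ref{resmodule} (the topological cylinder deformation retracts onto $Y$, so each subcomplex $(\mathrm{Cyl})_{\preceq{\bf b}}$ reduces to $Y_{\preceq{\bf b}}$, which is acyclic because $\mathcal{G}$ is a resolution) to phrase acyclicity in the graded form the definition requires.

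It remains to exhibit the cellular structure, and here I would describe the cells of the topological mapping cylinder of $h'\colon X\to Y$ and match them to $D(\mathfrak{h})$. The cylinder contains a copy of $Y$ (contributing the summands $\mathcal{G}_i$), a copy of $X$ at level $0$ (contributing $\mathcal{F}_i$), and for each $i$-cell $\sigma$ of $X$ a product cell $\sigma\times[0,1]$ of dimension $i+1$ (contributing the shifted summand $\mathcal{F}_{i-1}$ in degree $i$). I would then check that the boundary of $\sigma\times[0,1]$ splits as the two ends $\sigma\times\{0\}$ and $\sigma\times\{1\}$ together with $(\partial\sigma)\times[0,1]$; under the identifications $\sigma\times\{0\}=\sigma\subset X$ and $\sigma\times\{1\}\sim h'(\sigma)\subset Y$ this reproduces precisely the terms $\mathrm{id}(f')$, $h(f')$, and $d_{\mathcal{F}}(f')$ in the stated differentials, with signs dictated by the $\operatorname{sign}$ function of Proposition~\ref{idk}. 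This shows $D(\mathfrak{h})$ is supported on a regular CW-complex and hence lies in {\bf CellRes}.

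The main obstacle I anticipate is the low-degree bookkeeping forced by connectedness. The honest algebraic cylinder would begin with $\mathcal{F}_0\oplus\mathcal{G}_0=S\oplus S$, which would resolve the two-component module $S/I\oplus S/J$; the definition instead collapses this to $D_0=S$ with $D_1=\mathcal{F}_1\oplus\mathcal{G}_1$, reflecting that the topological cylinder is connected and resolves a single quotient. One must therefore verify that this collapse is exactly the cellular effect of connectedness (mirroring the $\mathcal{G}_1\oplus S(-\deg h_0({\bf 1}))$ adjustment in the cone) and that both acyclicity in positive degrees and the correct $H_0$ survive the identification of the two augmentations. Matching the prescribed $\pm$ signs in $d_i$ to the oriented boundary of the interval factor is the other fiddly point, but it is a purely local computation once the orientation of $[0,1]$ is fixed consistently.
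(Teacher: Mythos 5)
Your proposal is correct and follows essentially the same route as the paper: identify $D(\mathfrak{h})$ with the cellular chain complex of the topological mapping cylinder (a copy of $Y$, a copy of $X$, and an $(i+1)$-cell for each $i$-cell of $X$) and then verify acyclicity. Your acyclicity argument via the standard chain homotopy equivalence of the cylinder with its target $\mathcal{G}$ is cleaner than the paper's direct kernel computation, and your explicit attention to the low-degree collapse $D_0=S$, $D_1=\mathcal{F}_1\oplus\mathcal{G}_1$ forced by connectedness addresses a point the paper passes over silently.
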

\begin{proof}
Taking the topological mapping cone of the labeled complexes $X$ and $Y$ along $\psi'$, and then computing the cellular chain complex for the mapping cone gives us the same chain complex as the above definition. Therefore we know that the mapping cylinder does have a cellular structure.

Next we need to show that the mapping cylinder is indeed a resolution, i.e. that it is acyclic. 
The kernel of $d_i$ is $\ker d_{F_i}\oplus \ker d_{G_i} \oplus (\ker \operatorname{id}\cup \ker\psi\ \cup\ \ker d_{F_{i-1}})=\ker d_{F_i}\oplus \ker d_{G_i} \oplus F_{i-1}$. It is not hard to see that $\ker d_i\subseteq \textrm{im } d_{i+1}$, and so the chain complex given by the mapping cone is acyclic if the two complexes are acyclic.

To show that this resolution is supported on some cell complex we again consider the case of the associated cell complexes. 
Take the mapping cylinder for $X$ and $Y$. Following the definition it is a cell complex with $X$ and $Y$ as subspaces, and for each cell $x$ of dimension $i$ in $X$, a $(i+1)$-dimensional cell $m_x$ in the mapping cylinder. The label of $m_x$ is the least common multiple of the labels of $x$ and $f(x)$.
Then we can construct the cellular complex of the mapping cone $M$ and get
$M_0=\mathcal{G}_0$, $M_1=\mathcal{G}_1\oplus \mathcal{F}_1$, and $M_i=\mathcal{G}_i\oplus \mathcal{F}_i\oplus \mathcal{F}_{i-1}$.
Thus we see that it is the same as the mapping cone defined for cellular resolutions, and hence there is a cellular structure.
\end{proof}

\begin{example}
\label{cylinderex}
Let us consider the same map and cellular resolutions as in the Example \ref{conex} (of a mapping cone). 
For the mapping cylinder we get the resolution shown in the Figure \ref{cylres}.

\begin{figure}
\begin{center}
\includegraphics[scale=1]{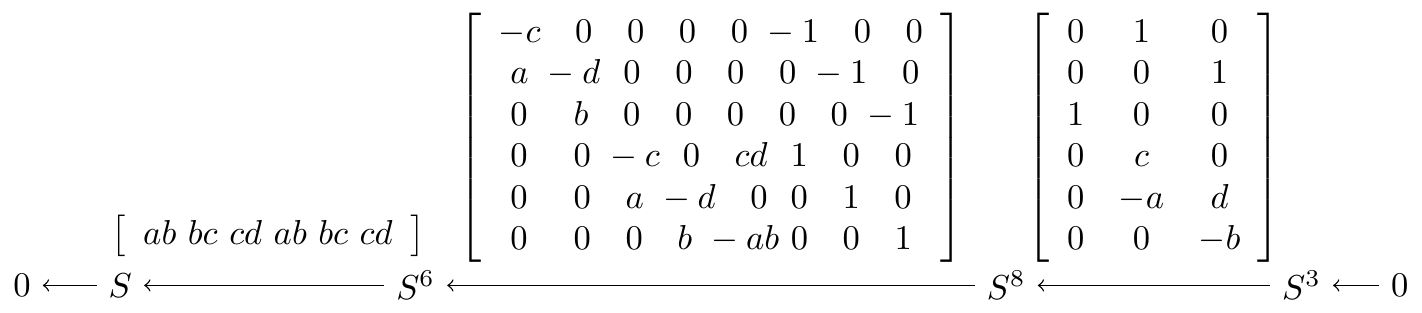}
\end{center}
\caption{The cellular resolution of the mapping cylinder in Example \ref{cylinderex}.}
\label{cylres}
\end{figure}
\end{example}

Since a single mapping cone is a cellular resolution, one can ask whether multiple mapping cones can be glued together to form cellular resolutions. The answer to that is yes, and the result is in Proposition \ref{glumap} below.

\begin{lemma}
\label{glu}
Let $\mathcal{F}$ and $\mathcal{G}$ be cellular resolutions, such that both contain the sub-resolution $\mathcal{H}$. Then gluing $\mathcal{F}$ and $\mathcal{G}$ together along $\mathcal{H}$, by identifying the $\mathcal{H}$ in $\mathcal{F}$ with the $\mathcal{H}$ in $\mathcal{G}$, gives a cellular resolution.
\end{lemma}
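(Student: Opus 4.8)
The plan is to realise the glued object as the cellular resolution supported on the pushout of the two underlying cell complexes, and then to establish acyclicity through the criterion of Proposition \ref{resmodule} combined with a Mayer--Vietoris argument. First I would fix the data: write $\mathcal{F}=\mathcal{F}_X$ and $\mathcal{G}=\mathcal{G}_Y$, and let $\mathcal{H}=\mathcal{H}_Z$ be the common sub-resolution, supported on a labeled subcomplex $Z$ identified with a subcomplex of both $X$ and $Y$. Since the labels on $Z$ are inherited from $\mathcal{H}$ in each case, they agree under the identification, so the pushout cell complex $W=X\cup_Z Y$ carries a well-defined labeling restricting correctly to $X$, $Y$ and $Z$. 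The faces of $W$ are the faces of $X$ together with the faces of $Y$ not lying in $Z$, so the cellular chain complex $\mathcal{F}_W$ is exactly the degreewise pushout $\mathcal{F}_i\oplus_{\mathcal{H}_i}\mathcal{G}_i$, which is precisely the glued chain complex. In particular the glued object is free in each degree and has a genuine cellular structure, so the content of the lemma is the acyclicity.

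By Proposition \ref{resmodule} it suffices to show that $W_{\preceq\mathbf{b}}$ is acyclic over $k$ for every $\mathbf{b}\in\mathbb{N}^n$. The key identification is
$$W_{\preceq\mathbf{b}}=X_{\preceq\mathbf{b}}\cup_{Z_{\preceq\mathbf{b}}}Y_{\preceq\mathbf{b}},$$
which holds because a face of $W$ has label $\preceq\mathbf{b}$ if and only if the corresponding face of $X$, of $Y$, or of $Z$ does, and the labels match on the overlap. Since $\mathcal{F}$, $\mathcal{G}$ and $\mathcal{H}$ are cellular resolutions, the forward direction of Proposition \ref{resmodule} tells us that each of $X_{\preceq\mathbf{b}}$, $Y_{\preceq\mathbf{b}}$ and $Z_{\preceq\mathbf{b}}$ is acyclic over $k$. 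I would then feed this decomposition into the reduced Mayer--Vietoris sequence
$$\cdots\to\tilde{H}_n(Z_{\preceq\mathbf{b}})\to\tilde{H}_n(X_{\preceq\mathbf{b}})\oplus\tilde{H}_n(Y_{\preceq\mathbf{b}})\to\tilde{H}_n(W_{\preceq\mathbf{b}})\to\tilde{H}_{n-1}(Z_{\preceq\mathbf{b}})\to\cdots,$$
whose outer terms vanish, forcing $\tilde{H}_n(W_{\preceq\mathbf{b}})=0$ in positive degrees.

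Equivalently, and more cleanly, I would run the argument algebraically. The gluing produces a short exact sequence of chain complexes
$$0\to\mathcal{H}\xrightarrow{(\iota_{\mathcal{F}},-\iota_{\mathcal{G}})}\mathcal{F}\oplus\mathcal{G}\to\mathcal{F}_W\to 0,$$
exact degreewise because $\mathcal{H}_i$ sits as the summand spanned by the faces of $Z$. Passing to the long exact homology sequence and using $H_i(\mathcal{F})=H_i(\mathcal{G})=H_i(\mathcal{H})=0$ for $i\geq 1$ collapses everything to give $H_i(\mathcal{F}_W)=0$ for $i\geq 2$, and reduces the whole statement to the vanishing of $H_1(\mathcal{F}_W)$, which the sequence identifies with $\ker\bigl(H_0(\mathcal{H})\to H_0(\mathcal{F})\oplus H_0(\mathcal{G})\bigr)$.

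The hard part will be the bookkeeping in this one low degree, which is exactly where the two formulations meet: an extra class in $\tilde{H}_0(W_{\preceq\mathbf{b}})$ appears precisely when the overlap $Z_{\preceq\mathbf{b}}$ is empty while $X_{\preceq\mathbf{b}}$ and $Y_{\preceq\mathbf{b}}$ are not. So the crux is to verify that the connecting map $H_0(\mathcal{H})\to H_0(\mathcal{F})\oplus H_0(\mathcal{G})$ is injective; writing it as $(\pi_{\mathcal{F}},-\pi_{\mathcal{G}})\colon S/I_{\mathcal{H}}\to S/I_{\mathcal{F}}\oplus S/I_{\mathcal{G}}$, its kernel is $(I_{\mathcal{F}}\cap I_{\mathcal{G}})/I_{\mathcal{H}}$, so injectivity amounts to the overlap ideal satisfying $I_{\mathcal{F}}\cap I_{\mathcal{G}}=I_{\mathcal{H}}$, ensuring no spurious degree-zero homology is introduced by the gluing. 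Establishing this connectivity condition, and confirming that $W$ is genuinely a \emph{regular} CW-complex so that Proposition \ref{resmodule} applies in the first place, is where the real work lies; the higher-degree vanishing is then formal.
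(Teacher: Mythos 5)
Your higher-degree argument is sound and considerably more precise than the paper's own proof: the paper decomposes the modules as $\mathcal{F}_i=\mathcal{F}'_i\oplus\mathcal{H}_i$ and simply asserts that differentials ``made of sums of acyclic differentials'' give an acyclic complex, whereas your Mayer--Vietoris sequence on the subcomplexes $W_{\preceq\mathbf{b}}$ (equivalently, the long exact sequence of $0\to\mathcal{H}\to\mathcal{F}\oplus\mathcal{G}\to\mathcal{F}_W\to 0$) actually proves the vanishing in degrees $\geq 2$ and correctly identifies $H_1(\mathcal{F}_W)$ with $\ker\bigl(H_0(\mathcal{H})\to H_0(\mathcal{F})\oplus H_0(\mathcal{G})\bigr)$. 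The gap is the step you yourself flag as ``where the real work lies'': the condition $I_{\mathcal{F}}\cap I_{\mathcal{G}}=I_{\mathcal{H}}$ is never established, and it cannot be established from the stated hypotheses, because it genuinely fails. Take $X$ to be an edge with vertex labels $x,y$ (the Koszul resolution of $S/(x,y)$), $Y$ an edge with vertex labels $x,z$, and $\mathcal{H}$ the sub-resolution supported on the shared vertex labeled $x$. The glued complex is the path on vertices $y,x,z$ with cellular complex $0\leftarrow S\leftarrow S^3\leftarrow S^2\leftarrow 0$, and the long exact sequence gives $H_1\cong\bigl((x,y)\cap(x,z)\bigr)/(x)=(x,yz)/(x)\neq 0$; equivalently, $W_{\preceq\deg(yz)}$ consists of two disconnected vertices, violating the criterion of Proposition \ref{resmodule}.

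So the lemma as stated needs an additional hypothesis of exactly the kind you isolate, and your reduction is the right skeleton of a proof once such a hypothesis is imposed (it does hold in the intended application, Proposition \ref{glumap}, where the gluing loci are entire shared copies of the resolutions $D^i$ inside mapping cylinders over the same label data, so no new intersections of label ideals are created). As written, however, the proposal is an honest reduction rather than a complete proof: the crux is named but not closed, and no argument can close it without strengthening the hypotheses. The one genuinely missing verification you should add in the corrected setting is precisely that every $\mathbf{b}$ with $X_{\preceq\mathbf{b}}\neq\emptyset\neq Y_{\preceq\mathbf{b}}$ has $Z_{\preceq\mathbf{b}}$ nonempty and acyclic; the regularity of $W$ and the degreewise freeness are, as you say, routine.
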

\begin{proof}
Let $\mathcal{H}$ be a sub-resolution of both $\mathcal{F}$ and $\mathcal{G}$. Then we can write $\mathcal{F}_i=\mathcal{F}'_i\oplus \mathcal{H}_i$ and $\mathcal{G}_i=\mathcal{G}'_i\oplus \mathcal{H}_i$ for $i\geq1$. The differentials in $\mathcal{F}$ can be written as $d(f,h)=(d(f)+d_F(h), d(f)+d_{\mathcal{H}}(h))$, and similarly for $\mathcal{G}$ $d(g,h)=(d(g)+d_{\mathcal{G}}(h),d(g)+d_{\mathcal{H}}(h))$. Let $\mathcal{F}\mathcal{G}$ be the glued cellular resolution. Identifying the two resolutions along $\mathcal{H}$ gives $\mathcal{F}\mathcal{G}_i=\mathcal{H}_i\oplus \mathcal{F}'_i\oplus \mathcal{G}'_i$ for $\geq1$ and $F\mathcal{G}_0=S^{\max(n_{\mathcal{F}},n_{\mathcal{G}})}$ where $\mathcal{F}_0=S^{n_{\mathcal{F}}}$ and $\mathcal{G}_0=S^{n_{\mathcal{G}}}$. The $S$-module $\mathcal{F}\mathcal{G}_i$ is a direct sum of free modules with differentials  $d(h,f,g)=(d(f)+d(g)+d_{\mathcal{H}}(h), d(f)+d_{\mathcal{F}}(h),d(g)+d_{\mathcal{G}}(h))$. The differentials are made of sums of acyclic differentials, hence they also give an acyclic chain complex.

The above shows that $\mathcal{F}\mathcal{G}$ is a resolution. Now we want to show that it is supported on a cell complex. We note that the resolution has everything corresponding to the cell complex of $\mathcal{F}$ and to the cell complex of $\mathcal{G}$ as well. They are connected along $\mathcal{H}$. Taking  the cell complex obtained by gluing the associated complexes of $\mathcal{F}$ and $\mathcal{G}$ along the cell complex of $\mathcal{H}$, gives us a cell complex that has $\mathcal{F}\mathcal{G}$ as its cellular complex.
\end{proof}

\begin{proposition}
\label{glumap}
Let $\mathcal{D}$ be a finite diagram of cellular resolutions.
Then gluing mapping cylinders into $\mathcal{D}$, gives a new cellular resolution.
\end{proposition}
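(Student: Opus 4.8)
The plan is to build the glued object one mapping cylinder at a time, invoking Lemma \ref{glu} repeatedly, with finiteness of $\mathcal{D}$ guaranteeing termination. First I would unpack the construction. A finite diagram $\mathcal{D}$ consists of finitely many cellular resolutions $\mathcal{F}_1,\dots,\mathcal{F}_m$ at the nodes and finitely many morphisms $\mathfrak{h}_e\colon\mathcal{F}_{s(e)}\to\mathcal{F}_{t(e)}$ along the edges $e$. For each edge I replace $\mathfrak{h}_e$ by its mapping cylinder $D(\mathfrak{h}_e)$, which is a cellular resolution by Proposition \ref{mapcyl}. The structural observation I would record is that both endpoints sit inside $D(\mathfrak{h}_e)$ as sub-resolutions: from the description $D_i=\mathcal{F}_i\oplus\mathcal{G}_i\oplus\mathcal{F}_{i-1}$, the source $\mathcal{F}_{s(e)}$ embeds via the first $\mathcal{F}_i$-summands and the target $\mathcal{F}_{t(e)}$ via the $\mathcal{G}_i$-summands, with the degree-shifted $\mathcal{F}_{i-1}$-summands forming the cylinder connecting them. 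This is exactly the algebraic shadow of the fact that the topological mapping cylinder of $f\colon X\to Y$ contains $X$ at one end and $Y$ as a deformation retract at the other.

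Next I would set up an induction on the number of edges. The base case of a single edge is precisely Proposition \ref{mapcyl}. For the inductive step, suppose the cylinders for some subset of edges have already been glued into a single cellular resolution $\mathcal{C}$, and let $e$ be a new edge. The node resolution at an endpoint of $e$ persists as a sub-resolution of both $\mathcal{C}$ and $D(\mathfrak{h}_e)$, so I can glue $D(\mathfrak{h}_e)$ onto $\mathcal{C}$ along that shared node and apply Lemma \ref{glu} to conclude the result is again a cellular resolution; concretely, identifying the common summands yields differentials that are again sums of the acyclic differentials of the pieces, so acyclicity is inherited exactly as in the proof of Lemma \ref{glu}. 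After finitely many such steps every cylinder has been incorporated.

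I would then confirm the topological side. Gluing the mapping cylinders of the underlying cell complexes $X_i$ along the shared complexes produces a regular CW-complex, and computing its cellular chain complex returns exactly the algebraic object constructed above, by the same argument used in the proofs of Propositions \ref{rescone} and \ref{mapcyl}. This is precisely the ``gluing in mapping cylinders'' recipe attached to the homotopy colimit of Definition \ref{tophocolim}, so the result is supported on an honest cell complex, and by Proposition \ref{resmodule} it resolves the associated direct sum of quotient modules.

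The hard part will be the inductive step when the new edge $e$ has \emph{both} endpoints already present in $\mathcal{C}$, that is, when adding $e$ closes a cycle in $\mathcal{D}$. Then $D(\mathfrak{h}_e)$ must be glued to $\mathcal{C}$ along two sub-resolutions simultaneously, which is not literally the single-sub-resolution gluing of Lemma \ref{glu}. I would resolve this by extending Lemma \ref{glu} to simultaneous gluing along a common direct-sum sub-resolution, where the argument is unchanged because the glued differentials remain sums of acyclic differentials; alternatively, if $\mathcal{D}$ is taken to be tree- or poset-shaped as in the homotopy-colimit setting, one can order the edges so that each cylinder is attached along a single node, sidestepping the difficulty entirely. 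A secondary point to check is the degree-zero term: under the connectedness assumption carried in the mapping cylinder definition, every node resolution begins with $S$, so all gluings identify these copies of $S$, and the base of the resulting resolution is well defined.
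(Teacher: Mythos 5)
Your overall strategy --- attach one mapping cylinder at a time via Lemma \ref{glu}, using Proposition \ref{mapcyl} for each cylinder and finiteness of $\mathcal{D}$ for termination --- matches the paper's proof, and you have correctly located the genuinely hard step: attaching a cylinder whose two ends are both already present, i.e.\ closing a cycle in the diagram. But your proposed resolution of that step is wrong, and it is precisely the point the paper spends most of its effort on. You claim that extending Lemma \ref{glu} to simultaneous gluing along both sub-resolutions works ``unchanged because the glued differentials remain sums of acyclic differentials.'' The paper explicitly shows the opposite: writing the naively glued complex as $\tilde{\mathcal{F}}_k=D^j_k\oplus M^1_k\oplus M^2_k\oplus D^i_k$, the result is \emph{not} acyclic, because after the identification the two cylinders contribute, in each degree $k+1$, distinct elements with the same image, creating kernel classes not hit by the differential. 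The topological picture makes the failure transparent: gluing two mapping cylinders between the same pair of spaces along both ends produces a complex with a hole (already for two maps between points one gets a circle), so acyclicity cannot be inherited formally.

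The missing idea is the repair: for each generator in homological degree $n$ of the shared target $D^j$ one must adjoin a new free module in homological degree $n+2$ whose differential hits exactly the degree-$(n+1)$ summands coming from the two cylinder components of that generator, thereby killing the spurious kernel classes; topologically this is filling each hole with an $(n+2)$-cell whose boundary is the pair of $(n+1)$-cells contributed by the two cylinders. Without this step the induction does not close for a general finite diagram. Your fallback of restricting to tree-shaped diagrams avoids the issue but proves a strictly weaker statement than the proposition, which is asserted for an arbitrary finite diagram.
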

\begin{proof}
Let $\mathcal{D}$ be a finite diagram of cellular resolutions. Then for each morphism $\mathfrak{f}_{ij}=({\bf f},f):D^i\rightarrow D^j$ in it, we can construct the mapping cylinder. By Proposition \ref{mapcyl} the mapping cylinders are cellular resolutions. 
In the mapping cylinder of $\mathfrak{f}_{ij}$ both $D^i$ and $D^j$ are sub-resolutions. 
Lemma \ref{glu} tells us that any two mapping cylinders glued along $D^i$ or $D^j$ are a cellular resolution. Thus by gluing the mapping cylinders together along the common components of $\mathcal{D}$ one at a time gives us a cellular resolutions, while the components are not connected.

In the case we have a cellular resolution $\mathcal{F}$, obtained by gluing mapping cylinders, that contains two (or more) copies of a cellular resolution $D^i$ we can glue the cellular resolution to itself along the sub-resolution $D^i$. This corresponds to the situation where we have more than one map between two objects in the diagram, for example a composition of maps being equal to another map.  Because fo this one can view the resolution before gluing consisting of the resolutions $D^i$, $D^j$, and two disjoint mapping cones $M^1$ and $M^2$ between them. The piece of the resolution 
$\mathcal{F}_k$ can be written as $D^j_k\oplus M^1_k\oplus M^2_k\oplus D^i_k\oplus D^i_k$. 
Trying to use the approach with the differentials as in Lemma \ref{glu}, we get a chain complex $\tilde{\mathcal{F} }$ which has $\tilde{\mathcal{F} }_k=D^j_k\oplus M^1_k\oplus M^2_k\oplus D^i_k$ with a differential $d(f,a)=(d(f),d_1(f)+d(a)+d_2(f))$. This chain complex is not acyclic. The differentials will not cover all the elements after homological degree 2. The missing elements in the kernel of the maps come from that after gluing, both mapping cylinders in degree $k+1$ will have elements mapping to the same element of the image of $D^j$ in $D^i_k$.
To make the glued resolution acyclic, we want to identify the mapping cylinders corresponding to a same map. This is done by adding a free module $G$ in homological degree $n+2$ for each of the generators in homological degree $n$ of $D^j$ such that $d(G)$ maps exactly to the degree $n+1$ modules coming from the mapping cylinder component of a single generator. The added modules provide the needed elements to the maps to cover the kernel elements coming from the mapping cylinders. Hence we get an acyclic resolution after the glueing and identifying the mapping cylinders. 

On the supported cell complex the gluing without identifying the mapping cones corresponds to having a hole in the complex. Adding the extra pieces is equivalent to adding in an $(n+2)$-cell for each $n$-cell in the complex of $D^j$ such that the $(n+1)$-cells in the mapping cylinder corresponding to an $n$-cell form the boundary for the $(n+2)$-cell. 

So now we have that gluing the mapping cylinders together one common component at a time gives a cellular resolution at each step.
With the assumption that our diagram is finite, we have that eventually each common component of the mapping cylinders has been glued, and we have a cellular resolution.
\end{proof}

\begin{definition}
\label{gluedef}
Let $D$ be a diagram of labeled cell complexes. Then we take their gluing along the maps to be the same topological cell complex as with the usual gluing of complexes and the labels on the vertices to be the least common multiple of all the labels of the vertices that are glued together.
\end{definition}

\begin{proposition}
\label{gluemap}
Let $({\bf f}, f): \mathcal{F}\rightarrow \mathcal{G}$ be a morphism of cellular resolutions. Then gluing $\mathcal{F}$ to $\mathcal{G}$ along the chain map  {\bf f} of the morphism gives a cellular resolution.
\end{proposition}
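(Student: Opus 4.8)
The plan is to realise the gluing as a labelled cell complex via Definition~\ref{gluedef}, equip it with its cellular free complex, and then verify the two defining properties of a cellular resolution: that it is a complex of free $S$-modules supported on a regular CW-complex, and that it is acyclic. Writing $X$ and $Y$ for the labelled complexes of $\mathcal{F}$ and $\mathcal{G}$ and $\mathfrak{f}=({\bf f},f)$ for the morphism, the gluing along $f$ is the identification complex $Z=(X\sqcup Y)/(x\sim f(x))$ with each face relabelled by the least common multiple of the labels identified onto it. First I would record that the associated cellular complex $\mathcal{F}\mathcal{G}$ is degreewise a direct sum of the free modules surviving the identification, hence free, and that its differential is the cellular differential of $Z$. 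The compatibility of $f$ with ${\bf f}$ enters here: because a morphism in {\bf CellRes} cannot lower multidegrees (the coefficients of ${\bf f}$ lie in $S$, as observed around Example~\ref{ex1}), one has $\mathbf{a}_x\preceq\mathbf{a}_{f(x)}$ for every face $x$ of $X$, so the lcm-labels prescribed by Definition~\ref{gluedef} agree on each identified face with the label already carried by $Y$. This is the step that ties the topological gluing of Definition~\ref{gluedef} to the algebraic gluing along the chain map ${\bf f}$.

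For acyclicity and the regular structure I would compare $Z$ with the mapping cylinder $D(\mathfrak{f})$, which is already a cellular resolution by Proposition~\ref{mapcyl} and which contains $X$, $Y$, and one cylinder cell $m_x$ of dimension $\dim x+1$ for each face $x$ of $X$, carrying the label $\mathrm{lcm}(\mathbf{a}_x,\mathbf{a}_{f(x)})$. The identification complex $Z$ is obtained from the cylinder by collapsing the cylinder cells together with the faces they retract onto; topologically this is the standard deformation retraction of a mapping cylinder, and it is compatible with the multigrading by the label computation above. I would present this collapse in the language of Lemma~\ref{glu}, viewing the cylinder as exhibiting $X$ glued in and removing it to leave $\mathcal{F}\mathcal{G}$. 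Since $D(\mathfrak{f})$ is acyclic and the collapse is a homotopy equivalence, $\mathcal{F}\mathcal{G}$ is acyclic as well.

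To make the acyclicity rigorous I would invoke the subcomplex criterion of Proposition~\ref{resmodule}: it suffices to show $Z_{\preceq\mathbf{b}}$ is acyclic over $k$ for every $\mathbf{b}\in\mathbb{N}^n$. Each such strand is assembled from the strands $X_{\preceq\mathbf{b}}$ and $Y_{\preceq\mathbf{b}}$, both acyclic because $\mathcal{F}$ and $\mathcal{G}$ are resolutions, glued along the faces identified by $f$; acyclicity of the glued strand then follows exactly as in Lemma~\ref{glu}, where a direct sum of acyclic differentials remains acyclic. Proposition~\ref{resmodule} then identifies $\mathcal{F}\mathcal{G}$ as a cellular resolution of the quotient determined by the lcm-combined label ideal. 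I expect the main obstacle to be precisely this multigraded bookkeeping of the identification: one must check that identifying faces of $X$ with faces of $Y$ of possibly strictly larger multidegree creates no homology in any strand $Z_{\preceq\mathbf{b}}$, and that the collapse stays well defined when $f$ sends several faces of $X$ onto a single face of $Y$. Both points are controlled by the degree-nondecreasing behaviour of compatible maps, so the compatibility condition built into the definition of a morphism is exactly what guarantees that the gluing is again a cellular resolution.
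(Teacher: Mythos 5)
Your argument reaches the correct conclusion but by a much longer road than the paper, and it misses the one observation that makes the paper's proof essentially a two-liner: the gluing in Proposition~\ref{gluemap} is along the \emph{entire} chain map ${\bf f}$, whose domain is all of $\mathcal{F}$, so every generator of $\mathcal{F}_i$ is identified with an element of $\mathcal{G}_i$ (or with $0$), the quotient $\mathcal{F}_i\sqcup\mathcal{G}_i/\sim$ is simply isomorphic to $\mathcal{G}_i$, and the differentials are those of $\mathcal{G}$. The glued object is therefore $\mathcal{G}$ itself, already a cellular resolution, and no acyclicity check is needed. Your detour through the mapping cylinder $D(\mathfrak{f})$, its deformation retraction, Lemma~\ref{glu} and the strand criterion of Proposition~\ref{resmodule} is not wrong --- topologically $(X\sqcup Y)/(x\sim f(x))$ is indeed just $Y$, which is what your collapse produces --- but it expends considerable effort reproving acyclicity of something canonically equal to $\mathcal{G}$. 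What your version buys is robustness: it would survive a generalization to gluing along a partial map or along a subcomplex of $X$, where the result genuinely retains material from both sides and Proposition~\ref{resmodule} would be doing real work; the paper's argument is special to the total gluing. One caution on a step that carries weight in your writeup: the divisibility $\mathbf{a}_x\preceq\mathbf{a}_{f(x)}$ does hold, but not merely because "the coefficients of ${\bf f}$ lie in $S$" --- Example~\ref{ex1} explicitly notes that morphisms need not preserve degree. The correct reason is the compatibility condition $f_0(m_x)=\varphi_g(m_x)=m_{f(x)}$ combined with $S$-linearity of $f_0$, which forces $m_{f(x)}\in(m_x)$ for vertex labels and hence, by taking least common multiples, for all faces; you need this stated correctly, since it is what prevents the lcm-relabelling of Definition~\ref{gluedef} from altering the labels of $Y$ and thereby breaking the strand acyclicity you invoke. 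In the paper's proof this issue never arises, because the quotient is read off directly as $\mathcal{G}$ with its own labels.
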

\begin{proof}
Let $({\bf f}, f): \mathcal{F}\rightarrow \mathcal{G}$ be a morphism of cellular resolutions. Let us consider the gluing along {\bf f } given by $\mathcal{F}\sqcup \mathcal{G}/\sim$, where $x\in \mathcal{F}_i\sim x'\in \mathcal{G}_i$ if we have $f_i(x)=x'$.
Firstly we show that $\mathcal{F}_i\sqcup \mathcal{G}_i/\sim$ is a free module. The map $f_i$ maps each generator of $\mathcal{F}_i$ to an element $x'$ of $\mathcal{G}$ or to 0. This means that each element can be written as a combination of the basis elements of $\mathcal{G}$, so $\mathcal{F}_i\sqcup \mathcal{G}_i/\sim\cong \mathcal{G}_i$ is a free module. Also this gives that the differentials are just the same as in $\mathcal{G}_i$. Furthermore, as $\mathcal{G}$ is a cellular resolution, $\mathcal{F}\sqcup \mathcal{G}/\sim$ also has a labeled cell complex and is a cellular resolution. 
\end{proof}

\section{Products, coproducts, and the tensor product}
\subsection{The product}
\label{prod}

We know that the categories {\bf Top} and $\mathcal{C}_{\bullet}(\bf{Mod}_S)$ have different types of products. In $\mathcal{C}_{\bullet}(\bf{Mod}_S)$ it is (in a finite case) the same as the direct sum, whereas in {\bf Top} we get a connected cell complex instead of the disjoint union. This tells us that we cannot use either of the known definitions for the product in {\bf CellRes}, as this would then either not preserve the topological structure or not preserve the algebraic structure of a product. 

We can, however, lift the construction of the topological product with trivially labeled cell complex to {\bf CellRes}.

\begin{definition}
\label{prodef}
Let $\mathcal{F}$ be a cellular resolution supported on the cell complex $X$, with $F_i=S^{a_i}$ and differentials $\partial_i$. Let $T_n$ be the cellular resolution coming from the $n$-simplex with labels 1.

The product of a cellular resolution $\mathcal{F}$ with $T_n$, is the cellular resolution $P$, with $P_0=S^{a_0}$ and 
$$P_i=S^{\sum_{k=0}^{i-1}\binom{n}{k+1}a_{i-k}}\ \mathrm{ for }\ i\geq1.$$
The differential $d_i:P_{i+1}\rightarrow P_i$ of the product in the $i\times i+1$ matrix form is given by
$$d_i=\left[\begin{array}{cccccc}
[\partial_{i+1}]_n&[\operatorname{id}]_{n\binom{n}{2}} &0&0&\cdots&0\\
0&[\partial_i]_{\binom{n}{2}}& [\operatorname{id}]_{\binom{n}{2}\binom{n}{3}}&0&\cdots&0\\
0&0&[\partial_{i-1}]_{\binom{n}{3}}&[\operatorname{id}]_{\binom{n}{3}\binom{n}{4}} &\cdots&0\\
\vdots&\vdots& & & & \vdots\\
0&0&\cdots&0&[\partial_1]_{\binom{n}{i}}&[\operatorname{id}]_{\binom{n}{i}\binom{n}{i+1}}\\
\end{array}\right],$$
where $[\partial_i]_n$ denotes the $n\times n$ diagonal matrix where each diagonal entry is $\partial_i$, and $[\operatorname{id}]_{\binom{n}{i}\binom{n}{i+1}}$ denotes the matrix of the differential of the simplex with identity map entries. 

The cell complex supporting the product is the topological product of $X$ and the $n$-simplex. The orientation of the product complex is given by each copy of $X$ having the same orientation as $X$ and each $n$-simplex having the same orientation and the new faces being ordered by the order of the one dimension lower faces of $X$. 

The projection from $P$ to $\mathcal{F}$ is a pair $({\bf p}_1,p_1')$ where $p_1'$ is the standard topological projection and ${\bf p}_1$ is a compatible chain map with $p_{1,0}=\operatorname{id}$.
The projection from $P$ to $T_n$ is a compatible pair $({\bf p}_2,p_2')$ where $p_2'$ is also the standard topological projection, and ${\bf p}_2$ is a compatible chain map sending all labels to 1. 
\end{definition}

\begin{remark}
The above is not a product in the usual sense, as the map in the universal property is not unique.  However, the maps are unique up to homotopy and this suffices for our purposes.
\end{remark}

\begin{example}
\label{prod1}
Let $S=k[x,y]$.
Let $\mathcal{F}$ be the cellular resolution associated to the Koszul complex of $(x,y)$. Then consider the product with $T_2$. Using Definition \ref{prodef}, we get that the resolution is
$$0\leftarrow S\xleftarrow{d_1}  S^4\xleftarrow{d_2}  S^4\xleftarrow{d_3}  S\leftarrow 0$$ with maps
$d_1=[x\ y\ x\ y]$,$d_2=\left[\begin{array}{cccc}
-y&0&-1&0\\
x&0&0&-1\\
0&-y&1&0\\
0&x&0&1
\end{array}\right]$ and $d_3=\left[\begin{array}{c}
1\\ -1\\ -y\\ x
\end{array}\right]$.
By the definition of a product, if we have something mapping to the two components, we should have a map to the product, too, such that the maps commute. 
We can take $\mathcal{F}$ as the cellular resolution mapping to itself and to $T_2$. Then we have a map $({\bf f},f)$ from $\mathcal{F}$ to the product. On the level of cell complexes, we can draw a picture of the product diagram, which is shown in Figure \ref{prodex}.
 \begin{figure}

\begin{center}
\includegraphics[scale=1]{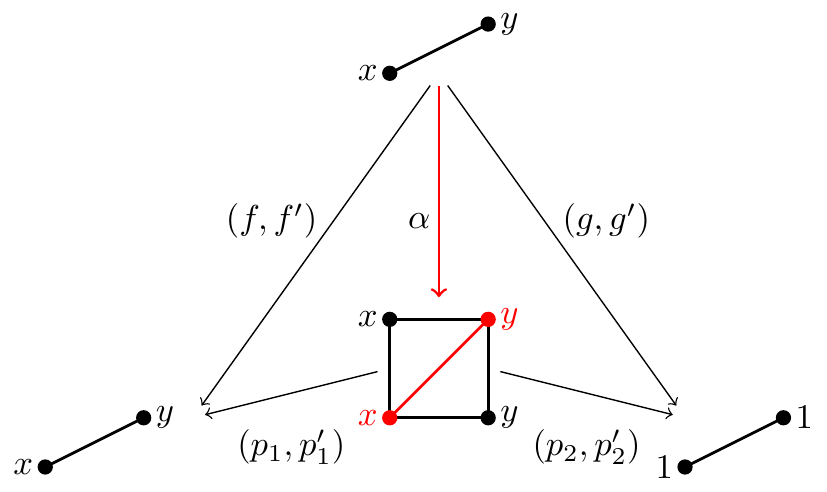}
\end{center}
 \caption{The product diagram on the cell complex level of Example \ref{prod1}.}
  \label{prodex}
 \end{figure}
 The red part of the diagram marks where a continuous map $\alpha$ would map to in a product, if we consider the topological product for the cell complexes. Clearly, $\alpha$ is not  a cellular map, as it maps the edge of $\mathcal{F}$ to a higher dimensional face. 
 So one can not take the topological map $\alpha$ as a component for the morphism making the diagrams commute in ${\bf CellRes}$. We know that the vertices have to be mapped as in the drawn red map to make the diagram commute, so we can choose a cellular map $\beta$ that maps the vertices in this way. This leaves two different options how we map the edge, up or down from the red line, and with just the requirement to make the diagram commute, there is no clear choice between the two options. 
 
 This non-uniqueness can also be seen algebraically. We refer to the left and the right side of the diagram in the Figure \ref{prodex}. To make the left side diagram commute on with $p_{1,1}=\operatorname{id}$ and $f_1=\operatorname{id}$, we see that $\beta_1=\operatorname{id}$.  Then also using that the right diagram must commute we get $\beta_1=\left[\begin{array}{cc}
 1&0\\ 0&0\\ 0&0\\ 0&1
 \end{array}\right]$. Finally to choose the map $\beta_2$, we run into the situation where both $\left[\begin{array}{c}
 1\\ 0\\ 0\\ x
 \end{array}\right]$ and $\left[\begin{array}{c}
 0\\ 1\\ y\\ 0
 \end{array}\right]$ make the diagram commute. 
 Hence the product defined in Definition \ref{prodef} does not give a product in the category-theoretic sense, because the map in the universal property is not unique. However, the choices are homotopic to each other, so the map is unique up to homotopy. 
\end{example}

These choices in choosing the map to the product $P$ arise at level where the map of the topological product map subdivides a cell. 
\begin{proposition}
Let $P$ be the product of cellular resolutions $\mathcal{F}$ and $T_n$. Let $\mathcal{Z}$ be a cellular resolution mapping to both $F$ and $T_n$. Then the vertex map from the cell complex supporting $\mathcal{Z}$ to the cell complex of $P$ is well-defined and compatible with a module map. 
\end{proposition}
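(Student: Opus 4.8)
The plan is to produce the vertex map as the pairing of the two cellular maps underlying the given morphisms, to argue that at the level of $0$-cells this pairing is forced (so that the ambiguity highlighted in Example \ref{prod1} cannot occur), and finally to exhibit the degree-one module map that realizes it.

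First I would fix notation. Write $Z$ for the cell complex supporting $\mathcal{Z}$ and $X$ for the complex supporting $\mathcal{F}$, so that the complex supporting $P$ is the topological product $X\times T_n$. The hypotheses give morphisms $({\bf g},g)\colon\mathcal{Z}\to\mathcal{F}$ and $({\bf h},h)\colon\mathcal{Z}\to T_n$ in {\bf CellRes}, whose topological components $g\colon Z\to X$ and $h\colon Z\to T_n$ are cellular, hence carry the $0$-skeleton of $Z$ into the $0$-skeleta of $X$ and of $T_n$. Since the vertices of a product complex are exactly the pairs of vertices, the natural candidate is $\gamma_0(v)=(g(v),h(v))$ for each vertex $v$ of $Z$.

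Next I would check that $\gamma_0$ is well defined and unambiguous. Both $g$ and $h$ are genuine continuous maps whose restrictions to the $0$-skeleton are ordinary functions, so $g(v)$ and $h(v)$ are single, uniquely determined vertices; hence the pair $(g(v),h(v))$ is a single, uniquely determined vertex of $X\times T_n$. This is the crucial point distinguishing the vertex map from the situation in Example \ref{prod1}: the non-uniqueness observed there arises only when the projection-compatible map must decide how to send a positive-dimensional cell that the product subdivides, and such subdivision never affects $0$-cells. Thus no choice is involved in $\gamma_0$.

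Finally I would produce the compatible module map. The generators of $P_1$ are indexed by the vertices of $X\times T_n$, equivalently by pairs $(u,w)$ with $u$ a vertex of $X$ and $w$ a vertex of $T_n$; under the description in Definition \ref{prodef} this is exactly the summand $P_1\cong\mathcal{F}_1\otimes_S (T_n)_1$ built from the $a_1$ vertices of $X$ and the $n$ vertices of the simplex. Using the degree-one components $g_1\colon\mathcal{Z}_1\to\mathcal{F}_1$ and $h_1\colon\mathcal{Z}_1\to(T_n)_1$ of the given chain maps, I would define $\gamma_1\colon\mathcal{Z}_1\to P_1$ on generators by $e_v\mapsto e_{(g(v),h(v))}$. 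By construction $\gamma_1$ sends the generator attached to the vertex $v$ to the single generator attached to $\gamma_0(v)$, which is precisely the compatibility condition between a cellular map and a chain map. For the label condition, recall that $T_n$ carries the trivial label $1$ on every face, so the label of $(g(v),h(v))$ in the product is the $\mathrm{lcm}$ of the label of $g(v)$ and $1$, i.e.\ the label of $g(v)$; consequently the induced label map $\varphi_{\gamma_0}$ agrees with $\varphi_g$, and the degree-zero component supplied by ${\bf g}$ serves as a compatible $f_0$. Hence $\gamma_0$ is compatible with a module map. The main obstacle I anticipate is bookkeeping rather than conceptual: one must match the somewhat opaque generator indexing of $P_1$ coming from the monomial-matrix description in Definition \ref{prodef} with the concrete vertex pairs of $X\times T_n$, and then confirm that the compatibility clause on labels holds on the nose. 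This is made tractable precisely because $T_n$ is trivially labeled, so the product labels reduce to the labels of $X$ and the verification collapses to the compatibility already recorded for $({\bf g},g)$.
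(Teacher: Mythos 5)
Your proposal is correct and follows essentially the same route as the paper: the paper takes the continuous map $\alpha$ induced by the universal property of the topological product (which is exactly your pairing $(g(v),h(v))$ on the $0$-skeleton), observes that cellularity of the given maps plus commutativity forces vertices to vertices, and then reads off the compatible degree-one component from the commuting triangle with $\mathcal{F}$. Your explicit check of the label condition via the trivial labels on $T_n$ is a slightly more detailed verification of what the paper summarizes as "follows from the commutativity of the diagram," but it is not a different argument.
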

\begin{proof}
Let $X_{\mathcal{Z}}$ be the cell complex supporting $\mathcal{Z}$, and let $X_{P}$ be the cell complex supporting $P$.
Let $\alpha:X_{\mathcal{Z}}\rightarrow X_P$ be the continuous map between cell complexes that makes the topological product diagram commute.

In the dimensions where $\alpha$ does not subdivide cells it satisfies the conditions of a cell map. 
Since the other maps in the commutative diagram are cellular, we know that they map vertices to vertices. Then commutativity implies that $\alpha$  must map vertices to vertices. 
Let us denote by $\beta$ the cellular part of $\alpha$.

Similarly, if we just consider the chain map part ${\bf \beta}$, we can compute $\beta_0$ due to commutativity of the triangle between $\mathcal{Z}$, $P$ and $\mathcal{F}$. The map $\beta_0$ is compatible with the cellular map $\beta$, which again follows from the commutativity of the diagram.
\end{proof}

\begin{remark} 
\label{subdiv}
We note two things about the nature of the subdivision in the product $P$.
\begin{itemize}
\item[(1)] In the product, the only subdivided cells are the ones not in either one of the components of the product.
\item[(2)]Subdivision maps a face of $Z$ of dimension $d$ to a  face in $X_P$ of dimension $d+1$, and the higher dimensional cell is divided into two parts.
\end{itemize}
\end{remark}

\begin{proposition}
\label{prodapprox}
Let $P$ be the product of the cellular resolutions $\mathcal{F}$ and $T_n$. Let $\mathcal{Z}$ be a cellular resolution mapping to both $\mathcal{F}$ and $T_n$. Denote the cell complexes the cellular resolutions are supported on $X_P$, $X_{\mathcal{F}}$, and $X_{\mathcal{Z}}$.

Let $\alpha:X_{\mathcal{Z}}\rightarrow X_P$ be the continuous map that makes the topological product diagram commute.
Then there exists a cellular map $\beta$ that is homotopic to the unique topological map $\alpha$ in the topological product.

Moreover, the cellular map $\beta$ together with a compatible chain map forms a morphism that gives commutative product diagram in {\bf CellRes}.
\end{proposition}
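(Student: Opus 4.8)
The plan is to read the cellular map $\beta$ off the Cellular Approximation Theorem and then promote it to a morphism in {\bf CellRes} using the explicit description of the subdivision recorded in Remark \ref{subdiv}. The map $\alpha\colon X_{\mathcal{Z}}\rightarrow X_P$ is a continuous map of CW-complexes, so the Cellular Approximation Theorem at once provides a cellular map $\beta$ together with a homotopy $\beta\sim\alpha$; this proves the first assertion. The real content is that $\beta$ can be chosen so that its associated chain map is compatible and the product triangles commute, so the effort goes into controlling $\beta$ rather than merely producing it.

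First I would pin down $\beta$ explicitly. By Remark \ref{subdiv}(1), $\alpha$ can fail to be cellular only on cells lying in neither factor of $P$, and by Remark \ref{subdiv}(2) each such failure sends a $d$-dimensional face of $X_{\mathcal{Z}}$ across a subdivided $(d+1)$-cell of $X_P$ that has been split into two pieces. On all remaining cells $\alpha$ is already cellular and I keep $\beta=\alpha$; on a subdivided cell I push the image onto one of the two $d$-dimensional faces bounding the divided region. As Example \ref{prod1} shows, either choice is admissible and the two choices are homotopic, and the homotopy retracting the image off the interior of the divided cell is exactly the cellular-approximation homotopy $\beta\sim\alpha$.

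Next I would build the compatible chain map and verify commutativity. The preceding proposition already supplies the vertex map $\beta_0$ compatible with $\beta$, and the remark following the definition of compatibility attaches to the cellular map $\beta$ a canonical chain map ${\bf \beta}$; it is a genuine chain map because $\beta$ carries boundaries to boundaries degreewise, so $({\bf \beta},\beta)$ is a morphism in {\bf CellRes}. For the triangles: since $\alpha$ makes the topological product diagram commute, $p_1'\circ\alpha$ and $p_2'\circ\alpha$ are the given maps from $X_{\mathcal{Z}}$ to $X_{\mathcal{F}}$ and to the $n$-simplex, whence $p_1'\circ\beta$ and $p_2'\circ\beta$ recover them up to homotopy. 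On the algebraic side, homotopic cellular maps induce chain-homotopic maps on the labeled cellular complexes, so ${\bf p}_1\circ{\bf \beta}$ and ${\bf p}_2\circ{\bf \beta}$ agree with the original chain maps up to homotopy. Since the product of Definition \ref{prodef} is only required to be universal up to homotopy, as noted in the remark there, this is all that is needed.

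The step I expect to be the main obstacle is the algebraic half of the commutativity check: confirming that collapsing the image off a subdivided cell leaves the monomial coefficients of the differentials undisturbed, so that ${\bf \beta}$ is compatible and the triangles commute up to chain homotopy and not merely on the underlying spaces. The decisive observation, which I would isolate, is that by Remark \ref{subdiv}(1) every subdivided cell sits outside both factors; hence projecting it by $p_1'$ or $p_2'$ annihilates precisely the ambiguous direction, so the composites ${\bf p}_1\circ{\bf \beta}$ and ${\bf p}_2\circ{\bf \beta}$ are independent of the choice made on subdivided cells, and the two homotopic options for $\beta$ in fact give identical composites.
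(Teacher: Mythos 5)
Your proposal matches the paper's proof in both structure and substance: cellular approximation supplies $\beta\sim\alpha$, Remark \ref{subdiv} localizes the failure of cellularity to the "purely product" subdivided cells, $\beta$ is defined there by choosing the boundary of one of the two halves of the divided cell, and commutativity of the product triangles follows because the projections collapse the subdivided cell onto the image of the original face, making the composites independent of the choice. Your closing observation that $p_1'$ and $p_2'$ annihilate the ambiguous direction is exactly the mechanism the paper uses when it notes that the whole boundary of the subdivided cell projects to $f(\mathcal{F})$, so no further commentary is needed.
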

\begin{proof}
From the cellular approximation theorem we know that if we have a continuous map between two CW-complexes, then there exists a cellular map that is homotopic to the continuous map. In our case take the continuous map to be from $\mathcal{Z}$ to $P$ in the product, then there exists a cellular map $\alpha:\mathcal{Z}\rightarrow P$. 
We know that the cellular approximation $\beta$ to the unique topological map $\alpha$ is equivalent to $\alpha$ up to the first subdivision of cells in $P$. 
Let $i\geq 2$ be the dimension of the faces where we get the first subdivision, and let $\mathcal{F}\in \mathcal{Z}$ be one such face. From our observations in Remark \ref{subdiv} we have that $\alpha(\mathcal{F})$ is contained in some cell $\mathcal{G}$ of dimension $i+1$, and the projection of that cell is $f(\mathcal{F})$. Since $\mathcal{G}$ is of dimension $i+1$ and it is "purely a product face", then the boundary $\mathcal{G}$ also maps to $f(\mathcal{F})$ under the projection. Moreover, the boundary of $\mathcal{F}$ gets mapped to the boundary of $\mathcal{G}$ by continuity, and $\alpha(\mathcal{F})$ divides $\mathcal{G}$ into two parts. Combining these observations we can choose the boundary (with only entire faces chosen) of one of the halves of $\mathcal{G}$ to be $\beta(\mathcal{F})$. Then $\beta$ commutes with the other maps in this dimension. Furthermore as $\beta$ is a cellular map, the higher dimensional cells will also satisfy the commutativity requirements due to mapping in the same way as the $i$-dimensional one.
On the algebraic side we can then construct the algebraic map based on $\beta$.
\end{proof}

\begin{proposition}
The product construction gives a product up to homotopy, that is, $P$ is a cellular resolution, there is a map to each component of the product from $P$ and it satisfies the universal property up to homotopic maps. 
\end{proposition}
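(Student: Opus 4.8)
The plan is to establish the three assertions separately, relying almost entirely on results already in hand. Since $P$ is by construction a complex of free $S$-modules, the content of the first assertion is that $P$ is acyclic and is supported on a regular CW-complex. The natural supporting complex is the topological product $X \times \Delta_n$, which is regular because both factors are; and the block bidiagonal matrices of Definition \ref{prodef}, built from the $[\partial_i]$ blocks of $\mathcal{F}$ and the $[\operatorname{id}]$ blocks of the simplex differential, are exactly the cellular differentials of this product cell structure. In particular $d_i \circ d_{i+1} = 0$ holds automatically, being the differential of a cellular chain complex (equivalently, it is the Leibniz rule for the product).

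For acyclicity I would invoke Proposition \ref{resmodule}, and the crux is a label computation. Because every vertex of $T_n$ carries the label $1$, a product face $F \times G$ of $X \times \Delta_n$ has vertices $(v,w)$ with label $\mathrm{lcm}(x^{\mathbf{a}_v}, 1) = x^{\mathbf{a}_v}$, so the label of $F \times G$ is $\mathrm{lcm}\{x^{\mathbf{a}_v} : v \in F\} = x^{\mathbf{a}_F}$, independent of $G$. Hence $(X \times \Delta_n)_{\preceq \mathbf{b}} = X_{\preceq \mathbf{b}} \times \Delta_n$, which is homotopy equivalent to $X_{\preceq \mathbf{b}}$ since $\Delta_n$ is contractible, and therefore acyclic over $k$ as $\mathcal{F}$ is a cellular resolution. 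Proposition \ref{resmodule} then yields that $P$ is a cellular resolution.

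For the second assertion I would check that the two projections supplied in Definition \ref{prodef} are genuine morphisms of \textbf{CellRes}, i.e. compatible pairs. The maps $p_1'$ and $p_2'$ are the standard cellular projections of $X \times \Delta_n$ onto $X$ and onto $\Delta_n$, and ${\bf p}_1, {\bf p}_2$ are the evident projections onto the corresponding summands; compatibility is then immediate from the block structure, with $p_{1,0} = \operatorname{id}$ and ${\bf p}_2$ collapsing all labels to $1$ exactly as prescribed.

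The third assertion is where the genuine work lies, and most of it is already done in Proposition \ref{prodapprox}. Given a cellular resolution $\mathcal{Z}$ mapping to both $\mathcal{F}$ and $T_n$, the topological product supplies a unique continuous $\alpha \colon X_{\mathcal{Z}} \to X_P$ making the product diagram commute, and Proposition \ref{prodapprox} produces a cellular approximation $\beta \simeq \alpha$ together with a compatible chain map, giving a morphism of \textbf{CellRes} for which the diagram commutes. What remains is uniqueness up to homotopy, and I expect this to be the main obstacle. As Example \ref{prod1} and Remark \ref{subdiv} show, the mediating morphism is genuinely not unique on the nose: a cell of $\mathcal{Z}$ whose image is subdivided in $P$ may be sent to either half of the subdivided cell. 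The care needed is to argue that these competing choices differ only by a homotopy — which I would deduce from the uniqueness of the underlying topological map $\alpha$, so that any two cellular approximations are homotopic to $\alpha$ and hence to each other, their associated compatible chain maps being correspondingly chain-homotopic.
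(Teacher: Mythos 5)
Your proposal is correct and reaches the same conclusion, but the acyclicity argument takes a genuinely different route from the paper's. The paper argues directly on the differentials: it checks $d_i\circ d_{i+1}=0$ and then asserts that the kernel of $d_i$ is contained in the image because ``the kernel of each component of $d_i$ is contained in the image as they are from cellular resolutions.'' That step is delicate, since the differential of Definition \ref{prodef} is not block-diagonal --- the $[\operatorname{id}]$ blocks couple adjacent components, so $\ker d_i$ is not simply the direct sum of the kernels of the $[\partial_j]$ blocks. Your route instead goes through the supporting complex: you observe that every vertex of $T_n$ is labeled $1$, so the label of a product face $F\times G$ is $x^{\mathbf{a}_F}$, hence $(X\times\Delta_n)_{\preceq\mathbf{b}}=X_{\preceq\mathbf{b}}\times\Delta_n\simeq X_{\preceq\mathbf{b}}$, and Proposition \ref{resmodule} gives acyclicity. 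This is cleaner and arguably repairs the weak point in the paper's kernel argument, at the cost of leaning on the (asserted but not proved in the paper) identification of the matrices of Definition \ref{prodef} with the cellular differential of the product cell structure. For the universal property both you and the paper delegate existence and commutativity to Proposition \ref{prodapprox}; you go slightly further by spelling out why the mediating morphism is unique up to homotopy (all cellular approximations of the unique topological map $\alpha$ are homotopic to one another), a point the paper leaves implicit.
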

\begin{proof}
To show that the product is a cellular resolution, we only need to show that it is an acyclic chain complex as it is the cellular complex of a labelled cell complex. 
A simple computation on the defined differentials shows that $d_i\circ d_{i+1}=0$, so the $\operatorname{im} d_{i+1}\subset \operatorname{ker} d_i$. 
Let us consider the kernel of the map $d_i$. We know that the kernel of each component of $d_i$ is contained in the image as they are from cellular resolutions, and so the whole kernel is. Thus we have that the product construction with $T_n$ gives a cellular resolution. 

A product must also satisfy the universal property, so let us consider the cellular resolution $\mathcal{Z}$ with the property that
$\mathcal{Z}$ maps to both $\mathcal{F}$ and $T_n$. As the product $P$ has the same cell complex as the topological product, and the projection maps associated to it are also same for the topological products, we know that we have a cellular continuous map $h'$ from the cell complex of $\mathcal{Z}$ to the product $P$ by Proposition \ref{prodapprox}. 
\begin{figure}
\begin{center}
\includegraphics[scale=1]{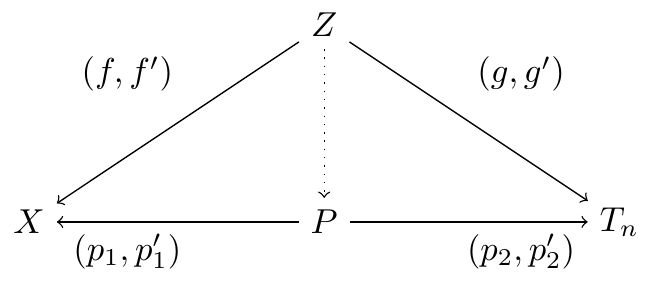}
\end{center}
\caption{A product diagram in {\bf CellRes}.}
\label{proddiag}
\end{figure}
By the same proposition we have that the diagram in the Figure \ref{proddiag} commutes for any of the cellular approximations and a compatible chain map.  Hence we get that the universal property holds up to homotopic choice of a {\bf CellRes} morphism. \end{proof}

The construction used in the product can be applied to any two cellular resolutions. However the resulting cellular resolution may not satisfy any of the product properties. In the case that the two cellular resolutions share labels with greatest common divisor other than 1, we do not even get well defined maps from the product construction to the components. 
\begin{proposition}
Let $H$ be the set of labels in the product construction $P$ for two cellular resolutions $\mathcal{F}$ and $\mathcal{G}$ with label sets $I$ and $J$, respectively.  If the maps from $H$ to $I$ and $J$ are generated by maps coming from the topological product of the cell complexes of $\mathcal{F}$ and $\mathcal{G}$ and they are compatible with chain maps between the resolutions, then  we get that $P$ is a product in {\bf CellRes}.
\end{proposition}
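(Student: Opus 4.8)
The plan is to follow the template already established for the $T_n$ case in Definition \ref{prodef} and the propositions after it, while isolating the precise point at which the hypothesis on the label maps is the ingredient that rescues the argument for two genuinely nontrivial resolutions. There are three things to verify: that $P$ is a cellular resolution, that the two coordinate projections are morphisms in {\bf CellRes}, and that the universal property holds (up to homotopy, in the same sense as the $T_n$ case noted in the remark after Definition \ref{prodef}).

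First I would show that $P$ is a cellular resolution. By construction $P$ is the cellular complex of the labeled cell complex $X\times Y$, where $X$ and $Y$ support $\mathcal{F}$ and $\mathcal{G}$ and where a product cell $\sigma\times\tau$ carries the label $\mathrm{lcm}(\mathbf{a}_\sigma,\mathbf{a}_\tau)$. By Proposition \ref{resmodule} it then suffices to check that $(X\times Y)_{\preceq\mathbf{b}}$ is acyclic over $k$ for every $\mathbf{b}\in\mathbb{N}^n$. The key observation is that the lcm labeling gives
$$(X\times Y)_{\preceq\mathbf{b}}=X_{\preceq\mathbf{b}}\times Y_{\preceq\mathbf{b}},$$
since $\mathrm{lcm}(\mathbf{a}_\sigma,\mathbf{a}_\tau)\preceq\mathbf{b}$ is equivalent to $\mathbf{a}_\sigma\preceq\mathbf{b}$ together with $\mathbf{a}_\tau\preceq\mathbf{b}$. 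Both factors are acyclic because $\mathcal{F}$ and $\mathcal{G}$ are cellular resolutions, and by the K\"unneth formula over the field $k$ a product of two $k$-acyclic spaces is again $k$-acyclic (and an empty factor forces an empty product, consistently with the convention). Hence $P$ is acyclic and is a cellular resolution.

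Next I would check the projections. The topological coordinate projections $X\times Y\to X$ and $X\times Y\to Y$ are cellular, and they are the maps $p_1'$ and $p_2'$ of the construction. By hypothesis the induced label maps $H\to I$ and $H\to J$ are exactly the maps generated by these projections and are compatible with chain maps ${\bf p}_1\colon P\to\mathcal{F}$ and ${\bf p}_2\colon P\to\mathcal{G}$, so that $({\bf p}_1,p_1')$ and $({\bf p}_2,p_2')$ are morphisms in {\bf CellRes}. This is precisely the step that breaks in the general non-coprime situation: there the lcm label on a product cell need not map consistently onto the label of its image, so the projection is not even defined as a {\bf CellRes} morphism. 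This is why the coprimality-type hypothesis is imposed.

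Finally, for the universal property let $\mathcal{Z}$ be a cellular resolution equipped with morphisms to $\mathcal{F}$ and $\mathcal{G}$. Their underlying continuous maps induce, by the topological universal property of $X\times Y$, a unique continuous map $\alpha\colon X_{\mathcal{Z}}\to X\times Y$. As in Proposition \ref{prodapprox}, $\alpha$ need not be cellular, but by the cellular approximation theorem it is homotopic to a cellular map $\beta$; the analysis of Remark \ref{subdiv} shows that $\alpha$ and $\beta$ differ only on cells that get subdivided and that the competing choices for $\beta$ are mutually homotopic. Using the label compatibility of the hypothesis, $\beta$ lifts to a compatible chain map, yielding a {\bf CellRes} morphism $\mathcal{Z}\to P$ that makes the product diagram commute. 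I expect this last step to be the main obstacle: the mediating map is clean and unique on the topological side, but only unique up to homotopy once we pass to a cellular representative because of cell subdivision, and its compatibility with a chain map genuinely rests on the hypothesis that the label maps are compatible, which is what guarantees that the lift of $\beta$ respects the lcm labels of $P$. Combining the three steps shows that $P$ is a product in {\bf CellRes} in the same up-to-homotopy sense as the $T_n$ case.
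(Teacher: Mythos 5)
Your proposal is correct and follows the same overall skeleton as the paper's proof (reduce everything to the machinery already set up for the $T_n$ case, get the projections from the hypothesis on the label maps, and handle the universal property via cellular approximation as in Proposition \ref{prodapprox}), but your treatment of acyclicity is genuinely different and worth contrasting. The paper first converts the hypothesis into the statement that the generator sets of the two label ideals have no non-trivial common divisors, and then simply defers to "the same arguments as in the $T_n$ case," where acyclicity was checked by a direct kernel computation on the explicit block differential of Definition \ref{prodef}. You instead verify acyclicity intrinsically: you observe that the lcm labeling on the product satisfies $(X\times Y)_{\preceq\mathbf{b}}=X_{\preceq\mathbf{b}}\times Y_{\preceq\mathbf{b}}$ because $\mathrm{lcm}(\mathbf{a}_\sigma,\mathbf{a}_\tau)\preceq\mathbf{b}$ holds exactly when both labels do, and then invoke Proposition \ref{resmodule} together with the K\"unneth theorem over $k$. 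This buys you a cleaner and more robust argument that does not depend on the particular matrix form of the differential, and it makes visible exactly where the product structure of the labels enters; what it costs is that it proves acyclicity of the cellular complex of the labeled product rather than literally of the matrix complex written in Definition \ref{prodef}, so you are implicitly using the paper's identification of the two (which the paper itself asserts when it says the associated cell complex is still the topological product). Your use of the hypothesis is also more direct than the paper's: rather than first deducing coprimality of the generators, you read off from the hypothesis that the coordinate projections are already {\bf CellRes} morphisms, which is a faithful and slightly tighter reading of the statement.
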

\begin{proof}
If the ideals associated to $\mathcal{F}$ and $\mathcal{G}$ do not have independent generator sets from each other, that is the generators between the sets have no non-trivial common divisors, then the label maps cannot be compatible. This follows from that we cannot construct a map from the "product" to $\mathcal{F}$ and $\mathcal{G}$.

 Let us assume that the genertor sets $I_{\mathcal{F}}$ and $I_{\mathcal{G}}$ are independent of each other. Then the associated cell complex is still the topological product, so we can take the morphisms as in the $T_n$ case but the distinction that the projection from $P$ to $\mathcal{F}$ maps all labels from $J$ to 1, and labels from $I$ to 1 with the other projection. 

Then we can apply the same arguments as in the $T_n$ product case to construct the cellular resolution using the cellular approximation to the topological product map. 
\end{proof}
\begin{remark}
The above proposition gives that we have the product with any cellular resolution with  cell complex having labels 1. 
\end{remark}

\subsection{The coproduct}
Next we move on the coproduct. Unlike with the product, both in {\bf Top} and $\mathcal{C}_{\bullet}(\bf{Mod}_S)$ the (finite) coproduct is a disjoint union. Thus the construction can be lifted to celluar resolutions directly.

\begin{definition}
\label{coprod}
The coproduct, $\mathcal{F}\sqcup \mathcal{G}$, of two cellular resolutions $\mathcal{F}$ and $\mathcal{G}$ is a direct sum of the cellular resolutions, so we have $(\mathcal{F}\sqcup \mathcal{G})_i=\mathcal{F}_i\oplus \mathcal{G}_i$. The labeled cell complex supporting the coproduct resolution is the disjoint union of the two labeled cell complexes, which also is the coproduct of cell complexes. 
\end{definition}

\begin{proposition}
The coproduct defined in Definition \ref{coprod} is a cellular resolution and satisfies the definition of category-theoretical coproduct.
\end{proposition}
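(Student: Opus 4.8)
The plan is to split the statement into two tasks: first verifying that $\mathcal{F}\sqcup\mathcal{G}$ is a legitimate object of {\bf CellRes}, and then checking the universal property of a coproduct. For the first task I would appeal directly to Proposition \ref{resmodule}. The disjoint union of the two labeled cell complexes is a regular CW-complex of the form $X\sqcup Y$, and its cellular complex is exactly the degreewise direct sum $\mathcal{F}_i\oplus\mathcal{G}_i$, because the boundary of a face in $X$ involves only faces of $X$ (and likewise for $Y$), so the differentials are block-diagonal. Since $\mathcal{F}$ and $\mathcal{G}$ are resolutions, each component satisfies the acyclicity condition $X_{\preceq\mathbf{b}}$ and $Y_{\preceq\mathbf{b}}$, and Proposition \ref{resmodule} then guarantees that $X\sqcup Y$ satisfies $(X\sqcup Y)_{\preceq\mathbf{b}}$ acyclic for every $\mathbf{b}$ and resolves the corresponding direct sum of quotient modules. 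Thus $\mathcal{F}\sqcup\mathcal{G}$ is a cellular resolution, and the standard inclusions $\iota_{\mathcal{F}}$ and $\iota_{\mathcal{G}}$ are compatible pairs, since the chain inclusion into a direct summand is compatible with the topological inclusion of a connected component.

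For the universal property, I would take a cellular resolution $\mathcal{Z}$ together with morphisms $({\bf f},f)\colon\mathcal{F}\to\mathcal{Z}$ and $({\bf g},g)\colon\mathcal{G}\to\mathcal{Z}$ and build the mediating morphism componentwise. On the algebraic side, $\mathcal{C}_{\bullet}({\bf Mod}_S)$ has the direct sum as its finite coproduct, so there is a unique chain map ${\bf h}\colon\mathcal{F}\oplus\mathcal{G}\to\mathcal{Z}$ with ${\bf h}\circ\iota_{\mathcal{F}}={\bf f}$ and ${\bf h}\circ\iota_{\mathcal{G}}={\bf g}$, namely ${\bf h}(a,b)={\bf f}(a)+{\bf g}(b)$. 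On the topological side, {\bf Top} has the disjoint union as its coproduct, giving a unique continuous map $h\colon X\sqcup Y\to Z$ restricting to $f$ and $g$; since $f$ and $g$ are cellular, so is $h$. This produces a candidate pair $({\bf h},h)$.

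The step requiring the most care, and the main obstacle, is verifying that $({\bf h},h)$ is genuinely compatible in the sense required by the definition of a {\bf CellRes} morphism, so that it is an actual morphism rather than merely a pair of maps. Here the disjoint-union structure is what saves us: every face of $X\sqcup Y$ lies entirely in one component, so the compatibility conditions relating $h_0$ to the induced label map and relating the action of $h$ on faces to the action of ${\bf h}$ on the corresponding generators decouple into the compatibility of $({\bf f},f)$ on $X$ and of $({\bf g},g)$ on $Y$, both of which hold by hypothesis. Finally, uniqueness of $({\bf h},h)$ follows from uniqueness in each factor: the chain map is forced by the universal property in $\mathcal{C}_{\bullet}({\bf Mod}_S)$ and the cellular map by the universal property in {\bf Top}, so the pair is unique and $\mathcal{F}\sqcup\mathcal{G}$ is a coproduct in {\bf CellRes}.
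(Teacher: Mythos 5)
Your proposal is correct and follows essentially the same route as the paper: both establish that the direct sum is acyclic (you via Proposition \ref{resmodule}, the paper via preservation of exactness under direct sums), both obtain the mediating maps from the coproduct universal properties in $\mathcal{C}_{\bullet}({\bf Mod}_S)$ and {\bf Top} separately, and both verify compatibility of the resulting pair by observing that it reduces, component by component, to the compatibility of the given morphisms. Your phrasing of the compatibility check --- that every face of $X\sqcup Y$ lies in a single component so the conditions decouple --- is a slightly cleaner way of stating the paper's generator-by-generator commutativity argument, but it is the same idea.
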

\begin{proof}
Let $\mathcal{F}\sqcup\mathcal{ G}$ be the coproduct of $\mathcal{F}$ and $\mathcal{G}$ as defined above. 
The direct sum of two cellular resolutions is still a cellular resolution, as direct sums of chain complexes preserve exactness. The differentials are just the maps for the disjoint cell complex.
The maps from $\mathcal{F}$ and $\mathcal{G}$ to $\mathcal{F}\sqcup \mathcal{G}$ are embeddings of the cellular resolutions. 

Lastly, for this to be a coproduct we need the universal property. Let $\mathcal{Z}$ be a cellular resolution such that both $\mathcal{F}$ and $\mathcal{G}$ map to it. Since Definition \ref{coprod} is the same for chain complexes and topological spaces, we have a unique topological map and a unique chain map from $\mathcal{F}\sqcup \mathcal{G}$ to $\mathcal{Z}$. To show that these two maps are compatible with each other and form a unique {\bf CellRes} morphism consider the diagram of the coproduct. We have commutativity so the maps $\mathfrak{a}=({\bf a},a)\colon \mathcal{F}\rightarrow\mathcal{ F}\sqcup \mathcal{G}$, $\mathfrak{b}=({\bf b},b)\colon\mathcal{F}\rightarrow \mathcal{Z}$ and $({\bf e}, e)\colon\mathcal{F}\sqcup \mathcal{G}\rightarrow\mathcal{Z}$ satisfy ${\bf a}={\bf e}\circ{\bf b}$ and $ a= e\circ b$.
Let $x$ be a cell in the cell complex of $\mathcal{F}$ and $e_x$ the generator associated to it. From the cellular resolution maps $\mathfrak{a}$ and $\mathfrak{b}$ we know that ${\bf a}(e_x)$ corresponds to the cell $a(x)$, and ${\bf b}(e_x)$ corresponds to the cell $b(x)$. Commutativity tells us that $({\bf e}, e)$ will satisfy compatibility for all elements in $\mathcal{F}\sqcup \mathcal{G}$ coming from $\mathcal{F}$. Since the arguments also hold for $\mathcal{G}$, we get that $\mathfrak{e}=({\bf e},e)$ is a cellular resolution map.\end{proof}

\begin{proposition}
The category {\bf CellRes} has all finite coproducts.
\end{proposition}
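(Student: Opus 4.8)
The plan is to reduce the statement to two facts already established: that {\bf CellRes} has an initial object, and that it admits binary coproducts. Recall the standard categorical principle that a category has all finite coproducts precisely when it has an initial object (serving as the empty coproduct) together with a coproduct of every pair of objects; all larger finite coproducts are then obtained by iteration, and associativity emerges automatically from the universal property. The initial object $0\leftarrow S\leftarrow 0$ supported on the empty complex was exhibited above, and the preceding proposition established that any two cellular resolutions $\mathcal{F}$ and $\mathcal{G}$ have a coproduct $\mathcal{F}\sqcup \mathcal{G}$, namely the degreewise direct sum supported on the disjoint union of the underlying cell complexes. Hence the remaining work is purely the induction.

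Concretely, I would prove by induction on $n$ that a family $\mathcal{F}_1,\ldots,\mathcal{F}_n$ of cellular resolutions has a coproduct. The case $n=0$ is the initial object, $n=1$ is trivial, and $n=2$ is the previous proposition. For the inductive step I set $\mathcal{F}_1\sqcup\cdots\sqcup\mathcal{F}_n:=(\mathcal{F}_1\sqcup\cdots\sqcup\mathcal{F}_{n-1})\sqcup\mathcal{F}_n$, using the inductive hypothesis to form the inner coproduct and then the binary coproduct once more. Unwinding the iteration, this object is the degreewise direct sum with $(\mathcal{F}_1\sqcup\cdots\sqcup\mathcal{F}_n)_i=\bigoplus_{j=1}^n (\mathcal{F}_j)_i$, supported on the disjoint union $\bigsqcup_{j=1}^n X_j$ of the supporting cell complexes. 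This disjoint union is again a regular CW-complex, and by Proposition \ref{resmodule} it carries a cellular resolution (resolving $\bigoplus_j S/I_j$), so the candidate object genuinely lies in {\bf CellRes}. The structure maps are the evident inclusions $\iota_j\colon \mathcal{F}_j\hookrightarrow \mathcal{F}_1\sqcup\cdots\sqcup\mathcal{F}_n$, each a compatible pair built from the inclusion of a summand into the direct sum and the inclusion of the corresponding component of the disjoint union.

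To verify the universal property, take any cellular resolution $\mathcal{Z}$ with morphisms $\mathfrak{t}_j=({\bf t}_j,t_j)\colon \mathcal{F}_j\rightarrow \mathcal{Z}$. Repeated application of the binary universal property yields a morphism $\mathfrak{t}\colon \mathcal{F}_1\sqcup\cdots\sqcup\mathcal{F}_n\rightarrow \mathcal{Z}$ with $\mathfrak{t}\circ\iota_j=\mathfrak{t}_j$ for each $j$, and uniqueness propagates through the same iteration. The only place where any real argument is needed is to check that the chain map and cellular map assembled from the ${\bf t}_j$ and $t_j$ are compatible, so that $\mathfrak{t}$ is an honest {\bf CellRes} morphism. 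This is exactly the verification carried out in the binary case: since the components $X_j$ are pairwise disjoint, every cell of $\bigsqcup_j X_j$ lies in a unique $X_j$, and the compatibility conditions relating $f_0$ to $\varphi_f$ and the higher $f_i$ to the cellular data are checked generator-by-generator, with no interaction between distinct summands. Thus compatibility of the assembled maps reduces to compatibility of each $\mathfrak{t}_j$, which holds by hypothesis. I expect this compatibility bookkeeping to be the only genuine obstacle, and it is mild; everything else is the standard iteration of binary coproducts atop the initial object.
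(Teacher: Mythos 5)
Your proof is correct and takes essentially the same route as the paper, namely iterating the already-established binary coproduct to obtain all finite coproducts. You additionally handle the nullary case via the initial object and spell out the compatibility bookkeeping, which the paper's much terser proof leaves implicit.
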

\begin{proof}
We have that the coproduct of any two cellular resolutions exists. Then one can compute the coproduct of finitely many cellular resolutions by taking the coproduct inductively. At each step this is still the coproduct of two cellular resolutions, and so we have that each finite coproduct is in {\bf CellRes}.
\end{proof}

\begin{remark}
We only consider finite cellular resolutions, so an infinite coproduct would produce an infinite cellular resolution and hence we do not have infinite coproducts.
\end{remark}

\begin{definition}
Let $I$ be a set, and $\mathcal{F}$ a cellular resolution. Then the repeated coproduct over $I$, $\sqcup_{i\in I}\mathcal{F}$, is called the \emph{copower} (over $I$) and denoted by $I\odot \mathcal{F}$
such that the morphisms satisfy $Hom(I\odot \mathcal{F}, \mathcal{G})\cong Hom(\mathcal{F},\mathcal{G})^{I}$ and it is natural in $\mathcal{G}$.
\end{definition}

\subsection{Tensor product}
\label{tprod}
The category $\bf{CellRes}$ can be given a tensor product structure.   

\begin{definition} 
\label{tensorprod}
Let $\mathcal{F}$ and $\mathcal{G}$ be any two cellular resolutions with $\mathcal{F}_i=S^{\beta_{F,i}}$ and $\mathcal{G}_j=S^{\beta_{G,j}}$. 
The tensor product of the two resolutions, $\mathcal{F}\otimes \mathcal{G}$, is given by
 $$(\mathcal{F}\otimes \mathcal{G})_k=\bigoplus_{i+j=k}\mathcal{F}_i\otimes \mathcal{G}_j.$$
 
 The differential $d_{k+1}:(\mathcal{F}\otimes \mathcal{G})_{k+1}\rightarrow (\mathcal{F}\otimes \mathcal{G})_k$ is given by 
the matrix for the standard tensor product of chain complexes, with entries simplified such that each column has greatest common divisor 1.
\end{definition}

As defined above the tensor product can be written as a bifunctor $\otimes:{\bf CellRes}\times{\bf CellRes}\rightarrow{\bf CellRes}$.  Also the modules involved are free modules, so  $\mathcal{F}_i\otimes \mathcal{G}_j=S^{\beta_{F,i}\beta_{G,j}}$, and $x\otimes y\in \mathcal{F}_i\otimes \mathcal{G}_j$ corresponds to the element $(x_1y_1,\dots,x_1y_{\beta_{G,j}},\ldots, x_{\beta_{F,i}}y_{\beta_{G,j}})\in S^{\beta_{F,i}\beta_{G,j}}$. 

\begin{remark}
The definition of the tensor product is almost the same as for chain complexes. Indeed on the object level they are the same but the differentials in the  tensor product of $\mathcal{C}_{\bullet}(\bf{Mod}_S)$ would not give an acyclic complex.
\end{remark}

\begin{proposition}
The labeled cell complex of the tensor product of $\mathcal{F}$ and $\mathcal{G}$ is the join of the complexes of $\mathcal{F}$ and $\mathcal{G}$.
\end{proposition}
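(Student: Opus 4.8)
The plan is to reduce the claim to two independent checks: that the underlying (unlabelled) cell complex of $\mathcal{F}\otimes\mathcal{G}$ is the join $X\ast Y$ of the cell complexes $X$ and $Y$ of $\mathcal{F}$ and $\mathcal{G}$, and that the monomial labels on this join agree with those forced by Definition~\ref{tensorprod}. The first is essentially classical; the second is where the real work lies.

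First I would fix the bookkeeping. Recall that $(\mathcal{F}_X)_i$ has one free generator $e_\sigma$ for each face $\sigma$ of $X$ with $\dim\sigma=i-1$, and similarly for $\mathcal{G}$ and $Y$. A face of $X\ast Y$ is a join $\sigma\ast\tau$ of a face $\sigma$ of $X$ and a face $\tau$ of $Y$ (either allowed to be empty, but not both), with $\dim(\sigma\ast\tau)=\dim\sigma+\dim\tau+1$. Hence if $\dim\sigma=i-1$ and $\dim\tau=j-1$ then $\sigma\ast\tau$ has dimension $i+j-1$ and contributes to homological degree $i+j$, matching the summand $\mathcal{F}_i\otimes\mathcal{G}_j$ of $(\mathcal{F}\otimes\mathcal{G})_{i+j}$ under the assignment $e_\sigma\otimes e_\tau\mapsto\sigma\ast\tau$. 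The cases $i=0$ and $j=0$ recover the faces of $Y$ alone and of $X$ alone (with $\mathcal{F}_0=S$ read as the empty face of $X$), so the correspondence is a bijection on faces. Under the paper's shifted indexing (dimension $=$ degree $-1$) this is exactly the classical identification $\tilde C(X\ast Y)\cong \tilde C(X)\otimes\tilde C(Y)$ of reduced chain complexes: the topological boundary $\partial(\sigma\ast\tau)=(\partial\sigma)\ast\tau\cup\sigma\ast(\partial\tau)$ is the combinatorial content of the Leibniz differential used in Definition~\ref{tensorprod}, and the Koszul sign $(-1)^i$ agrees with the incidence signs of the join because the $\operatorname{sign}$ function of Proposition~\ref{idk} lets us orient $\sigma\ast\tau$ from the orientations of $\sigma$ and $\tau$. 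This settles the unlabelled statement.

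The main obstacle is matching the labels. In the join the label of $\sigma\ast\tau$ is the least common multiple of its vertex labels, namely $\operatorname{lcm}(x^{\mathbf a_\sigma},x^{\mathbf a_\tau})$, i.e. the coordinatewise maximum $\max(\mathbf a_\sigma,\mathbf a_\tau)$. The naive graded tensor product $S(-\mathbf a_\sigma)\otimes_S S(-\mathbf a_\tau)=S(-\mathbf a_\sigma-\mathbf a_\tau)$ instead places $e_\sigma\otimes e_\tau$ in degree $\mathbf a_\sigma+\mathbf a_\tau$, which is precisely why the un-normalised tensor complex fails to be acyclic. I would show that the normalisation ``each column has greatest common divisor $1$'' of Definition~\ref{tensorprod} is exactly the diagonal rescaling that divides the generator $e_\sigma\otimes e_\tau$ by $x^{\min(\mathbf a_\sigma,\mathbf a_\tau)}$, thereby moving it into degree $\max(\mathbf a_\sigma,\mathbf a_\tau)=\mathbf a_{\sigma\ast\tau}$. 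The computation that makes this work is the monomial identity
$$\max(\mathbf a_\sigma,\mathbf a_\tau)-\max(\mathbf a_\rho,\mathbf a_\tau)=(\mathbf a_\sigma-\mathbf a_\rho)-\min(\mathbf a_\sigma,\mathbf a_\tau)+\min(\mathbf a_\rho,\mathbf a_\tau),$$
valid for any facet $\rho$ of $\sigma$ (and symmetrically in $\tau$), which says exactly that after the rescaling the coefficient $x^{\mathbf a_\sigma-\mathbf a_\rho}$ of the Leibniz differential becomes the coefficient $x^{\mathbf a_{\sigma\ast\tau}-\mathbf a_{\rho\ast\tau}}$ prescribed by the cellular differential of the labelled complex $X\ast Y$.

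Finally I would verify that this rescaling is the one, and the only one, produced by the gcd-$1$ condition. The point is that for the labelled join every differential column already has greatest common divisor $1$: the column of a face $F$ has entries $x^{\mathbf a_F-\mathbf a_G}$ over its facets $G$, and since $\partial F$ is the union of the closures of the $(n-1)$-cells (Proposition~\ref{idk}) every coordinate of $\mathbf a_F$ is already attained at some facet, so the column gcd is $x^{\mathbf a_F-\max_G\mathbf a_G}=1$. Thus the rescaling by $x^{\min(\mathbf a_\sigma,\mathbf a_\tau)}$ does achieve gcd-$1$ columns, and because imposing gcd $1$ on a column pins down the scale of its source generator once the scales of the target generators are fixed, an induction up the homological degree shows this rescaling is forced. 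With faces, boundary maps, signs, and labels all matched, the labelled cellular complex of $X\ast Y$ is identical to $\mathcal{F}\otimes\mathcal{G}$, which is the assertion.
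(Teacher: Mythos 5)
Your proof is correct in substance and goes considerably further than the paper's own argument, which is a two-sentence combinatorial observation: the paper simply reads off from the generators of $(\mathcal{F}\otimes\mathcal{G})_k$ that the vertices are unchanged, that the complexes of $\mathcal{F}$ and $\mathcal{G}$ sit inside the tensor product, and that the new cells connect them as in a join. It never addresses the differentials, the orientation signs, or --- most importantly --- the labels, which is exactly where you put the work. Your identification of the face bijection $e_\sigma\otimes e_\tau\mapsto\sigma\ast\tau$ with the classical isomorphism $\tilde C(X\ast Y)\cong\tilde C(X)\otimes\tilde C(Y)$ (shifted as in the paper's indexing), and your analysis showing that the gcd-$1$ normalisation of Definition~\ref{tensorprod} is precisely the rescaling of $e_\sigma\otimes e_\tau$ from degree $\mathbf a_\sigma+\mathbf a_\tau$ to the lcm degree $\max(\mathbf a_\sigma,\mathbf a_\tau)$, supply the content that actually makes the proposition true as a statement about \emph{labeled} complexes. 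This is a genuine strengthening: the paper's tensor product is only loosely specified (``entries simplified such that each column has greatest common divisor $1$''), and your argument is the first place this is pinned down and shown to be consistent with the cellular differential of the labeled join.

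One correction to your last paragraph: the claim that \emph{every} differential column of the labelled join has gcd $1$ fails in homological degree $1$, where the column of a vertex $v$ has the single entry $x^{\mathbf a_v}$ (its only facet is the empty face, labelled $1$); this is visible in the paper's own Example~\ref{tensorexample}, where $d_1=[ab\ b^2\ cd\ ab\ bc\ cd]$. Your facet argument via Proposition~\ref{idk} is valid only for faces of dimension at least $1$. This does not damage the proof --- your rescaling factor $x^{\min(\mathbf a_\sigma,\mathbf a_\tau)}$ is trivially $1$ on the degree-$1$ generators, since one of $\sigma,\tau$ is then the empty face --- but the induction ``up the homological degree'' should start by declaring the degree-$0$ and degree-$1$ generators unrescaled and then apply the gcd-$1$ condition only to the columns of $d_k$ for $k\ge 2$, which is how the normalisation is actually used in the paper.
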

\begin{proof}
We can compute the associated cell complex from the defined cellular resolution for the tensor product. We see that the vertices stay the same and that the cell complexes of $\mathcal{F}$ and $\mathcal{G}$ are contained in the tensor product. The new edges are formed to connect vertices of the components and respectively the higher dimensional faces. So this is the join of the  complexes. \end{proof}

\begin{remark}
In the case that the label ideals of $\mathcal{F}$ and $\mathcal{G}$ have coprime generators the differential in the tensor product is just the usual differential of the tensor product of chain complexes. 
\end{remark}

\begin{proposition}
The tensor product defined as above is a cellular resolution.
\end{proposition}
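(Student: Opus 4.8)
The plan is to combine the description of the supporting complex with the acyclicity criterion of Proposition~\ref{resmodule}. By the preceding proposition the labeled cell complex underlying $\mathcal{F}\otimes\mathcal{G}$ is the join $X\ast Y$ of the complexes $X,Y$ of $\mathcal{F},\mathcal{G}$, equipped with the $\operatorname{lcm}$-labeling; in particular $\mathcal{F}\otimes\mathcal{G}$ is exactly the cellular complex $\mathcal{F}_{X\ast Y}$ of a labeled regular CW-complex, and each of its terms is a free $S$-module. It is worth pausing here to note that the gcd-normalization in Definition~\ref{tensorprod} is precisely the device that replaces each algebraic tensor entry $x^{\mathbf{a}_\sigma-\mathbf{a}_{\sigma'}}$ by the cellular-differential entry $x^{\operatorname{lcm}(\mathbf{a}_\sigma,\mathbf{a}_\tau)-\operatorname{lcm}(\mathbf{a}_{\sigma'},\mathbf{a}_\tau)}$ of the join, so that the two differentials really do agree. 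Thus the only remaining task is to prove that $\mathcal{F}_{X\ast Y}$ is acyclic.

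For this I would apply Proposition~\ref{resmodule}: it suffices to show that $(X\ast Y)_{\preceq\mathbf{b}}$ is acyclic over $k$ for every $\mathbf{b}\in\mathbb{N}^n$. The key combinatorial observation is the identity
\[
(X\ast Y)_{\preceq\mathbf{b}}=X_{\preceq\mathbf{b}}\ast Y_{\preceq\mathbf{b}}.
\]
Every face of $X\ast Y$ is a join $\sigma\ast\tau$ of a face $\sigma$ of $X$ (or the empty face) with a face $\tau$ of $Y$ (or the empty face), and its label is $\operatorname{lcm}(\mathbf{a}_\sigma,\mathbf{a}_\tau)$; hence $\sigma\ast\tau$ has label $\preceq\mathbf{b}$ if and only if $\mathbf{a}_\sigma\preceq\mathbf{b}$ and $\mathbf{a}_\tau\preceq\mathbf{b}$, which is exactly the condition defining the join on the right.

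Finally I would invoke the reduced homology of a join over a field, $\tilde{H}_n(A\ast B)\cong\bigoplus_{i+j=n-1}\tilde{H}_i(A)\otimes_k\tilde{H}_j(B)$. Because $\mathcal{F}$ and $\mathcal{G}$ are themselves cellular resolutions, Proposition~\ref{resmodule} applied to each of them gives that $X_{\preceq\mathbf{b}}$ and $Y_{\preceq\mathbf{b}}$ are acyclic, i.e.\ their reduced homology vanishes in all degrees $\geq 0$ (it can be nonzero only in degree $-1$, and only when the complex is empty). A summand on the right is nonzero only if both factors have nonzero reduced homology, forcing $i=j=-1$ and hence $n=-1$; so $\tilde{H}_n(X_{\preceq\mathbf{b}}\ast Y_{\preceq\mathbf{b}})=0$ for all $n\geq 0$, which is the required acyclicity. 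By Proposition~\ref{resmodule} this shows that $\mathcal{F}\otimes\mathcal{G}=\mathcal{F}_{X\ast Y}$ is a cellular resolution.

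I expect the delicate point to lie in the first paragraph rather than in the homology computation: one must be sure that the column-wise gcd-normalization genuinely reproduces the $\operatorname{lcm}$-labeled join differential in full generality (not only in the coprime case recorded in the earlier remark), together with correct bookkeeping of the empty faces in the identity $(X\ast Y)_{\preceq\mathbf{b}}=X_{\preceq\mathbf{b}}\ast Y_{\preceq\mathbf{b}}$. Once the supporting complex is known to be the labeled join --- which the preceding proposition supplies --- the join formula makes the acyclicity automatic.
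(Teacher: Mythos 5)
Your argument is correct, but it is a genuinely different route from the paper's. The paper proves acyclicity by direct linear algebra on the differential: it decomposes $d_{k+1}$ into blocks $\delta_{ji}$, writes out the entries of the block coming from $1\otimes d^G$ explicitly (entry-wise division by a gcd with the generator label $g_s$), and argues component by component that acyclicity is preserved; the freeness and cellular support are asserted from the definition and the join proposition. You instead push everything to topology: having identified $\mathcal{F}\otimes\mathcal{G}$ with the cellular complex of the lcm-labeled join $X\ast Y$, you invoke the criterion of Proposition~\ref{resmodule}, prove the combinatorial identity $(X\ast Y)_{\preceq\mathbf{b}}=X_{\preceq\mathbf{b}}\ast Y_{\preceq\mathbf{b}}$, and finish with the K\"unneth formula for joins over a field, with the degree $-1$ bookkeeping for empty subcomplexes handled correctly. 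What your approach buys is a genuinely complete and checkable acyclicity argument --- the paper's ``from this form one can see that the acyclicity is preserved in the component'' is the weakest step of its proof, and your join-homology computation replaces it with a standard theorem. What it costs is that the entire weight now rests on the identification of the normalized tensor differential with the cellular differential of the labeled join, and you are right to flag this as the delicate point: the paper's Definition~\ref{tensorprod} normalizes column-wise by a gcd, while its own acyclicity proof uses an entry-wise gcd with $g_s$, and neither formula visibly reproduces the join entry $x^{\operatorname{lcm}(\mathbf{a}_\sigma,\mathbf{a}_\tau)-\operatorname{lcm}(\mathbf{a}_\sigma,\mathbf{a}_{\tau'})}$ coordinate by coordinate (e.g.\ exponents $a=1$, $b'=2$, $b=4$ give $2$ for the join entry but $1$ for the entry-wise gcd normalization). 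So if you want your proof to be airtight you should take the labeled join as the \emph{definition} of the supporting complex and verify the differential identification yourself rather than leaning on the preceding proposition, whose proof is informal; once that is in place, your homological argument is cleaner than the paper's.
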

\begin{proof}
Firstly, we know that the chain complex defined by the tensor product is made of free modules (tensor product of free modules is free).
We also have that it is supported on a cell complex, so it has a cellular struture. It remains to show the tensor product is acyclic.
The matrix for the differential consists of $(k+1)\times(k+2)$ submatrices, where the $ji$th matrix, denoted by $\delta_{ji}$, is the map from $S^{\beta_{F,i-1}} \otimes S^{\beta_{G,k+2-i}}$ to $S^{\beta_{F,j-1}} \otimes S^{\beta_{G,k+1-j}}$ and is of the size $\beta_{F,j-1}\beta_{G,k+1-j}\times\beta_{F,i-1}\beta_{G,k+2-i}$. The matrix $\delta_{ji}$ has nonzero entries if and only if $i=j$ or $j=i-1$.
Let us look at the case $i=j$ in more detail.
 The positions of the nonzero entries come from the map $1\otimes d_{k+2-i}^G$ identified with a $\beta_{F,j-1}\beta_{G,k+1-j}\times\beta_{F,i-1}\beta_{G,k+2-i}$ matrix. The image $1\otimes d_{k+2-i}^G$ applied to $x\otimes y\in S^{\beta_{F,i-1}} \otimes S^{\beta_{G,k+2-i}}$ is identified with the element
$$\left(\begin{array}{c}
	x_1\sum_{\alpha=1}^{\beta_{G,k+2-i}}(d_{k+2-i}^G)_{1\alpha}y_{\alpha}\\
	\vdots\\
	x_1\sum_{\alpha=1}^{\beta_{G,k+2-i}}(d_{k+2-i}^G)_{\beta_{G,k+1-i}\alpha}y_{\alpha}\\
	x_2\sum_{\alpha=1}^{\beta_{G,k+2-i}}(d_{k+2-i}^G)_{1\alpha}y_{\alpha}\\
	\vdots\\
	x_{\beta_{F,i-1}}\sum_{\alpha=1}^{\beta_{G,k+2-i}}(d_{k+2-i}^G)_{\beta_{G,k+1-i}\alpha}y_{\alpha}\\
\end{array}\right).$$
This can be seen coming from a matrix where the rows are indexed by $uv$ and the columns by $st$, and the $uv,st$ entry is nonzero if and only if $u=s$ and it is given by $(d_{k+2-i}^G)_{vt}$.

The map $\delta_{ji}$ is given by the matrix with the same row and column index as above, and the entries are zero unless $u=s$. Then the $uv,st$ entry is given by $(d_{k+2-i}^G)_{vt}/gcd((d_{k+2-i}^G)_{vt},g_s)$ where $g_s$ is the $s$th generator of the module $S^{\beta_{F,i-1}}$ coming from the labelling. 
From this form one can see that the acyclicity is preserved in the component of the differential.
We can apply the similar argument to the case $j=i-1$, and also get that it preserver acyclicity.
Therefore, the tensor product resolution is acyclic.\end{proof}

With the defined tensor product for cellular resolutions we have the following result. The reader may refer to Section \ref{catsection} for the categorical definitions.
\begin{proposition}
\label{monoid}
The tensor product defined above gives the category {\bf CellRes} a monoidal structure.
\end{proposition}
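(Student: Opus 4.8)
The plan is to verify each datum of the monoidal structure by transporting it to the corresponding, well-understood structure on labeled cell complexes, using the preceding proposition that the labeled complex underlying $\mathcal{F}\otimes\mathcal{G}$ is the join of the underlying complexes of $\mathcal{F}$ and $\mathcal{G}$ with face labels given by least common multiples. Since a cellular resolution is determined up to isomorphism by its labeled cell complex, every coherence datum I need can be read off from the join together with the $\operatorname{lcm}$ operation on monomials. First I would record that $\otimes$ is a bifunctor, as already observed after Definition \ref{tensorprod}: on objects it produces the join, and on morphisms it acts by the tensor product of the chain maps paired with the join of the cellular maps, which is again a compatible pair. For the unit I would take $e=(0\leftarrow S\leftarrow 0)$ supported on the empty complex, whose single generator sits in homological degree $0$ and carries the label $1$. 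Because the join of any complex with the empty complex is that complex and $\operatorname{lcm}(m,1)=m$, the tensor formula gives $(\mathcal{F}\otimes e)_k=\bigoplus_{i+j=k}\mathcal{F}_i\otimes e_j=\mathcal{F}_k$ on the nose (and symmetrically on the left), so the unitors $\lambda$ and $\rho$ may be taken to be identity isomorphisms in {\bf CellRes}.

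Next I would construct the associator. The crucial point is that the join of labeled cell complexes is associative and that $\operatorname{lcm}$ is strictly associative, so the labeled complexes $(X*Y)*Z$ and $X*(Y*Z)$ coincide. Hence $(\mathcal{F}\otimes\mathcal{G})\otimes\mathcal{H}$ and $\mathcal{F}\otimes(\mathcal{G}\otimes\mathcal{H})$ are the cellular resolution of one and the same labeled complex, namely the triple join, and the canonical reindexing of the summands $\bigoplus_{i+j+l=k}\mathcal{F}_i\otimes\mathcal{G}_j\otimes\mathcal{H}_l$ yields a natural isomorphism $\alpha$ realized by a compatible pair: its cellular component is the canonical homeomorphism of iterated joins and its chain component is the standard associativity isomorphism of triple tensor products. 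Naturality of $\alpha$, $\lambda$, and $\rho$ then follows from functoriality of $\otimes$ together with the naturality of these canonical maps in each variable.

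For the triangle and pentagon identities I would use that a morphism in {\bf CellRes} is a compatible pair $({\bf f},f)$ and that composition is componentwise, so a diagram of such pairs commutes exactly when its image under the forgetful functor $\Phi$ to $\mathcal{C}_{\bullet}({\bf Mod}_S)$ and its image under $\Psi$ to {\bf Top} both commute. The $\Phi$-images are the coherence diagrams of the tensor product of chain complexes, which commute since $\mathcal{C}_{\bullet}({\bf Mod}_S)$ is monoidal; the $\Psi$-images are the coherence diagrams for the join, which commute since the join is a coherently associative operation with the empty complex as unit. The one point demanding care — and the step I expect to be the main obstacle — is the $\operatorname{gcd}$-simplification built into the differential of Definition \ref{tensorprod}. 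Because this simplification is precisely what converts the additive grading of the algebraic tensor product into the $\operatorname{lcm}$ grading of the join, I must check that carrying it out on $(\mathcal{F}\otimes\mathcal{G})\otimes\mathcal{H}$ yields the same differential as carrying it out on $\mathcal{F}\otimes(\mathcal{G}\otimes\mathcal{H})$. This reduces exactly to the associativity of $\operatorname{lcm}$ on the three label sets, which holds, so the associator is compatible with the simplified differentials and every coherence square closes; consequently {\bf CellRes} is monoidal under $\otimes$.
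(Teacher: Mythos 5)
Your proposal is correct and follows the same skeleton as the paper's proof: the void resolution $0\leftarrow S\leftarrow 0$ as unit, unitors obtained from the degree computation $(\mathcal{F}\otimes E)_k=\mathcal{F}_k$, and an associator coming from the strict equality of the iterated tensor products on the module level. Where you go further is in the verification of the coherence axioms: the paper dismisses the triangle and pentagon identities with ``a simple computation,'' whereas you reduce them to the corresponding (known) coherence in $\mathcal{C}_{\bullet}({\bf Mod}_S)$ and in {\bf Top} via the two forgetful functors, using that a diagram of compatible pairs commutes iff both component diagrams do. You also isolate the one genuinely nonstandard ingredient --- the $\operatorname{gcd}$-normalization of the differential in Definition \ref{tensorprod} --- and observe that its compatibility with the associator rests on the associativity of $\operatorname{lcm}$ on the label sets; the paper's proof does not address this point at all, so your elaboration closes a small gap in the published argument rather than merely restating it.
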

\begin{proof}
We take the tensor product as defined in Definition \ref{tensorprod} to be our bifunctor $\otimes: {\bf CellRes}\times{\bf CellRes}\rightarrow{\bf CellRes}$.
Take the void resolution, $E:0\leftarrow S\leftarrow0$, as the object $e$ of the monoidal category. 
Define a natural transformation $\alpha:(-\otimes -)\otimes -\rightarrow -\otimes(-\otimes -)$. For $\alpha$ to be a natural isomorhism we need that $(\mathcal{F}\otimes \mathcal{G})\otimes \mathcal{H}\xrightarrow{\alpha}\mathcal{F}\otimes(\mathcal{G}\otimes \mathcal{H})$ is an isomorphism. Using the definition of the tensor product and that on module level it is the same as for the chain complexes, we have that
$$(\mathcal{F}\otimes \mathcal{G})\otimes \mathcal{H}=\mathcal{F}\otimes(\mathcal{G}\otimes \mathcal{H}).$$
Hence we get that $\alpha$ defines a natural isomorphism. 
 
 Let us consider the natural transformations $\lambda:(E\otimes -)\rightarrow -$ and $\rho:(-\otimes E)\rightarrow -$.
 By the definition of natural transformation we have the commutative diagram for $\lambda$, and for any $\mathcal{F},\mathcal{G}\in{\bf CellRes}$ and any morphism $f:\mathcal{F}\rightarrow \mathcal{G}$,
 
 $$\begin{array}{ccc}
	\mathcal{F}\otimes E&\xrightarrow{\lambda_F} & \mathcal{F}\\
	
	\downarrow &&\downarrow \\

	\mathcal{G}\otimes E&\xrightarrow{\lambda_G}& \mathcal{G}
\end{array}.$$
 
 Since $E_j=S$ if $j=0$ and 0 otherwise, we can compute that $(\mathcal{F}\otimes E)_k=\bigoplus_{i+j=k}\mathcal{F}_i\otimes E_j=\bigoplus_{i+0=k}\mathcal{F}_i\otimes S=\mathcal{F}_k$ for any $\mathcal{F}\in{\bf CellRes}$. It is not hard to see that $\lambda_F$ is an isomorphism in {\bf CellRes}, and that $\lambda$ is a natural isomorphism. The same argument can be used for $\rho$ to show that it is also a natural isomorphism. 
 
 A simple computation shows that the triangle and pentagon equalities are also satisfied.
\end{proof}

\begin{example}
\label{tensorexample}
Let $\mathcal{F}$ be the cellular resolution
$$0\leftarrow S\leftarrow S^3\leftarrow S^3\leftarrow S\leftarrow0$$
coming from the complex $F$ in Figure \ref{tensorexamplepic},
and let $\mathcal{G}$ be the cellular resolution
$$0\leftarrow S\leftarrow S^3\leftarrow S^2\leftarrow 0$$ from the complex $G$ in Figure \ref{tensorexamplepic}.
\begin{figure}

\begin{center}
\includegraphics[scale=1]{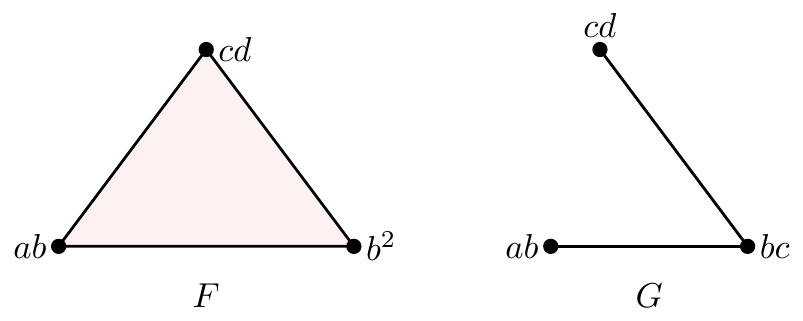}
\end{center}
\caption{Associated cell complexes for the cellular resolutions in Example \ref{tensorexample}.}
\label{tensorexamplepic}
\end{figure}
Then their tensor product is the cellular resolution
$$0\leftarrow S\leftarrow S^3\oplus S^3\leftarrow S^2\oplus S^9\oplus S^3\leftarrow S^6\oplus S^9\oplus S\leftarrow S^3\oplus S^6\leftarrow S^2\leftarrow0$$
with differentials
$$d_1=[ab\ b^2\ cd\ ab\ bc\ cd]$$
$$d_2=\left[\begin{array}{cccccccccccccc}
	-c&0		&1&0&0&b&0&0&cd&0&0		&0&0&0\\
	a&-d		&0&a&0&0&0&0&0&0&0		&0&0&0\\
	0&b		&0&0&ab&0&0&b^2&0&0		&1&0&0&0\\
	
	0&0		&-1&-c&-cd&0&0&0&0&0&0	&-b&cd&0\\
	0&0		&0&0&0&-a&-c&-cd&0&0&0	&a&0&-cd\\
	0&0		&0&0&0&0&0&0&-ab&-b&1	&0&-ab&b^2\\
\end{array}\right]$$
\small{$$d_3=\left[\begin{array}{cccccccccccccccc}
	1&0&b&0&d&0	&0&0&0&0&0&0&0&0&0			&0\\
	0&a&0&b&0&1	&0&0&0&0&0&0&0&0&0			&0\\
	
	c&0&0&0&0&0	&-b&0&0&cd&0&0&0&0&0			&0\\
	-1&d&0&0&0&0	&a&0&0&0&0&0&cd&0&0			&0\\
	0&-1&0&0&0&0	&0&0&0&-ad&0&0&b^2&0&0		&0\\
	0&0&c&0&0&0	&0&-b&0&0&cd&0&0&0&0			&0\\
	0&0&-a&d&0&0	&0&a&0&0&0&0&0&-cd&0			&0\\
	0&0&0&-1&0&0	&0&0&0&0&-ad&0&0&b^2			&0\\
	0&0&0&0&1&0	&0&0&-b&0&0&cd&0&0&0			&0\\
	0&0&0&0&-a&1	&0&0&a&0&0&0&0&0&-cd			&0\\
	0&0&0&0&0&-b	&0&0&0&0&0&-ad&0&0&b^2		&0\\
	0&0&0&0&0&0	&0&0&0&0&0&0&0&0&0			&cd\\
	0&0&0&0&0&0	&0&0&0&0&0&0&0&0&0			&b\\
	0&0&0&0&0&0	&0&0&0&0&0&0&0&0&0			&a
\end{array}\right]$$}

$$d_4=\left[\begin{array}{ccccccccc}
	0&0&0		&-b&0&cd&0&0&0\\
	0&0&0		&0&-b&0&cd&0&0\\
	0&0&0		&a&0&0&0&-cd&0\\
	0&0&0		&0&a&0&0&0&-cd\\
	0&0&0		&0&0&-ab&0&b^2&0\\
	0&0&0		&0&0&0&-ab&0&b^2\\
	
	cd&0&0	&-c&0&0&0&0&0\\
	b&0&0		&0&0&-c&0&0&0\\
	a&0&0		&0&0&0&0&-c&0\\
	0&cd&0	&a&0&d&0&0&0\\
	0&b&0		&0&0&a&d&0&0\\
	0&a&0		&0&0&0&0&a&d\\
	0&0&cd	&0&b&0&0&0&0\\
	0&0&b		&0&0&0&b&0&0\\
	0&0&a		&0&0&0&0&0&b\\
	-ab&-bc&-cd&0&0&0&0&0&0
\end{array}\right]$$
$$d_5=\left[\begin{array}{cc}
	-c&0\\
	a&-d\\
	0&b\\
	cd&0\\
	0&cd\\
	b&0\\
	0&b\\
	a&0\\
	0&a
\end{array}\right]$$

The join of the cell complexes in Figure \ref{tensorexamplepic} is four dimensional cell complex. 

\end{example} 

\section{Limits and colimits}
As with the earlier constructions, we know what the limits are for cell complexes and chain complexes. In the case of limits, and later on colimits, we also know that the categories {\bf Top} and $\mathcal{C}_{\bullet}(\bf{Mod}_S)$ are (co)complete, thus they have all limits and colimits.  However we know that in general limits do not exist in {\bf CellRes} as we do not have the products in general. 
\begin{definition}
A diagram $D$  in $\bf{CellRes}$ is a functor $D: \mathcal{I}\rightarrow {\bf CellRes}$ where $\mathcal{I}$ is a finite indexing category.
\end{definition}

\begin{remark}
When denoting the cellular resolutions in diagrams we use the superscript $D^i$ with $i\in \mathcal{I}$. This is to avoid confusion with the homological degree of the components of the cellular resolution.
\end{remark}

\subsection{Limits}

In general, we can show that the limits in chain complexes as given in Definition \ref{limit}  preserve acyclicity, however just being acyclic is not enough to be a cellular resolution.
\begin{definition}
\label{limit}
Let $D$ be a diagram in $\bf{CellRes}$, and denote the cellular resolutions in it by $D^i$. By definition the limit $L$ of the diagram, if it exists, is the cellular resolution that has a morphism $({\bf f}^i,g^i)$ to each  $D^i$, such that any triangles commute, and $L$ must satisfy the universal property. 
\end{definition}

The product behaviour would suggest problems with the limit when there is non-connected cellular resolutions in the diagram, so for now we restrict ourselves to inverse limits rather than the general limits.

An inverse limit is a limit where the diagram is an inverse system.
\begin{definition}
\label{invsys}
The \emph{inverse system} is given by the following.
Let $(I,\leq)$ be a directed poset. Let $(\mathcal{F}_i)_{i\in I}$ be a collection of cellular resolutions with morphisms $({\bf f},g)_{ij}:\mathcal{F}_j\rightarrow\mathcal{F}_i$ for all $i\leq j$, such that $({\bf f},g)_{ii}$ is the identity and $({\bf f},g)_{ik}=({\bf f},g)_{ij}\circ({\bf f},g)_{jk}$ for all $i\leq j\leq k$.
\end{definition}

We have that for particular class of inverse limits they always exist in {\bf CellRes}.
\begin{proposition}
Let $(\mathcal{F}_i)_{i\in I}$ be a finite inverse system of cellular resolutions such that the underlying poset is a tree. Then 
the inverse limit of $(\mathcal{F}_i)_{i\in I}$ exists in {\bf CellRes}.
\end{proposition}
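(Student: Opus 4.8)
The plan is to exploit the fact that a finite directed poset always has a greatest element, so that the tree structure collapses the inverse limit onto a single object already present in the diagram. First I would observe that since $(I,\le)$ is finite and directed, iterating the formation of pairwise upper bounds produces an element $m\in I$ with $i\le m$ for every $i\in I$; the tree hypothesis simply says that the Hasse diagram of $I$ is a rooted tree whose root is this maximum $m$. I claim the inverse limit is the resolution $\mathcal{F}_m$ sitting at the root, equipped with the structure morphisms $({\bf f},g)_{im}\colon\mathcal{F}_m\to\mathcal{F}_i$ as its projections.

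Next I would verify the cone conditions and the universal property directly. The family $\{({\bf f},g)_{im}\}_{i\in I}$ is a cone over the diagram because the inverse-system axioms give $({\bf f},g)_{ij}\circ({\bf f},g)_{jm}=({\bf f},g)_{im}$ for all $i\le j\le m$. For the universal property, let $\mathcal{Z}$ be any cellular resolution equipped with compatible morphisms $({\bf t}^i,s^i)\colon\mathcal{Z}\to\mathcal{F}_i$ satisfying $({\bf f},g)_{ij}\circ({\bf t}^j,s^j)=({\bf t}^i,s^i)$. The mediating morphism is forced to be the single leg $({\bf t}^m,s^m)$: it is already a morphism of {\bf CellRes} by hypothesis, it satisfies $({\bf f},g)_{im}\circ({\bf t}^m,s^m)=({\bf t}^i,s^i)$ by the cone condition, and it is unique because evaluating any candidate at the index $i=m$ and using $({\bf f},g)_{mm}=\operatorname{id}$ pins it down. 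Since $\mathcal{F}_m$ is by assumption a cellular resolution, the limit exists inside {\bf CellRes}.

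The conceptual point — and the reason the statement is restricted to this setting — is that no product or fibre product is ever formed: the limit is literally an object already in the diagram, so we never meet the obstruction from the earlier kernel/cokernel discussion, where the kernel of a map of free modules may fail to be free, nor the merely-up-to-homotopy uniqueness that afflicts the product construction of Section \ref{prod}. I expect the only genuine care to be needed in the uniqueness clause: in {\bf CellRes} a mediating cellular map can a priori fail to be unique when it must subdivide cells of a product complex, so the main thing to check is that here, because the apex maps to the single object $\mathcal{F}_m$ rather than to a subdivided product, the cellular map $s^m$ is honestly unique and genuinely compatible with ${\bf t}^m$. By contrast, dropping directedness so that the tree were rooted at a minimum would replace $\mathcal{F}_m$ by an iterated fibre product whose underlying modules need not be free, and the argument would break precisely at the freeness requirement.
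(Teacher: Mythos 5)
Your proposal is correct and takes essentially the same approach as the paper: both identify the inverse limit with the resolution sitting at the top element of the poset (your $\mathcal{F}_m$, the paper's $\mathcal{F}_r$) and then verify the cone conditions and the universal property using the inverse-system composition axioms. The only cosmetic difference is that the paper first computes the degreewise chain-complex limit explicitly as a submodule of the product and shows it collapses to $(\mathcal{F}_r)_k$, whereas you verify the universal property of $\mathcal{F}_m$ directly; the key observation is identical.
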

\begin{proof}
Let $(\mathcal{F}_i)_{i\in I}$ be an inverse system in {\bf CellRes}, and let $r$ be the index of the upper bound element in the poset. 
Let $L$ denote its limit as a chain complex. Then we know that $L_k$ is the inverse limit of modules $(\mathcal{F}_i)_k$, written explicitly as
$$L_k=\lim_{\substack{\leftarrow\\ i\in I}} (\mathcal{F}_i)_k=\left\{(a_1,a_2,\ldots,a_r)\in\prod_{i\in I}(\mathcal{F}_i)_k\ \colon\  a_i=(f_{ij})_k(a_j) \textrm{ for all }i\leq j\in I\right\}.$$
Since the poset is a tree, and $r\geq i$ for all $i\in I$, we can write the module as
$$L_k=\left\{(f_{1r})_k(a_r),(f_{2r})_k(a_r),\ldots,(f_{(r-1)r})_k(a_r),a_r)\in\bigoplus_{i\in I}(\mathcal{F}_i)_k\right\}\cong (\mathcal{F}_r)_k.$$
Furthermore we know that the differentials are the same as in $\mathcal{F}_r$, due to the squares of the maps from $L$ to the diagram being commutative. So we have that $L\cong \mathcal{F}_r$.

Next we want to show that $L$ also satisfies the commutativity requirements for the cellular maps and universal property.
The map from $L$ to $\mathcal{F}_i$ is $({\bf f},g)_{ir}:L\cong \mathcal{F}_r\rightarrow\mathcal{F}_i$. Then by the composition rules for the maps defined in Definition \ref{invsys} we have that for any $i\leq j$, $({\bf f},g)_{ir}=({\bf f},g)_{ij}\circ({\bf f},g)_{jr}$. So $L$ satisfies the commutativity condition of an inverse limit. 
Let $Z$ be a cellular resolution, and suppose that $Z$ maps to every component of $(\mathcal{F}_i)_{i\in I}$. Again any triangles we have must be commutative, so in particular if $\alpha:Z\rightarrow\mathcal{F}_r$ and $\beta: Z\rightarrow \mathcal{F}_ i$, then $\beta=({\bf f},g)_{ir}\circ\alpha$. If $Z$ maps to $L$ all maps must factor through it, in particular $\alpha:Z\rightarrow\mathcal{F}_r$ is then a map composed with identity we get $\alpha:Z\rightarrow L$. With the earlier observation of factoring maps we get that $L$ satisfies the universal property.\end{proof}

\subsection{Colimits}

The situation with colimits is better than with limits. We do infact have (finite) colimits in {\bf CellRes}. The following proposition show the existence and also recalls the definition of colimit.

\begin{proposition}
\label{colim}
Let $D$ be a finite diagram in {\bf CellRes}. Let $C$ be the colimit of the diagram $D$ as colimit of chain complexes with maps ${\bf  f}^i:C\rightarrow D^i$, and let $X$ be the topological colimit of the associated cell complexes in the diagram with maps $f^i:X\rightarrow X^i$. Then $C$ is the cellular complex of $X$,  together with maps $({\bf f}^i,f^i): C\rightarrow D^i$.
\end{proposition}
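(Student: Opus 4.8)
The plan is to prove the proposition by showing that the degreewise module colimit $C$ and the topological colimit $X$ describe the same object from the two sides, and then verifying the universal property in {\bf CellRes}. Throughout I use that colimits in $\mathcal{C}_\bullet({\bf Mod}_S)$ are computed degreewise (Section \ref{topcc}) and that {\bf Top} has all finite colimits. First I would put a labeled regular CW-structure on $X=\mathrm{colim}\,X^i$. Each $X^i$ is a regular CW-complex and, since the morphisms of the diagram are cellular resolution maps, the structure maps are cellular and respect labels through the associated label maps $\varphi_g$. Hence the cells of the topological colimit are the equivalence classes of cells of the $X^i$ under the identifications forced by the diagram, and each such class inherits a well-defined monomial label, namely the least common multiple of the labels of its representatives (cf. Definition \ref{gluedef}). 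This makes $X$ a labeled cell complex.

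Next I would match the two colimits degree by degree. In degree $k$, $C_k=\mathrm{colim}\,D^i_k$ is a colimit of free $S$-modules whose generators are indexed by the $(k-1)$-cells of the $X^i$; the identifications appearing in this module colimit are exactly those induced on generators by the structure maps, which coincide with the cellular identifications. Therefore $C_k$ is free on the $(k-1)$-cells of $X$, so $C_k=(\mathcal{F}_X)_k$. The differentials agree as well: the differential of $\mathcal{F}_X$ is determined by the sign function and the monomial labels, both preserved under the identifications, and this is precisely the colimit of the differentials of the $D^i$. Thus $C=\mathcal{F}_X$ as chain complexes.

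It then remains to see that $C=\mathcal{F}_X$ is acyclic, so that it is a genuine cellular resolution, and this is the step I expect to be the main obstacle. Module colimits are only right exact, so acyclicity cannot be inherited from the $D^i$ by a purely algebraic argument as in the coproduct case; instead I would argue topologically using Proposition \ref{resmodule}, reducing acyclicity of $C$ to the statement that $X_{\preceq \mathbf{b}}$ is acyclic over $k$ for every $\mathbf{b}\in\mathbb{N}^n$. Since $X_{\preceq \mathbf{b}}$ is the colimit of the acyclic subcomplexes $(X^i)_{\preceq \mathbf{b}}$, the task is to show that the gluings and identifications produced by a finite diagram do not create new homology; this is the delicate point, and it is where the finiteness of the diagram and the regularity of the complexes (Proposition \ref{idk}) must be used.

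Finally I would verify the universal property in {\bf CellRes}. The cocone maps $\iota^i\colon D^i\to C$ and $f^i\colon X^i\to X$ are the colimit structure maps in $\mathcal{C}_\bullet({\bf Mod}_S)$ and {\bf Top}; by the cell-to-generator correspondence established above they are compatible, so each $(\iota^i,f^i)$ is a morphism of {\bf CellRes}. Given any cellular resolution $W$ with compatible cocone morphisms $(\mathbf{t}^i,t^i)\colon D^i\to W$, the universal properties in $\mathcal{C}_\bullet({\bf Mod}_S)$ and in {\bf Top} yield a unique chain map and a unique cellular map out of $C$ and $X$; I would check, exactly as in the coproduct proof, that these are compatible and therefore assemble into a unique {\bf CellRes} morphism $C\to W$. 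This gives $C$ the universal property of the colimit.
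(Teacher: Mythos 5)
Your first two steps---putting a labeled CW-structure on the topological colimit $X$ with lcm labels on identified cells, and matching $C_k$ degreewise with the free module on the $(k-1)$-cells of $X$---follow the same route as the paper, and your verification of the universal property mirrors the coproduct argument as the paper implicitly does. The genuine gap is the acyclicity step, which you explicitly leave open: you correctly observe that module colimits are not exact and that acyclicity of $C$ cannot be inherited formally from the $D^i$, you propose reducing it to the topological acyclicity of $X_{\preceq \mathbf{b}}$ via Proposition \ref{resmodule}, and then you stop, calling this ``the delicate point.'' A proof proposal that identifies the hard step without resolving it is not a proof. The paper closes this step differently (and very tersely): it takes an element $x\in\ker d_k$ and claims $x\in\operatorname{im} d_{k+1}$ by a diagram chase on the colimit diagram of Figure \ref{colimpic}, rather than passing through the $X_{\preceq\mathbf{b}}$ criterion at all.

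Two further cautions about the route you sketch. First, your claim that $X_{\preceq\mathbf{b}}$ is the colimit of the subcomplexes $(X^i)_{\preceq\mathbf{b}}$ is not automatic: under the gluing of Definition \ref{gluedef} an identified cell carries the lcm of the labels of its representatives, so a cell of $X^i$ with label $\preceq\mathbf{b}$ may be identified with one whose resulting label is not $\preceq\mathbf{b}$ and hence fall out of $X_{\preceq\mathbf{b}}$; this needs an argument, not just the finiteness of the diagram. Second, your degreewise identification of $C_k$ with $(\mathcal{F}_X)_k$ tacitly assumes the structure maps send generators to (signed) generators; compatible chain maps in {\bf CellRes} may carry monomial coefficients, in which case the degreewise colimit is a cokernel that need not be free on the identified cells. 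The paper makes the same assumption, but since you are reconstructing the proof you should either justify it or restrict the class of diagrams.
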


\begin{proof}
Let $D$ be a finite diagram of cellular resolutions. Let $X^i$ be the cell complex of the $D^i$ cellular resolution, and let $X$ be the topological colimit of the $X^i$'s of the diagram $D$. 

We view $X$ as the space obtained by gluing the cell complexes together with the labels given by Definition \ref{gluedef}. The dimension $k$ faces of $X$ are given as the disjoint union of the dimension $k$ faces of $X^i$'s. One then identifies the $F\in X^i$ with $F'\in X^j$ if there is $f:i\rightarrow j$ such that $D(f)(F)\supseteq F'$.
By definition the cellular free complex of $X$ is given as $\mathcal{F}_k=\bigoplus_{\substack{F\in X\\ dim F=k-1}}S(-{\bf a}_F)$.
We can write the free module as $$\mathcal{F}_k=\bigoplus_{\substack{F\in \coprod X^i\\ dim F=k-1}}S(-{\bf a}_F)/\sim$$ where $S(-{\bf a}_F)\sim S(-{\bf a}_F')$ if $F\sim F'$ due to the gluing observation above. Identifying the free modules with one generator is equivalent to identifying their generators $e_F\sim e_{F'}$. We can write this as
 $$\mathcal{F}_k=\left\{(a_{F_1},a_{F_2},\ldots,a_{F_r})\in\coprod_{\substack{F\in \coprod X^i\\ dim F=k-1}}S(-{\bf a}_F): a_{F_i}\sim a_{F_j}\  \mathrm{  if  }\   e_F\sim e_{F'} \textrm{ and }\right\}.$$
Let $C$ be the chain complex colimit of $D$. By definition of colimit in $\mathcal{C}_{\bullet}(S\textrm{-mod})$, 
$$C_k=\left\{(a_1,a_2,\ldots,a_r)\in \coprod_{i\in I}D^i_k: a_i\sim a_j \mathrm{ if } f_{ij}(a_i)=a_j\right\}.$$
Since the modules come from the cellular resolutions, we know that $D^i_k=\bigoplus_{F\in X^i}S(-{\bf a}_F)$. If there is a map $D^i_k\rightarrow D^j_k$ then we know that the generators map to generators (with multiple $\pm 1$). Thus
$$C_k=\left\{(a_1,a_2,\ldots,a_r)\in \coprod_{i\in I}\bigoplus_{F\in X^i}S(-{\bf a}_F): a_i\sim a_j \mathrm{ if } f_{ij}(e_F)=e_{F'}\right\}.$$
Now it is not hard to see that $C_k$ and $F_k$ are the same module. Therefore we have that $C$ is the cellular complex of $X$.

As the cellular free complex is both the colimit in {\bf Top} and $\mathcal{C}_{\bullet}(S\textrm{-mod})$ we get that it inherits the colimit structure from those categories. What remains to show is that it in fact is a cellular resolution, which means we want to show it is acyclic.
Let $L$ denote the colimit as given above. As with the limit we can draw a diagram with $D^i$ and $D^j$ cellular resolutions shown in Figure \ref{colimpic}. 
\begin{figure}

\includegraphics[scale=1]{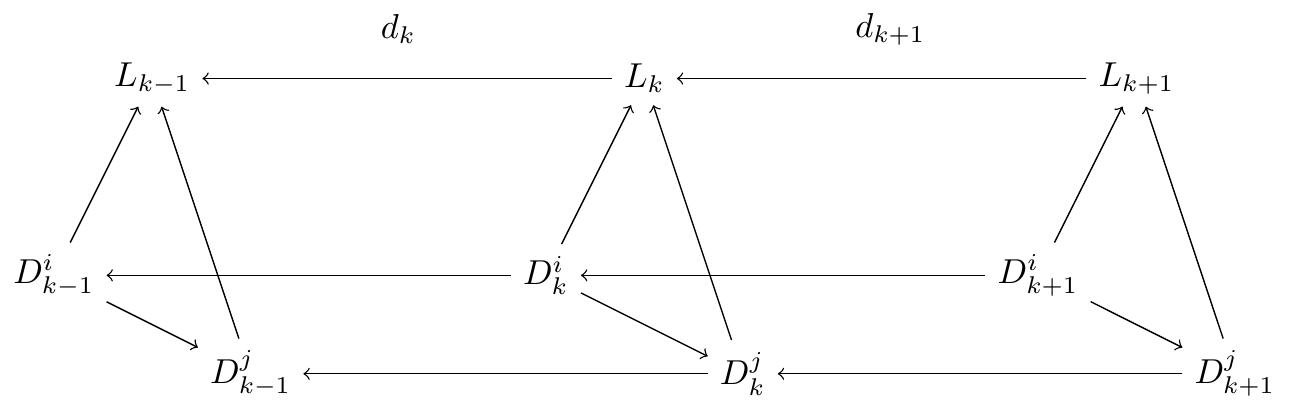}
\caption{Diagram of the maps in colimit.}
\label{colimpic}
\end{figure}
Let $x\in \mathrm{ker }\ d_k$. Using diagram chasing on the diagram in the Figure \ref{colimpic}, we get that $x\in \mathrm{im }\ d_{k_1}$. 
Therefore $L$ is acyclic and hence a cellular resolution.\end{proof}

As a natural corollary to the existence of colimit we have the following.
\begin{corollary}
{\bf CellRes} is a finitely cocomplete category.
\end{corollary}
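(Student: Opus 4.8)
The plan is to reduce the statement directly to Proposition \ref{colim}, which already carries essentially all of the weight. By definition, a category is \emph{finitely cocomplete} precisely when every diagram $D:\mathcal{I}\rightarrow{\bf CellRes}$ indexed by a finite category $\mathcal{I}$ admits a colimit. Proposition \ref{colim} constructs, for any such finite diagram, an object $C$ (the cellular complex of the glued cell complex $X$) together with the structure maps $({\bf f}^i,f^i)$, and it verifies both that $C$ is a genuine cellular resolution and that it satisfies the universal property inherited from the colimits in {\bf Top} and $\mathcal{C}_{\bullet}({\bf Mod}_S)$. So for every nonempty finite diagram the colimit exists in {\bf CellRes}, and there is nothing further to build.

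The one case to treat separately is the empty diagram, whose colimit is by convention the initial object. This is already available: we showed earlier that the cellular resolution $0\leftarrow S\leftarrow0$ supported on the empty complex is an initial object in {\bf CellRes}. Hence the empty finite diagram also has a colimit, and no gap is left at the degenerate end of the range of index shapes.

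Combining these two observations, every finite diagram in {\bf CellRes} has a colimit, which is exactly the assertion that {\bf CellRes} is finitely cocomplete, completing the argument.

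I do not expect a genuine obstacle here, since the corollary is a formal consequence of the preceding proposition. The only points that warrant care are confirming that Proposition \ref{colim} is stated for an arbitrary finite indexing category $\mathcal{I}$ rather than for a single restricted diagram shape, and explicitly accounting for the empty diagram so that the initial object is included in the claim; both are handled above.
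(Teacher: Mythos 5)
Your proposal is correct and follows essentially the same route as the paper: both reduce the statement to Proposition \ref{colim}, which constructs the colimit of an arbitrary finite diagram. Your explicit treatment of the empty diagram via the initial object $0\leftarrow S\leftarrow 0$ is a small extra point of care that the paper's proof omits but that is already covered by its earlier proposition on the initial object.
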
 
\begin{proof} As the above Proposition \ref{colim} holds for any finite diagram of cellular resolutions that has both limit as chain complex and cell complex, we get that we have all finite colimits in {\bf CellRes} since {\bf Top} and $\mathcal{C}_{\bullet}(S\textrm{-mod})$ contain all colimits. Thus {\bf CellRes} is a finitely cocomplete category.\end{proof}

\section{Homotopy colimits}
\label{hocolim}


We begin with an example of homotopy colimit for the cell complexes.
\begin{example}
\label{hocolimex1}
Let $D$ be a diagram of three cellular resolutions $F,G$ ad $H$ with the cell complexes as in the Figure \ref{hcolim}.
\begin{figure}
\begin{center}
\includegraphics[scale=1]{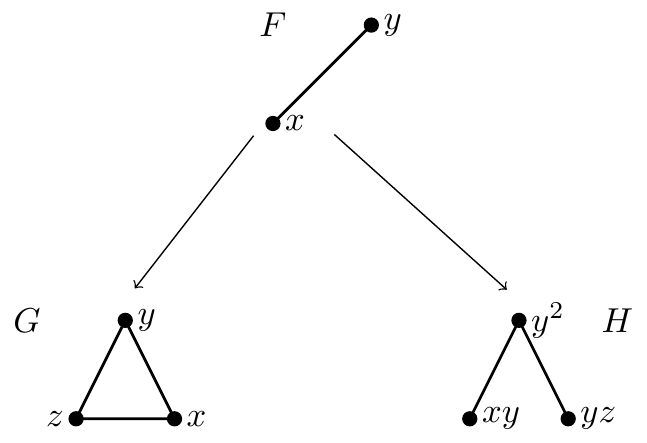}
\end{center}
\caption{Diagram $D$.}
\label{hcolim}
\end{figure}
The resolutions are a follows: $F$ is $0\leftarrow S\leftarrow S^2\rightarrow S^2\leftarrow 0$, $G$ is $0\leftarrow S\leftarrow S^3\leftarrow S^3\leftarrow S\leftarrow 0$, and $H$ is $0\leftarrow S\leftarrow S^3\leftarrow S^2\leftarrow 0$.
The map from $F$ to $G$ is the identity embedding and the map from $F$ to $H$ is the embedding by multiplying with $y$.
Computing the topological homotopy colimit by gluing in mapping cones we get the labeled cell complex in the Figure \ref{hcolim2}. 
This cell complex has the cellular complex
$$0\leftarrow S\leftarrow S^8\leftarrow S^{10}\leftarrow S^3\leftarrow 0$$
with maps 
$$d_0=[x\ y\ x\ y\ z\ xy\ y^2\ yz],$$

 $$d_1=\left[\begin{array}{cccccccccc}
-y& 0&0&0&0&0&-1&0&-y&0\\
x&0&0&0&0&0&0&-1&0&-y\\
0&-z&0&-y&0&0&1&0&0&0\\
0&0&-z&x&0&0&0&1&0&0\\
0&x&y&0&0&0&0&0&0&0\\
0&0&0&0&-y&0&0&0&1&0\\
0&0&0&0&x&-z&0&0&0&1\\
0&0&0&0&0&y&0&0&0&0\\
\end{array}\right],\ \mathrm{ and}$$

$$d_2=\left[\begin{array}{ccc}
0&-1&y\\
-y&0&0\\
x&0&0\\
z&1&0\\
0&0&-1\\
0&0&0\\
0&y&0\\
0&-x&0\\
0&0&-y\\
0&0&x
\end{array}\right].$$
From the maps we can see that the cellular complex is acyclic, hence a resolution.
\begin{figure}

\begin{center}
\includegraphics[scale=1]{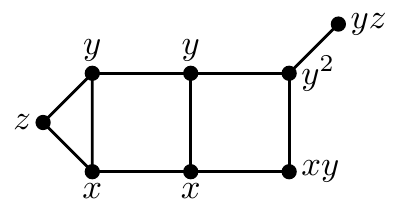}
\end{center}
\caption{Homotopy colimit of the diagram $D$.}
\label{hcolim2}
\end{figure}
\end{example}

The previous example motivates us to lift the gluing mapping cylinders definition to {\bf CellRes}.

\begin{definition}
\label{gluehocolim}

Let $D$ be a diagram in {\bf CellRes} with finite indexing category $\mathcal{I}$. Let $D^i$ and $D^j$ be resolutions in $D$, and let us denote by $\mathfrak{f}_{ij}=({\bf f}^{ij}, f^{ij})$ the morphism between them if we have a map $\psi:i\rightarrow j$ in $\mathcal{I}$.
We have a mapping cylinder for each morphism $\mathfrak{f}_{ij}$. The $k$th piece of the mapping cylinder is given by $D^i_k\oplus D^j_k\oplus M^{ij}_k$, with differentials $d_k(a_i,a_j,m)=(d^i_k(a_i)+f^{ij}_i(m),d^j(a_j)+f^{ij}_j(m),f^{ij}_{ij}(m))$.

Then the homotopy colimit obtained by gluing in mapping cylinders to the coproduct  is the resolution with $k$-th piece 
$$\left(\oplus_{i\in\mathcal{I}}D^i_k\right)\oplus\left(\oplus_{i\rightarrow j\in\mathcal{I}}M^{ij}_k\right)$$
with differentials
$$d_k(a_1,a_2,\ldots,a_r,m_{11},m_{12},\ldots,m_{rr})=(d^i_k(a_i)+\sum_{i\rightarrow j}f^{ij}_i(m),\sum f^{ij}_{ij}(m)).$$

\end{definition}

Recall that Proposition \ref{gluemap} states that gluing in mapping cylinders into a diagram is a cellular resolution. Therefore the above definition also is a cellular resolution.

The homotopy colimit as defined in Definition \ref{gluehocolim} can easily give a very large cellular resolution that is far from minimal. However, it can be homotopy equivalent to a smaller one. For instance in Example \ref{hocolimex1} we could remove the middle square without changing any important properties. 

Recall from topology that the homotopy colimit can be defined as the direct sum $\cup_{a\in \mathcal{I}}B(I\downarrow a)\times D_a$ quotient by some relations (see Definition \ref{tophocolim}).

This then gives a second definition for the homotopy colimit in {\bf CellRes}. First we need to define the geometric realization in {\bf CellRes}.
\begin{definition}
Let $X$ be a simplicial set. We define the \emph{geometric realization} of $X$ in {\bf CellRes} to be the free resolution coming from the geometric realization of $X$ in {\bf Top} with labels 1 on each vertex. 
\end{definition}

\begin{definition}
\label{prodhocolim}
Let $\mathcal{I}$ be a finite small category and let $D$ be a diagram in {\bf CellRes}. Let $D^i$ and $D^j$ be resolutions in $D$, and let the morphism between them be denoted by $\mathfrak{f}_{ij}=({\bf f}^{ij}, f^{ij})$  if we have a map $\psi:i\rightarrow j$ in $\mathcal{I}$. Then we define the \emph{homotopy colimit} to be the direct sum
$$\sqcup B(i\downarrow \mathcal{I})\times D^i$$
 quotient by a relation $\sim$. Here $B(i\downarrow \mathcal{I})$ is the geometric realization of the nerve of the category under $i$, $(\mathcal{I}\downarrow i)$, and $D^i$ is the element in the diagram $D$ associated to the element $i\in \mathcal{I}$.
We have the maps $\mathfrak{a}:B(j\downarrow \mathcal{I})\times  D^i\rightarrow B(j\downarrow \mathcal{I})\times  D^j$ and $\mathfrak{b}:B(j\downarrow \mathcal{I})\times  D^i\rightarrow B(i\downarrow \mathcal{I})\times  D^i$, given by 
$$\mathfrak{a}(p,x)=(p,\mathfrak{f}_{ij}(x))$$
and
$$\mathfrak{b}(p,x)=(\delta_{ji}(p),x)$$
for every map $i\rightarrow j\in \mathcal{I}$, where $\delta_{ji}:B(j\downarrow \mathcal{I})\hookrightarrow B(i\downarrow \mathcal{I})$.
Then the quotient is given by the relation $\mathfrak{a}(p,x)\sim \mathfrak{b}(p,x)$. 

\end{definition}

\begin{example}
\label{hocolimex2}
Let the diagram and cellular resolutions be as in Example \ref{hocolimex1} and Figure \ref{hcolim}. We use the Definition \ref{prodhocolim} to compute the homotopy colimit for the diagram. 

The indexing category $\mathcal{I}$ has three objects, say $a,b$ and $c$ for $F$, $G$, and $H$ respectively. We compute the geometric realization of the nerve of the category under each of the objects.
Starting with $B(a\downarrow \mathcal{I}) $, the simplicial set $(a\downarrow \mathcal{I})$ has chains of length zero coming from the objects $a\rightarrow b$ and $a\rightarrow c$, and $a\rightarrow a$. Then it also has chains of length one from $(a\rightarrow a)\rightarrow (a\rightarrow b)$ and $(a\rightarrow a)\rightarrow (a\rightarrow c)$, and the ones coming from identity maps. The higher degree ones are given by adding identity maps to the length one chains. 
Then $B(a\downarrow \mathcal{I})$ is given as a cell complex by 
$\sqcup_{n\geq 0}(a\downarrow \mathcal{I})_n\times \Delta_n/\sim$. Computing this, we see that the $n>1$ parts coming from the identity maps are identified to a point, and the lower ones give one directed edge for each non-identity map. We get that the cell complex is $\bullet\leftarrow\bullet\rightarrow\bullet$. The resolution for it is $0\leftarrow S\leftarrow S^3\leftarrow S^2\leftarrow 0$. 
Similarly, we get that $B(b\downarrow \mathcal{I})=B(c\downarrow \mathcal{I})=\Delta_0$, and they have the resolution $0\leftarrow S\leftarrow S\leftarrow0$.

Next we have the disjoint union of 
$$B(a\downarrow \mathcal{I})\times F\sqcup \Delta_0\times G\sqcup\Delta_0\times H.$$
Each of the products is with trivially labeled cellular resolution. Thus they exist and are cellular resolutions. We know that the generators of the product, and also the labels of the associated cell complex are coming from the $D^i$ in this case. 
 Then we take the quotient by the relation $\mathfrak{a}_i(p,x)=\mathfrak{b}_i(p,x)$ where
$$\mathfrak{a}_1:\Delta_0\times  F\rightarrow \Delta_0\times  G,$$
$$\mathfrak{b}_1:\Delta_0\times  F\rightarrow B(a\downarrow \mathcal{I})\times  F,$$
$$\mathfrak{a}_2:\Delta_0\times  F\rightarrow \Delta_0\times  H,$$
$$\mathfrak{b}_2:\Delta_0\times  F\rightarrow B(a\downarrow \mathcal{I})\times  F.$$
Note that if we have a map $a\rightarrow b$, then $B(b\downarrow \mathcal{I})\hookrightarrow B(a\downarrow \mathcal{I})$.  Expicitly the identification of the "copies of $F$" in the part of $B(a\downarrow \mathcal{I})$  coming from $B(b\downarrow \mathcal{I})$ are glued to $B(b\downarrow \mathcal{I})\times  G$ along the map $F\rightarrow G$. We do the same gluing for all the maps in $D$.
After the identification we have the cellular resolution 
$$0\leftarrow S\leftarrow S^8\leftarrow S^{10}\leftarrow S^3\leftarrow 0$$
with the same maps as in Example \ref{hocolimex1}. 
\end{example}

\begin{proposition}
Definition \ref{gluehocolim} and Definition \ref{prodhocolim} are equivalent.
\end{proposition}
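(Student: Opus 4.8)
The plan is to prove the equivalence by transporting both constructions to \textbf{Top} through the forgetful functor $\Psi$, recognising there the two classical models of the homotopy colimit, and then lifting the resulting identification back to \textbf{CellRes} by checking that the monomial labels match. First I would record that both definitions really do land in \textbf{CellRes}: Definition \ref{gluehocolim} does so by Proposition \ref{gluemap} together with Proposition \ref{mapcyl} (each mapping cylinder is a cellular resolution and gluing them preserves this), while Definition \ref{prodhocolim} does so because each nerve $B(i\downarrow\mathcal{I})$ carries only the label $1$, so every $B(i\downarrow\mathcal{I})\times D^i$ is a product with a trivially labelled complex and hence a cellular resolution, and the quotient by $\sim$ is a finite colimit, which remains in \textbf{CellRes} by Proposition \ref{colim}.

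Next I would treat the topological core. Applying $\Psi$, Definition \ref{gluehocolim} becomes the ``gluing in mapping cylinders'' space and Definition \ref{prodhocolim} becomes the coend $\coprod_{i}B(i\downarrow\mathcal{I})\times D^i/\!\sim$, which is exactly the homotopy colimit of Definition \ref{tophocolim}. That these two topological models agree is the classical comparison; the combinatorial input I would make explicit is that the nerve of the under-category $(i\downarrow\mathcal{I})$ contributes one directed $1$-simplex for each non-identity morphism $i\to j$, and that the product of this edge with $D^i$, modulo the gluing $\mathfrak{a}(p,x)\sim\mathfrak{b}(p,x)$, is precisely the mapping cylinder $M^{ij}$ attached along $f^{ij}$. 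Degenerate simplices arising from identity morphisms collapse to points, and the higher simplices of $(i\downarrow\mathcal{I})$ indexed by chains $i\to j\to k$ encode exactly the compatibility of the cylinders under composition, so after collapsing degeneracies the two underlying complexes coincide cell for cell.

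Finally I would match the labels and conclude. In Definition \ref{gluehocolim} a new cell of $M^{ij}$ carries, by the convention of Section \ref{cylinder} and Definition \ref{gluedef}, the least common multiple of the label of a cell $\tau$ of $X^i$ and the label of its image $f^{ij}(\tau)$ in $X^j$. In Definition \ref{prodhocolim} the corresponding product cell is $\sigma\times\tau$ with $\sigma$ the directed edge of $B(i\downarrow\mathcal{I})$ (label $1$), and after the identification $\mathfrak{a}\sim\mathfrak{b}$ its two endpoints carry the labels of $\tau$ and of $f^{ij}(\tau)$, so its label is again this same least common multiple. Thus the cell-for-cell homeomorphism carries labels to labels, the two labelled complexes are equal, and therefore their cellular free complexes are isomorphic with matching differentials (both being the cellular differential of the one labelled complex). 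Since the structure inclusions of each $D^i$ also agree under $\Phi$, the two objects coincide as cocones in \textbf{CellRes}, as already witnessed numerically in Examples \ref{hocolimex1} and \ref{hocolimex2}.

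I expect the main obstacle to be the bookkeeping in the middle step: showing that the relation $\mathfrak{a}\sim\mathfrak{b}$ reproduces the iterated mapping cylinders exactly rather than merely up to homotopy, in particular handling degenerate simplices and higher composites so that no spurious cells survive in one model but not the other. In the worst case, where a literal homeomorphism fails, I would fall back on the fact that $\Psi$ of both constructions are homotopy equivalent relative to the $D^i$ while the labels are determined by the $D^i$, which yields a homotopy equivalence in \textbf{CellRes} in the sense defined earlier — still enough for the homotopy-colimit application.
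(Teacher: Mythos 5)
Your proposal is correct and follows essentially the same route as the paper's proof: both decompose the mapping cylinder of $\mathfrak{f}_{ij}$ as $\Delta_1\times D^i\sqcup\Delta_0\times D^j$ modulo the gluing relation, assemble these pieces over all morphisms out of $i$ into $B(i\downarrow\mathcal{I})\times D^i$, and check that the identifications recover the relation $\mathfrak{a}(p,x)\sim\mathfrak{b}(p,x)$. Your explicit verification that the monomial labels (least common multiples) agree cell for cell is a detail the paper leaves implicit, but it does not change the approach.
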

\begin{proof}
Since both definitions are lifted from {\bf Top}, we know that the underlying cell complexes associated the cellular resolutions are the same. 

Starting with Definition \ref{gluehocolim},
let us consider the case of maps from $D^i$. 
The mapping cylinder for $\mathfrak{f}_{ij}:D^i\rightarrow D^j$ can be thought of as the disjoint union of $\Delta_1\times D^i$ and $\Delta_0\times D^j$ quotient by the relation $(p_1, x)\sim (p, \mathfrak{f}_{ij}(x))$. 
We also need to glue along all the mapping cones containing $D^i$, and if we have a composition then along those as well. 
Writing this as the single mapping cylinder above, we have the disjoint union of $\Delta_1\times D^i$, one for each map from $i$, and a copy of $\Delta_0\times D^j$ for each $j$ $i$ maps to. The quotients are along $(p_1,x)\in \Delta_1\times D^i$ all glued together, $(p_1, x)\sim (p, \mathfrak{f}_{ij}(x))$ for each map, and if we have a map $j\rightarrow k$ then we identify $(p,\mathfrak{f}_{ik}(x))$ with $(p,\mathfrak{f}_{jk}(\mathfrak{f}_{ij}(x)))$. This is the same as taking the product of $D^i$ with simplicial space that records the map structure, i.e. precisely $B(i\downarrow \mathcal{I})$, and gluing the vertices of $B(i\downarrow \mathcal{I})$ corresponding to the map $\mathfrak{f}_{ij}$ with the $\Delta_0$ of $\Delta_0\times D^j$ while identifying $D^i$ and $D^j$ via $\mathfrak{f}_{ij}$.

Taking the above product for all the $D^i$ in the diagram, and gluing them together by the mapping cylinder rules then gives us that we have gluing between  $B(i\downarrow \mathcal{I})\times  D^i$ and $B(j\downarrow \mathcal{I})\times  D^j$ if we have a map $i\rightarrow j\in\mathcal{I}$. Then the identification of the gluing is given by 
$$(p,\mathfrak{f}_{ij}(x))\sim =(\delta_{ji}(p),x)$$
which recovers the relation $\mathfrak{a}(p,x)\sim\mathfrak{b}(p,x)$. 
Therefore it is equivalent to Definition \ref{prodhocolim}, and the two definitions give the same cellular resolution.
Moreover, this shows that Definition \ref{prodhocolim} does give a cellular resolution.\end{proof}

\section{Morse theory and simple homotopy theory}
\label{morsesec}

\subsection{Morse theory for cellular resolutions}
In this section we want to look at Morse theory for cellular resolutions. Recall that the idea of discrete Morse theory is to collapse cells to reach a smaller cell complex with the same homology, and similarly in algebraic morse theory.  In the case of cellular resolutions, Morse theory forms an useful tool "remove" the non-minimal part of the resolution by collapsing it. We illustrate this in the following example.

\begin{example}
\label{exmorse1}
Let $X$ be the filled in triangle with vertex labels $ab,bc,cd$, see Figure \ref{tensorexamplepic} cell complex $F$. The $X$ has a Taylor complex
$$F:0\leftarrow S\leftarrow S^3\leftarrow S^3\leftarrow S\leftarrow 0.$$
Both the face poset of $X$ and the graph $\Gamma_F$, defined in Section \ref{algmorse}, have the directed graph shown in Figure \ref{graph}.

\begin{figure}
\begin{center}
\includegraphics[scale=1]{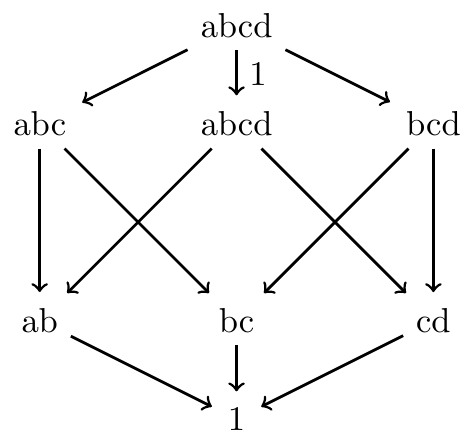}
\end{center}
\caption{Graph of the face poset and $\Gamma_F$ of Example \ref{exmorse1}. Here we have used the monomial labels to denote both the cell with that label and the free module with generator associated to that label. }
\label{graph}
\end{figure}

 Let us take the single edge marked with 1 as the Morse matching.  Then we can compute the chain complex coming from this with algebraic Morse theory
 $$0\leftarrow S\xleftarrow{d_1} S^3\xleftarrow{d_2} S^2\leftarrow 0$$
  with differentials
  $$d_1=\left[ ab\ bc\ cd\right],$$
  $$d_2=\left[ \begin{array}{cc}
  -c&0\\ 
  a&-d\\ 
  0&b\\
  \end{array}\right].$$
 This resolution has the cell complex $G$ of Figure \ref{tensorexamplepic} as its cell complex. This is exactly the same cell complex one gets by doing discrete Morse theory on $X$.
\end{example}

 The first results on the discrete  Morse theory of cellular resolutions were in paper by Batzies and Welker \cite{BW}, and they have been later applied in other works, for example in \cite{MAG} to construct an algorithm for finding cellular resolutions closer to the minimal one.

One of the main concerns when applying Morse theory to cellular resolutions is whether the resulting complex is still a cellular resolution. In a case with restricted collapses this was solved by Batzies and Welker.
\begin{theorem}[\cite{BW}]
\label{bwthm}
Let $X$ be a complex that supports a cellular resolution, and let $M$ be a Morse matching on this complex. If $M$ only matches cells with the same labels, then the Morse complex $\tilde{X}$ also supports a cellular resolution of the same ideal.
\end{theorem}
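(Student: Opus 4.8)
The plan is to reduce the statement to the combination of the main theorem of discrete Morse theory and Sk\"oldberg's algebraic Morse theory from Section \ref{algmorse}, using the acyclicity criterion of Proposition \ref{resmodule} to keep control of the monomial structure. The conceptual point is that the hypothesis ``$M$ matches only equally-labelled cells'' is not an extra convenience but is exactly the condition that makes the topological matching into a legitimate \emph{algebraic} Morse matching on $\mathcal{F}_X$.

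First I would translate the face-poset matching $M$ into a matching on the graph $\Gamma_{\mathcal{F}_X}$. The edges of $\Gamma_{\mathcal{F}_X}$ coincide with those of the face poset $P_X$: there is an edge from the summand indexed by a face $F$ to the summand indexed by a codimension-one face $G\subset F$, and the corresponding component of the differential is multiplication by $\operatorname{sign}(G,F)\,x^{\mathbf{a}_F-\mathbf{a}_G}$. This component is an isomorphism of the rank-one free modules $S(-\mathbf{a}_F)\to S(-\mathbf{a}_G)$ precisely when $\mathbf{a}_F=\mathbf{a}_G$, i.e. when $F$ and $G$ carry the same monomial label. Hence matching equally-labelled cells is exactly the requirement for $M$ to be an algebraic Morse matching, and since the two graphs have identical edges the ``no directed cycles after reversing $M$'' condition transfers verbatim.

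Next I would identify $\tilde{X}$ with the simultaneous output of both theories. The main theorem of discrete Morse theory produces a complex $\tilde{X}$ whose $d$-cells are the critical $d$-cells of $X$, while Sk\"oldberg's theorem produces a homotopy-equivalent complex $\overline{\mathcal{F}_X}$ with $\overline{\mathcal{F}_X}_i=\bigoplus_{K_{i,j}\text{ unmatched}}K_{i,j}$, whose generators are again indexed by the critical cells. Because matched pairs share a label, the matched maps are multiplication by $\pm1$ in degree zero, so the splitting homotopy $\varphi$ and hence the induced differential $\overline{d}=\rho(d-d\varphi d)$ are homogeneous for the $\mathbb{N}^n$-grading; each surviving generator therefore keeps a well-defined monomial label, and assigning these to the cells of $\tilde{X}$ realizes $\overline{\mathcal{F}_X}$ as the cellular free complex of $\tilde{X}$. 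To conclude that this is a \emph{resolution} I would invoke Proposition \ref{resmodule}: the same-label condition forces $M$ to restrict to each subcomplex $X_{\preceq\mathbf{b}}$ (both cells of a matched pair have identical labels, so either both lie in $X_{\preceq\mathbf{b}}$ or neither does), giving a Morse matching whose Morse complex is $\tilde{X}_{\preceq\mathbf{b}}\simeq X_{\preceq\mathbf{b}}$, which is acyclic by hypothesis. Thus $\tilde{X}_{\preceq\mathbf{b}}$ is acyclic for all $\mathbf{b}$, and Proposition \ref{resmodule} gives a cellular resolution; since $H_0$ is preserved under homotopy equivalence, the resolved module $S/I_1\oplus\cdots\oplus S/I_n$, and so the label ideal, is unchanged.

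The main obstacle I expect is geometric rather than algebraic: the discrete Morse theorem only guarantees a CW-complex homotopy equivalent to $X$, so one must verify that $\tilde{X}$ may be taken to be a \emph{regular} CW-complex, so that the cellular free complex and the sign function of Proposition \ref{idk} are available on it, and that the topologically induced boundary maps agree with $\overline{d}$ together with the correct monomial coefficients. Establishing regularity of $\tilde{X}$ and the exact matching of the topological and algebraic differentials is precisely the technical content of the explicit acyclic-matching construction of Batzies and Welker, and for this step I would appeal to their argument in \cite{BW} rather than reproduce it.
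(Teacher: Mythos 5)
The paper does not prove this statement at all: it is imported verbatim from Batzies--Welker and cited as \cite{BW}, so there is no internal proof to compare your argument against. Judged on its own terms, your sketch is a faithful reconstruction of the standard argument. The two load-bearing observations are both present and correct: first, that equality of labels on a matched pair is exactly the condition making the corresponding component of the differential an isomorphism $S(-\mathbf{a}_F)\to S(-\mathbf{a}_G)$ (multiplication by $\pm 1$), so the topological matching is simultaneously an algebraic Morse matching; second, that the same-label condition makes $M$ restrict to every subcomplex $X_{\preceq\mathbf{b}}$, so that acyclicity of each $X_{\preceq\mathbf{b}}$ (the criterion of Proposition \ref{resmodule}) passes to $\tilde{X}_{\preceq\mathbf{b}}$ via the homotopy equivalence of discrete Morse theory. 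Implicit in the second step is the claim that the Morse complex of $X_{\preceq\mathbf{b}}$ under the restricted matching coincides with the subcomplex of $\tilde{X}$ on cells of label $\preceq\mathbf{b}$; this follows because every gradient path out of a critical cell $\sigma$ only visits cells whose labels divide $\mathbf{a}_\sigma$ (descending to a facet can only divide the label, and traversing a matched edge preserves it), and it would strengthen your write-up to say this explicitly rather than leaving it inside the homogeneity remark. You are also right that the genuinely technical content --- that the Morse complex can be realized as a CW-complex of the appropriate kind whose cellular differential agrees with $\overline{d}$ --- is where the real work of \cite{BW} lies; deferring to their construction there is acceptable provided you are clear that this is the part you are not reproving, since otherwise the appeal to \cite{BW} for ``the technical content'' of the very theorem being proved would be circular.
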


Only matching cells with the same label is a strong restriction on what kind of matchings we can make. Algebraically this is the same as only choosing isomorphisms for the matching, which is indeed the case in algebraic Morse theory as we have seen in the Section \ref{algmorse}.

\begin{proposition}
Let $\mathcal{F}$ be a cellular resolution with a cell complex $X$. Let $M$ be a Morse matching on the face poset of $X$ (or the graph $\Gamma_{\mathcal{F}}$). Suppose that $M$  only matches cells with the same labels. Then $M$ is a Morse matching also on $\Gamma_F$ (or on the face poset of $X$). 
Furthermore, $\tilde{\mathcal{F}}$ is the cellular complex of $\tilde{X}$.
\end{proposition}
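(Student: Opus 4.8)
The plan is to reduce the statement to a single identification of directed graphs together with their matchings, and then to compare the two resulting complexes. First I would observe that the face poset diagram $P_X$ and the associated graph $\Gamma_{\mathcal{F}}$ have the \emph{same} underlying directed graph. The vertices of $P_X$ are the cells $F$ of $X$, while those of $\Gamma_{\mathcal{F}}$ are the free summands $S(-\mathbf{a}_F)$ of the modules $\mathcal{F}_i$; these are in evident bijection, one summand per cell. For the edges, the cellular differential is $\partial(F)=\sum_{G\subset F}\operatorname{sign}(G,F)\,x^{\mathbf{a}_F-\mathbf{a}_G}G$, so the component of $\partial$ from $F$ to $G$ is $\operatorname{sign}(G,F)\,x^{\mathbf{a}_F-\mathbf{a}_G}$, which is nonzero exactly when $G$ is a codimension-one face of $F$ (the monomial never vanishes and the sign is $\pm1$ precisely for facets). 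This is the edge relation defining $P_X$, so the two graphs coincide. The condition of having no directed cycle after reversing the edges of $M$ is purely combinatorial and is therefore shared by both. The only extra requirement on the algebraic side is that every matched edge be an isomorphism of $S$-modules; but the edge from $F$ to $G$ is multiplication by $\pm x^{\mathbf{a}_F-\mathbf{a}_G}$, an isomorphism $S(-\mathbf{a}_F)\to S(-\mathbf{a}_G)$ if and only if $\mathbf{a}_F=\mathbf{a}_G$, i.e. exactly when $F$ and $G$ carry the same label. Hence the hypothesis that $M$ matches only equally labelled cells is precisely the isomorphism condition, and $M$ is a Morse matching on $P_X$ (matching equal labels) if and only if it is a Morse matching on $\Gamma_{\mathcal{F}}$, proving the first assertion.

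For the second assertion, under this identification the critical cells of $X$ correspond to the unmatched summands, so $\tilde{\mathcal{F}}$ and the cellular complex $\mathcal{F}_{\tilde X}$ have one free module per critical cell with the inherited label, and by Theorem \ref{bwthm} the complex $\tilde X$ does support a cellular resolution. It remains to compare the differentials. Both the algebraic Morse differential $\overline{d}=\rho(d-d\varphi d)$ and the cellular differential of $\tilde X$ are signed sums over gradient paths $F_0\succ G_0\prec F_1\succ G_1\prec\cdots\prec F_k\succ G_k$ that alternate down-edges of $\partial$ with reversed matched edges, ending at a critical cell $G_k$. The crux is that the monomial carried by such a path telescopes: each reversed matched edge $G_i\to F_{i+1}$ contributes the inverse of a degree-zero (hence $\pm1$) map because $\mathbf{a}_{G_i}=\mathbf{a}_{F_{i+1}}$, while the down-edges contribute $x^{\mathbf{a}_{F_i}-\mathbf{a}_{G_i}}$, and substituting $\mathbf{a}_{G_i}=\mathbf{a}_{F_{i+1}}$ the product collapses to $x^{\mathbf{a}_{F_0}-\mathbf{a}_{G_k}}$. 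Thus every gradient path between two fixed critical cells $F$ and $G'$ contributes the same monomial $x^{\mathbf{a}_F-\mathbf{a}_{G'}}$ times a sign, and the resulting signed count is exactly the incidence number of $\tilde X$. Therefore $\tilde{\mathcal{F}}=\mathcal{F}_{\tilde X}$.

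The step I expect to be the main obstacle is the differential comparison: one must verify that the algebraic Morse formula of Proposition \ref{algmorsemap}, expanded recursively through $\varphi$, yields precisely the same signed sum over gradient paths as the Forman-style incidence numbers used to build $\tilde X$, \emph{with signs in agreement}. The monomial telescoping above is what reconciles the labels, so once the unlabelled signed path counts are matched---this is the standard equivalence between discrete and algebraic Morse theory---the labelled statement follows by multiplying each path through by its (path-independent) monomial $x^{\mathbf{a}_F-\mathbf{a}_{G'}}$.
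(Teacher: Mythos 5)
Your proposal is correct, and for the first assertion it follows essentially the same route as the paper: identify the underlying directed graphs of $P_X$ and $\Gamma_{\mathcal{F}}$, note that acyclicity of the reversed matching is purely combinatorial, and observe that the isomorphism condition on matched edges is exactly the equal-label condition. Your version is in fact slightly sharper here: you point out that the matched edge is multiplication by $\pm x^{\mathbf{a}_F-\mathbf{a}_G}$ and is invertible precisely when $\mathbf{a}_F=\mathbf{a}_G$, whereas the paper argues only that the two summands are ``the same free module,'' which by itself would not force the connecting map to be an isomorphism. For the second assertion the two arguments genuinely diverge. The paper compares the poset diagrams of $\tilde{\mathcal{F}}$ and $\tilde X$, notes that the surviving vertices coincide because the same matching was applied on both sides, and then invokes the Batzies--Welker theorem to conclude that $\tilde X$ supports a resolution; the agreement of the differentials is not addressed beyond this. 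You instead expand the algebraic Morse differential $\overline{d}=\rho(d-d\varphi d)$ as a signed sum over gradient paths and show that the monomial carried by each path telescopes to $x^{\mathbf{a}_F-\mathbf{a}_{G'}}$, matching the cellular differential of $\tilde X$ term by term. Your route buys an actual verification that the two complexes have the same maps, at the cost of leaning on the (standard but nontrivial) fact that both the discrete and the algebraic Morse differentials are computed by the same signed gradient-path count; you are right to flag that sign comparison as the one step that needs the most care, and it is reasonable to cite it rather than reprove it. Either way the statement is established, but your differential comparison closes a gap that the paper's poset argument leaves implicit.
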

\begin{proof}
Note that the face poset $P_X$ of $X$ and the graph $\Gamma_{\mathcal{F}}$ of $\mathcal{F}$ are the same directed graph up to the labels on the vertices. The vertex in $P_X$ corresponding to cell $x$ is the same vertex in $\Gamma_{\mathcal{F}}$ with the free module having generator corresponding to $x$.

Let $M$ be a Morse matching on $P_X$, and suppose it only matches cells with the same labels. 
We can consider the matching $M$ on $\Gamma_{\mathcal{F}}$. Since the underlying graph is the same as in $P_X$, $M$ is a  matching on $\Gamma_{\mathcal{F}}$ without any directed cycles when the edges in $M$ are reversed. 
The vertices in $\Gamma_{\mathcal{F}}$ come from the summands at each homological degree, so by definition a vertex is $S(-{\bf a}_z)$ where ${\bf a}_z$ is the fine graded degree of the label of some $z\in X$.  By the same label condition for edges in $M$, we have that if the edge $E\in M$ matches $S(-{\bf a}_x)$ and $S(-{\bf a}_y)$, then the cells $x$ and $y$ have the same label. This gives that ${\bf a}_x={\bf a}_y$.
Then the corresponding map in $\mathcal{F}$ between $S(-{\bf a}_x)$ and $S(-{\bf a}_y)$ is an isomorphism, as they are the same free module. It follows that $M$ is a Morse matching on $\Gamma_{\mathcal{F}}$ as well.

On the other had, if $M$ is a Morse matching on $\Gamma_{\mathcal{F}}$, we again get directly that it is matching on $P_X$ without directed cycles of the edges in $M$ are reversed. $M$ also only matches free modules that are isomorphic, and we know that if $S(-{\bf a}_x)\cong S(-{\bf a}_y)$ then $x$ and $y$ have the same label. So the corresponding vertices in $P_X$ of a matched edge in $M$ are cells with the same label. Thus $M$ is a Morse matching on $P_X$ that only matches cells with the same labels.

We have established that the algebraic and same label cell matchings are the same. Let $\tilde{\mathcal{F}}$ be the homotopic chain complex of $\mathcal{F}$ after the Morse map and let $\tilde{X}$ be the cell complex after collapses on $X$.  One way to see that $\tilde{\mathcal{F}}$ is the cellular complex of $\tilde{X}$ is to consider their poset diagrams. Since they come from the posets of $\mathcal{F}$ and $X$, we know that the vertices correspond to each other, and the remaining vertices have not changed. As the same Morse matching $M$ is applied to both $\mathcal{F}$ and $X$, the resulting posets are the same up to vertex labels. Lastly we know from Theorem \ref{bwthm} that $\tilde{X}$ supports a resolution, and as $\tilde{\mathcal{F}}$ is the cellular complex of $\tilde{X}$, we have that $\tilde{\mathcal{F}}$ is a cellular resolution.
\end{proof}

\begin{theorem}

Let $\mathcal{F}$ be a cellular resolution with a cell complex $X$, and let $M$ be a Morse matching on them. 
Let ${\bf f}$ be the chain map from $\mathcal{F}$ to $\tilde{\mathcal{F}}$, and let $f$ be the cellular strong deformation retract of $X$ coming from Morse theory.
Then the pair $({\bf f},f)$  formed of the Morse maps is a morphisms in {\bf CellRes}.
\end{theorem}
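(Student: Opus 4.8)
The plan is to verify directly that the pair $({\bf f},f)$ meets the two compatibility conditions in the definition of a morphism of {\bf CellRes}, exploiting the fact that both maps are manufactured from the single Morse matching $M$. The essential structural input, already secured by the preceding proposition, is that the face poset $P_X$ and the graph $\Gamma_{\mathcal{F}}$ are literally the same directed graph (only the vertex decorations differ), and that under the same-label hypothesis $M$ determines the same matching, hence the same acyclic gradient vector field, on both. Consequently the algebraic gradient paths in $\Gamma_{\mathcal{F}}^M$ used to build the Morse projection ${\bf f}$ and the topological gradient flow used to build the strong deformation retract $f$ run along identical sequences of cells, and the whole argument reduces to showing that two reachability conditions on this common graph coincide.

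First I would dispose of the degree-zero condition $f_0(x)=\varphi_f(x)$ for all $x\in I$. By Theorem \ref{bwthm} the collapsed complex $\tilde X$ supports a cellular resolution of the \emph{same} ideal, so the augmentation to $S/I$ is preserved and $f_0$ is the identity on $\mathcal{F}_0$. Since the vertex labels are unchanged by the matching (vertices are matched only with equally labelled cells, and in practice carry the minimal labels), the induced label map $\varphi_f$ is likewise the identity on $I$, so the two agree.

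Next comes the higher-degree support condition, which is the heart of the argument, and here I would invoke the explicit gradient-path description of both maps. On the algebraic side, Proposition \ref{algmorsemap} supplies the splitting homotopy $\varphi$ and hence $\pi=\operatorname{id}-(d\varphi+\varphi d)$ (equivalently the projection onto $\overline{K}$); expanding this recursion shows that the coefficient of a critical generator $e_y$ in ${\bf f}(e_x)$ is a signed sum over the gradient paths from $x$ to $y$, so $e_y$ occurs with nonzero coefficient exactly when such a path exists. On the topological side, the strong deformation retract $f$ pushes the cell $x$ along the same acyclic matching until only critical cells remain, so that $f(x)$ is supported on precisely those critical cells reachable from $x$ by a gradient path. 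Because the underlying graph and matching coincide, these two reachability conditions are the same, which yields the required equivalence: $f_i$ sends $e_x$ to a linear combination of $e_{y_1},\ldots,e_{y_r}$ if and only if $f$ sends $x$ to the union $y_1,\ldots,y_r$.

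The main obstacle is exactly this last identification, namely making the combinatorics of the two gradient descriptions line up rather than merely asserting that both ``follow the matching''. Concretely, one must check that no critical cell appears in the support of ${\bf f}(e_x)$ while being absent from $f(x)$, and conversely; this amounts to confirming that the algebraic coefficient attached to a gradient path is nonzero precisely when that path contributes a cell to the topological image. The same-label hypothesis is what rescues this step, since it forces the matched differentials to be isomorphisms and so prevents any spurious algebraic cancellation that would annihilate a whole path's contribution without a corresponding topological collapse. Once the two support sets are shown to coincide, compatibility of $({\bf f},f)$ follows, and as ${\bf f}$ is already a chain map and $f$ is already cellular, the pair is a morphism of {\bf CellRes}.
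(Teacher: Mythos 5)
Your route is genuinely different from the paper's: you keep the whole Morse matching and try to match the gradient-path expansion of the algebraic Morse projection $\pi=\operatorname{id}-(d\varphi+\varphi d)$ against the gradient flow defining the topological retract. The paper instead first reduces to a Morse matching consisting of a \emph{single} edge (an elementary collapse), treats the general case as a composite of such collapses, and then verifies compatibility by a direct computation: unmatched cells are fixed by both maps, $f_{i+1}(e_{i+1})=0$ is forced by the commutative squares, and $f_i(e_i)=a_1y_1+\dots+a_ry_r$ is read off from $d\circ f_i=f_{i-1}\circ d=\operatorname{id}\circ d$, with the $y_j$ exactly the other codimension-one faces of the matched $(i+1)$-cell --- which is also where the deformation retract sends $x_i$. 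That reduction is precisely what lets the paper avoid any global combinatorics of gradient paths.

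The gap in your argument sits exactly at the step you flag as the main obstacle, and your proposed rescue does not work. You claim that the same-label hypothesis ``prevents any spurious algebraic cancellation'' among gradient paths. The hypothesis only guarantees that each \emph{matched} edge of $\Gamma_{\mathcal{F}}$ carries an isomorphism (a unit coefficient); it says nothing about the interaction of \emph{distinct} gradient paths from a cell $x$ to a critical cell $y$. Two such paths contribute terms of the form $\pm m_1$ and $\pm m_2$ with $m_1,m_2$ monomials, and nothing in the same-label condition forbids $m_1=m_2$ with opposite signs --- this is the standard phenomenon that makes Morse differentials between path-connected critical cells vanish. So your asserted equivalence ``$e_y$ occurs with nonzero coefficient in ${\bf f}(e_x)$ iff a gradient path from $x$ to $y$ exists'' is unjustified in the forward direction, and with it the identification of the algebraic support with the topological image. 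To repair the argument you would either have to rule out such cancellations in this setting (which would need a real argument), or argue that a vanishing coefficient is harmless for the paper's notion of compatibility (a linear combination is allowed to have zero coefficients) --- or simply adopt the paper's reduction to a single collapse, where there is only one gradient path and the question never arises. Separately, your degree-zero step is too quick: when a vertex $v$ is matched (with an edge to a vertex $w$ of dividing label), $\varphi_f$ sends $m_v$ to $m_w$ rather than fixing it, so $f_0$ is not literally the identity on $I$; the ideal is preserved but the generator map must still be checked against the retract.
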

\begin{proof}
A single edge in the Morse matching corresponds to a collapse, and we can do these collapses one by one. Thus we may assume that the Morse matching in this case is a single edge.
We want to show that {\bf f} and $f$ are compatible, so that they form a morphism in {\bf CellRes}. 

Let $M$ be a Morse matching with a single edge that generates the maps. Let us denote by $e_i$ and $e_{i+1}$ the generators of the vertices in $\Gamma_{\mathcal{F}}$, and with $x_i$ and $x_j$ the cells corresponding to the vertices of $P_X$.
For any unmatched vertices, in both $\Gamma_{\mathcal{F}}$ and $P_X$, the maps {\bf f} and $f$ act on these vertices as an identity. It follows from the definition of the identity that the maps satisfy the compatibility for those parts. In particular, the chain map ${\bf f}$ has $f_k=\operatorname{id}$ for any $k<i$, and cells of lower dimension than $i$ map by identity.

Let us focus on the maps $f_i$, $f_{i+1}$ and $f$ on cells of dimension $i$ and $i+1$. 
We will first show that $f_i(e_i)$ is a linear combination of other generators of the modules $e_{i+1}$ maps to.
Since we have explicit description of the resolution $\tilde{\mathcal{F}}$, we have that $$f_i: \bigoplus_{\substack{x\in X\\ dim x= i-1}} S(-a_x)\rightarrow \bigoplus_{\substack{x\notin M\\ dim x=i-1}} S(-a_x).$$
As noted before, $f_i$ is identity on $\bigoplus_{x\notin M} S(-a_x)\subset\bigoplus_{x\in X} S(-a_x)$. So we only need to compute $f_i(e_i)$.  From the definition of the chain map we have that $d\circ f_i(e_i)=f_{i-1}\circ d(e_i)=id\circ d(e_i)$.  Let $y_1,\ldots, y_r$ be the generators of $ S(-a_x)$ where $x\notin M$ and there is an edge from $x_{i+1}$ to $x$.  From the differentials and their composition we have that $d(e_i)=d(a_1y_1+\ldots+a_ry_r)$ for some $a_1,\ldots,a_r\in S$. Combining this with the commutativity, we get that $f_i(e_i)=a_1y_1+\ldots+a_ry_r$. 
Similarly, we can show that $f_{i+1}(e_{i+1})=0$ using the properties of differentials and the commutative squares.

On the topological side, $f$ deformation retracts $x_{i+1}$ and $x_i$ to $\tilde{X}$. So they both map to the intersection of boundary of $x_{i+1}$ and $\tilde{X}$. 
Then $x_{i+1}$ will map to a smaller dimensional cells, so a compatible chain map maps the generator associated to $x_{i+1}$ to 0 in degree $i+1$, which we have with $f_{i+1}$.
These conditions also give that $x_i$ either maps to cells of the same dimension or to lower ones. In the former case, we get that these are the cells corresponding to the algebraic generators $y_1,\ldots, y_r$, and we have the compatibility that we want.
In case $f(x_i)=0$, we then know there are no other $i$-cells on the boundary of $x_{i+1}$. This also implies that $e_{i+1}$ only maps to one element, $e_i$. Thus we get that $f_i(e_i)=0$, so the two maps are compatible.
\end{proof}

\subsection{Simple homotopy theory}
Having the morse maps as morphisms in {\bf CellRes} allows us to define simple homotopy for cellular resolutions. For classical simple homotopy theory the reader may look up the book by Cohen \cite{cohen}.

In the language of simple homotopy theory, a Morse matching with a single edge is an \emph{elementary collapse}.  We have also the elementary expansion.

\begin{definition}
Let $\mathcal{F}$ and $\mathcal{G}$ be cellular resolutions. Suppose that there is an elementary collapse from $\mathcal{F}$ to $\mathcal{G}$. Then we say that $\mathcal{G}$ \emph{expands to $\mathcal{F}$ by an elementary expansion}, or that there is an \emph{elementary expansion} from $\mathcal{G}$ to $\mathcal{F}$.
A finite sequence of elementary collapses and elementary expansions is called a \emph{formal deformation}.
\end{definition}

Now we can define the simple homotopy equivalence for cellular resolutions.
\begin{definition}
Let $\mathcal{F}$ and $\mathcal{G}$ be cellular resolutions. Let $\mathfrak{f}=({\bf f},f):\mathcal{F}\rightarrow \mathcal{G}$ be a formal deformation. Then we get that $\mathcal{F}$ and $\mathcal{G}$ are simple homotopy equivalent.
\end{definition}


\begin{thebibliography}{10} 

\bibitem{BW}
Ekkehard Batzies and Volkmar Welker. 
Discrete Morse theory for cellular resolutions. 
\emph{J. Reine Angew. Math.} {\bf 543} (2002), 147-168.

\bibitem{bps}
Dave Bayer, Irena Peeva, and Bernd Sturmfels.
Monomial resolutions.
\emph{Math. Res. Lett.} {\bf 5} (1998), no. 1-2, 31-46.

\bibitem{def}
Dave Bayer and Bernd Sturmfels.
Cellular Resolutions of Monomial Modules.
\emph{J. Reine Angew. Math.} {\bf 502} (1998), 123-140.


\bibitem{Bor1}
Francis Borceux.
\emph{Handbook of categorical algebra 1}.
Encyclopedia of Mathematics and its Applications, 50. Cambridge University Press, Cambridge, 1994. xv+345 pp. 

\bibitem{Bor2}
Francis Borceux.
\emph{Handbook of categorical algebra 2}.
Encyclopedia of Mathematics and its Applications, 51. Cambridge University Press, Cambridge, 1994. xvii+443 pp. 

\bibitem{cohen}
Marshall M Cohen.
\emph{A course in simple-homotopy theory. }
Graduate Texts in Mathematics, Vol. 10. Springer-Verlag, New York-Berlin, 1973. x+144 pp.

\bibitem{cf}
George E. Cook and Ross L. Finney.
\emph{Homology of cell complexes}.
Princeton University Press, Princeton, 1967.

\bibitem{DM}
Anton Dochtermann and Fatemeh Mohammadi.
Cellular resolutions from mapping cones. 
\emph{J. Combin. Theory Ser. A} {\bf 128} (2014), 180–206.

\bibitem{E10}
Alexander Engstr\"om and Patrik Nor\'{e}n.
Cellular resolutions of powers of monomial ideals.
arXiv:1212.2146


\bibitem{forman}
Robin Forman. 
A user's guide to discrete Morse theory. 
\emph{Sém. Lothar. Combin.} {\bf 48} (2002), Art. B48c, 35 pp.

\bibitem{HT}
J\"urgen Herzog and Yukihide Takayama.
Resolutions by mapping cones. (English summary) 
The Roos Festschrift volume, 2. 
\emph{Homology Homotopy Appl.} {\bf 4} (2002), no. 2, part 2, 277-294. 

\bibitem{jol}
Michael J\"ollenbeck and Wolkmar Welker.
Minimal resolutions via algebraic discrete Morse theory.
\emph{Mem. Amer. Math. Soc.} {\bf 197} (2009), no. 923, vi+74 pp.


\bibitem{MAG}
Josep Àlvarez Montaner, Oscar Fernández-Ramos, and Philippe Gimenez.
Pruned cellular free resolutions of monomial ideals.
arXiv:1701.01134

\bibitem{ml}
Saunders Mac Lane.
\emph{Categories for the Working Mathematician, second edition.}
Graduate Texts in Mathematics, 5. Springer-Verlag, New York, 1997. 314pp.

\bibitem{sth}
William Massey.
\emph{Singular homology theory}.
Graduate Texts in Mathematics, 70. Springer-Verlag, New York-Berlin, 1980. xii+265 pp. 

\bibitem{cca}
Ezra Miller and Bernd Sturmfels.
\emph{Combinatorial Commutative Algebra.}
Graduate Texts in Mathematics, 227. Springer-Verlag, New York, 2005. 417 pp.


\bibitem{SS}
Steven Sam and Andrew Snowden.
Gr\"obner methods for representations of combinatorial categories. 
\emph{J. Amer. Math. Soc.} {\bf 30}(2017), no. 1, 159-203.

\bibitem{Schubert}
Horst Schubert.
\emph{Topology.} 
Allyn and Bacon, Inc., Boston, 1968. 358 pp.


\bibitem{ES05}
Emil Sk\"oldberg.
Morse theory from an algebraic point of view.
\emph{Trans. Am. Math. Soc.} {\bf 358} (2005), no. 1, 115--129.



\bibitem{vel}
Mauricio Velasco.
Minimal free resolutions that are not supported by a CW-complex. 
\emph{J. Algebra} {\bf 319} (2008), no. 1, 102–114.


\bibitem{WZZ}
Volkmar Welker, Gunter Ziegler and Rade Zivaljevic.
Homotopy colimits-comparison lemmas for combinatorial applications.
\emph{J. Reine Angew. Math.} {\bf 509} (1999), 117–149.

\bibitem{Weibel}
Charles A. Weibel.
\emph{An Introduction to Homological Algebra.}
Cambridge studies in advanced mathematics, 38. Cambridge University Press, Cambridge, 1994. 450pp.




\end{thebibliography}
\end{document}